\newtheorem{thm}{Theorem}[section]
\newtheorem{lem}[thm]{Lemma}
\newtheorem{cor}[thm]{Corollary}
\newtheorem{pro}[thm]{Proposition}
\newtheorem{ex}[thm]{Example}
\newtheorem{rmk}[thm]{Remark}
\newtheorem{defi}[thm]{Definition}
\newcommand {\emptycomment}[1]{}
\newcommand{\be }{\begin{equation}}
\newcommand{\ee }{\end{equation}}
\newcommand{\huaL}{\mathcal{L}}
\newcommand{\huaR}{\mathcal{R}}
\newcommand{\huaP}{\mathcal{P}}
\newcommand{\huaJ}{\mathcal{J}}
\newcommand{\frkd}{\mathfrak d}
\newcommand{\frkr}{\mathfrak r}
\newcommand{\Id}{{\rm{Id}}}
\newcommand{\br}[1]{   [ \cdot,    \cdot  ]   }
\newcommand{\gl}{\mathfrak {gl}}
\newcommand{\Img}{\mathrm{Im}}
\begin{document}

\title{Quasi-triangular and factorizable dendriform D-bialgebras}

\author{You Wang}
\address{Department of Mathematics, Jilin University, Changchun 130012, Jilin, China}
\email{wangyou20@mails.jlu.edu.cn}

\vspace{-5mm}

%\date{\today}

\begin{abstract}
In this paper, we introduce the notions of quasi-triangular and factorizable dendriform D-bialgebras. A factorizable dendriform D-bialgebra leads to a factorization of the underlying dendriform algebra. We show that the dendriform double of a dendriform D-bialgebra naturally enjoys a factorizable dendriform D-bialgebra structure. Moreover, we introduce the notion of relative Rota-Baxter operators of nonzero weights on dendriform algebras and find that every quasi-triangular dendriform D-bialgebra can give rise to a relative Rota-Baxter operator of weight $1$. Then we introduce the notion of quadratic Rota-Baxter dendriform algebras as the Rota-Baxter characterization of factorizable dendriform D-bialgebras, and show that there is a one-to-one correspondence between factorizable dendriform D-bialgebras and quadratic Rota-Baxter dendriform algebras. Finally, we show that a quadratic Rota-Baxter dendriform algebra can give rise to an isomorphism from the regular representation to the coregular representation of a Rota-Baxter dendriform algebra.

\end{abstract}

%\subjclass[2010]{17B10, 17B56, 17A42}

\keywords{quasi-triangular dendriform D-bialgebras, factorizable dendriform D-bialgebras, quadratic Rota-Baxter dendriform algebras, Rota-Baxter dendriform D-bialgebras}

\maketitle

\tableofcontents

\allowdisplaybreaks

%\end{document}

\section{Introduction}

%\vspace{2mm}

In this paper, we intend to establish the quasi-triangular and factorizable theories for dendriform D-bialgebras. Meanwhile, we also establish connections between (relative) Rota-Baxter operators and dendriform D-bialgebras.

Dendriform algebras were introduced by Loday \cite{Loday1} with motivation from algebraic K-theory, in order to establish relations with several areas in mathematics and physics, including homology \cite{Fra1,Fra2}, Hopf algebras \cite{Holtkamp,LR2,Ronco}, Lie and Leibniz algebras \cite{Fra2}, Rota-Baxter systems \cite{Brz}, combinatorics \cite{LR}, arithmetic \cite{Loday2} and so on. In \cite{BGN2012}, the notion of $\mathcal{O}-$operator was introduced (such a structure appeared independently in \cite{Uchino08} named after the generalized Rota-Baxter operator, also as the name of relative Rota-Baxter operator in \cite{LST}) and an $\mathcal{O}-$operator associated to a bimodule of an associative algebra can give rise to a dendriform algebra.

The notions of dendriform bialgebras can be traced back to the earlier researches \cite{Aguiar3,LR,LR2,Ronco}. There are three algebraic structures closely related to dendriform bialgebras, which are dendriform Hopf algebras, bidendriform bialgebras and dendriform D-bialgebras.
In \cite{Ronco}, the author proves a Milnor-Moore style theorem for dendriform Hopf algebras. It is shown that a dendriform Hopf algebra is isomorphic to the enveloping algebra of its brace algebra of primitive elements, used by the construction of Eulerian idempotents. In \cite{Foissy}, the notion of bidendriform bialgebras was introduced as a special dendriform bialgebras. For example, the Malvenuto-Reutenauer Hopf algebra and the non-commutative Connes-Kreimer Hopf algebras of planar decorated rooted trees are bidendriform bialgebras. Then the notion of dendriform D-bialgebras \cite{Bai} was introduced by Bai in order to establish connections with the double constructions of Connes cocycles and matched pairs of associative algebras. All these bialgebras are dendriform algebras equipped with coassociative operations satisfying some different compatibility conditions. However, it is obvious that dendriform D-bialgebras are quite different from the formers according to the definitions.

In the context of Lie bialgebras, quasi-triangular Lie bialgebras play important roles in mathematical physics \cite{Drinfeld,STS}. Moreover, as an important class of quasi-triangular Lie bialgebras, factorizable Lie bialgebras can be used to connect classical r-matrices with certain factorization problems and have widespread applications in integrable systems \cite{BGN2010,RS,S2}.

Note that even though dendriform bialgebras were widely studied recently and the coboundary dendriform D-bialgebras was given by Bai in \cite{Bai}, while the corresponding quasi-triangular, and in particular factorizable theories, are yet to be found. In this paper, we introduce an $(L_\succ,R_\prec)$ condition of a $2$-tensor, in order to define quasi-triangular dendriform D-bialgebras successfully. In particular, if the skew-symmetric part of the dendriform r-matrix (or a solution of the D-equation equivalently) is nondegenerate, then we obtain a factorizable dendriform D-bialgebra. We show that a factorizable dendriform D-bialgebra leads to a factorization of the underlying dendriform algebra. Furthermore, the dendriform double of a dendriform D-bialgebra has a natural factorizable dendriform D-bialgebra structure as an example.

There is a close relationship among Lie algebras, associative algebras, pre-Lie algebras and dendriform algebras as follows (in the sense of commutative diagram of categories). See \cite{Aguiar1,Aguiar3,Fard} for more details.
\[
\xymatrix{
  &{\rm dendriform~algebras}~(A,\prec,\succ) \ar[d]_{x\ast y=x\prec y+x\succ y} \ar[r]^{x\star y=x\succ y-y\prec x}  & {\rm pre-Lie~algebras}~(A,\star) \ar[d]^{[x,y]=x\star y-y\star x}   \\
  & {\rm associative~algebras}~(A,\ast)  \ar[r]^{[x,y]=x\ast y-y\ast x}  &  {\rm Lie~algebras}~ (A,[\cdot,\cdot])      }
\]
Bai extends the above relationships at the level of bialgebras with dualities in a commutative diagram in \cite{Bai}. In this paper, we intend to add Rota-Baxter operators and certain nondegenerate bilinear forms to among the four algebras, to get the following commutative diagram.
\[
\xymatrix{
  &{\rm quadratic~RB~dendriform~algebras} \ar[d]_{} \ar[r]^{}  & {\rm quadratic~RB~pre-Lie~algebras} \ar[d]^{}   \\
  & {\rm RB~associative~algebras~with~Connes~cocycles} \ar[u]_{} \ar[r]^{}  &  {\rm symplectic~RB~Lie~algebras}  \ar[u]_{}    }
\]
More interestingly, the left vertical arrows implies that there is a one-to-one correspondence between quadratic Rota-Baxter dendriform algebras and Rota-Baxter associative algebras with nondegenerate Connes cocycles. Recently, it was shown that factorizable Lie bialgebras can be characterized by quadratic Rota-Baxter Lie algebras of nonzero weights \cite{Lang}. Fortunately, we show that there is a one-to-one correspondence between factorizable dendriform D-bialgebras and quadratic Rota-Baxter dendriform algebras, which implies that quadratic Rota-Baxter dendriform algebras are the Rota-Baxter characterization of factorizable dendriform D-bialgebras.

The paper is organized as follows. In Section \ref{quasi-fac}, we introduce the notion of quasi-triangular dendriform D-bialgebras as a special class of coboundary dendriform D-bialgebras. Then we introduce the notion of a factorizable dendriform D-bialgebra, which is a special quasi-triangular dendriform D-bialgebra. We show that a factorizable dendriform D-bialgebra leads to a factorization of the underlying dendriform algebra. We also show that the dendriform double of an arbitrary dendriform D-bialgebra naturally has a factorizable dendriform D-bialgebra structure. In Section \ref{nonzero-rRBo}, we introduce the notion of relative Rota-Baxter operators of nonzero weights on dendriform algebras with respect to the coregular representation and find that  quasi-triangular dendriform D-bialgebras can give rise to two classes of relative Rota-Baxter operator of weight $1$, denoted by $r_+$ and $r_-$ respectively. In Section \ref{QRBD}, we give the Rota-Baxter characterization of factorizable dendriform D-bialgebras. We introduce the notion of quadratic Rota-Baxter dendriform algebras and show that there is a one-to-one correspondence between quadratic Rota-Baxter dendriform algebras and Rota-Baxter associative algebras with nondegenerate Connes cocycles. Moreover, we also show that there is a one-to-one correspondence between factorizable dendriform D-bialgebras and quadratic Rota-Baxter dendriform algebras. At the end of this section, we also show that a quadratic Rota-Baxter dendriform algebra can give rise to an isomorphism from the regular representation to the coregular representation of a Rota-Baxter dendriform algebra.

In this paper, we work over a field $\mathbb{K}$ and all the vector spaces and algebras are over $\mathbb{K}$ and finite-dimensional.

\vspace{2mm}
\noindent
{\bf Acknowledgements.} We give our warmest thanks to Yunhe Sheng and Chengming Bai for very useful discussions.

\newpage

\section{Quasi-triangular dendriform D-bialgebras and factorizable dendriform D-bialgebras}\label{quasi-fac}

In this section, we introduce the notions of quasi-triangular dendriform D-bialgebras and factorizable dendriform D-bialgebras. We show that the double of a dendriform D-bialgebra is naturally a factorizable dendriform D-bialgebra.

We first recall the definition of dendriform algebras.
\begin{defi}
Let A be a vector space with two bilinear products denoted by $\prec_A$ and $\succ_A$. $(A,\prec_A,\succ_A)$ is called a {\bf dendriform algebra} if for any $x,y,z\in A,$
\begin{eqnarray*}\label{dendriform}
(x\prec_A y)\prec_A z&=&x\prec_A(y\prec_A z+y\succ_A z),\\
(x\succ_A y)\prec_A z&=&x\succ_A(y\prec_A z),\\
x\succ_A (y \succ_A z)&=&(x\prec_A y+x\succ_A y)\succ_A z.
\end{eqnarray*}
\end{defi}

Let $(A,\prec_A,\succ_A)$ be a dendriform algebra. Then the new multiplication $\ast$ given by
\begin{eqnarray}\label{ast}
x \ast y=x\prec_A y+x\succ_A y, \quad \forall x,y\in A,
\end{eqnarray}
defines an associative algebra $(A,\ast)$, which is called the sub-adjacent associative algebra of $(A,\prec_A,\succ_A)$. Furthermore, $(A;L_{\succ_A},R_{\prec_A})$ gives a representation of the associative algebra $(A,\ast)$, where $L_{\succ_A},R_{\prec_A}:A\longrightarrow \gl(A)$ are defined by $L_{\succ_A}(x)y=x \succ_A y, R_{\prec_A}(x)y=y\prec_A x$ respectively.

\begin{defi}{\rm(\cite{Aguiar3})}
A representation of a dendriform algebra $(A,\prec_A,\succ_A)$ is a quintuple $(V;l_\succ,r_\succ,$ $l_\prec,r_\prec)$, where $V$ is a vector space and $l_\succ,r_\succ,l_\prec,r_\prec:A\longrightarrow \gl(V)$ are four linear maps satisfying the following equations:
\begin{equation*}
\left\{ \begin{aligned}
l_\prec(x\prec_A y)&=l_\prec(x)l_\ast(y);&\quad r_\prec(x)l_\prec(y)&=l_\prec(y)r_\ast(x);&\quad r_\prec(x)r_\prec(y)&=r_\prec(y\ast x);\\
l_\prec(x\succ_A y)&=l_\succ(x)l_\prec(y);&\quad r_\prec(x)l_\succ(y)&=l_\succ(y)r_\prec(x);&\quad r_\prec(x)r_\succ(y)&=r_\succ(y\prec_A x);\\
l_\succ(x\ast y)&=l_\succ(x)l_\succ(y);&\quad r_\succ(x)l_\ast(y)&=l_\succ(y)r_\succ(x);&\quad r_\succ(x)r_\ast(y)&=r_\succ(y\succ_A x),
\end{aligned}\right.
\end{equation*}
where $x \ast y=x\prec_A y+x\succ_A y,~ l_{\ast}=l_{\prec}+l_{\succ},~ r_{\ast}=r_{\prec}+r_{\succ}.$
\end{defi}

In fact, $(V;l_\succ,r_\succ,l_\prec,r_\prec)$ is a representation of a dendriform algebra $(A,\prec_A,\succ_A)$ if and only if the direct sum $A\oplus V$ of vector spaces is a dendriform algebra
(the semi-direct product) where the multiplications $\succ_{A\oplus V},\prec_{A\oplus V}$ on $A\oplus V$ are defined by
\begin{eqnarray*}
(x+u)\succ_{A\oplus V}(y+v)&=&x\succ_A y+l_\succ(x)v+r_\succ(y)u,\\
(x+u)\prec_{A\oplus V}(y+v)&=&x\prec_A y+l_\prec(x)v+r_\prec(y)u,
\end{eqnarray*}
for all $x,y\in A,u,v\in V.$ We denote it by $A \ltimes_{l_\succ,r_\succ,l_\prec,r_\prec} V$ or simply by $A \ltimes V$.

\begin{ex}\label{re-core-rep}
Let $(A,\prec_A,\succ_A)$ be a dendriform algebra. Then $(A;L_{\succ_A},R_{\succ_A},L_{\prec_A},R_{\prec_A})$ is a representation of $(A,\prec_A,\succ_A)$, which is called the {\bf regular representation}, where $R_{\succ_A},L_{\prec_A}:A\longrightarrow \gl(A)$ are defined by $R_{\succ_A}(x)y=y\succ_A x,L_{\prec_A}(x)y=x \prec_A y$ respectively for all $x,y\in A$. Define four linear maps $L_{\succ_A}^*,R_{\succ_A}^*,L_{\prec_A}^*,R_{\prec_A}^*:A\longrightarrow \gl(A^*)$ respectively by
\begin{eqnarray*}
&&\langle L_{\succ_A}^*(x)\xi,y  \rangle=\langle \xi,L_{\succ_A}(x)y \rangle, \quad \langle R_{\succ_A}^*(x)\xi,y  \rangle=\langle \xi,R_{\succ_A}(x)y \rangle,\\
&&\langle L_{\prec_A}^*(x)\xi,y  \rangle=\langle \xi,L_{\prec_A}(x)y \rangle, \quad \langle R_{\prec_A}^*(x)\xi,y  \rangle=\langle \xi,R_{\prec_A}(x)y \rangle,
\end{eqnarray*}
for all $x,y\in A, \xi\in A^*$. Then $(A^*;R_{\succ_A}^*+R_{\prec_A}^*,-L_{\prec_A}^*,-R_{\succ_A}^*,L_{\succ_A}^*+L_{\prec_A}^*)$ is also a representation of $(A,\prec_A,\succ_A)$, which is called the {\bf coregular representation}.
\end{ex}

\subsection{Quasi-triangular dendriform D-bialgebras}
In this subsection, we first recall dendriform D-bialgebras and then introduce the notion of quasi-triangular dendriform D-bialgebras.

\begin{defi}{\rm(\cite{Bai})}
Let $A$ be a vector space. A dendriform D-bialgebra structure on $A$ consists of the following data:
\begin{itemize}
\item[{\rm(i)}] $\Delta_\prec,\Delta_\succ:A\longrightarrow A\otimes A$ are two linear maps such that
$(\prec_{A^*}:=\Delta_\prec^*,\succ_{A^*}:=\Delta_\succ^*):A^*\otimes A^* \longrightarrow A^*$ defines a dendriform algebra structure on $A^*$;
\item[{\rm(ii)}] $\beta_\prec,\beta_\succ:A^*\longrightarrow A^*\otimes A^*$ are two linear maps such that $(\prec_{A}:=\beta_\prec^*,\succ_{A}:=\beta_\succ^*):A\otimes A \longrightarrow A$ defines a dendriform algebra structure on $A$;
\item[{\rm(iii)}] $\Delta_\prec,\Delta_\succ,\beta_\prec,\beta_\succ$ satisfy the following equilities:
\begin{eqnarray}
\label{equ1}\Delta_\prec(x\prec_{A}y+x\succ_{A}y)-\big(\Id\otimes L_{\succ _A}(x)\big)\Delta_\prec(y)-(\huaR_A(y)\otimes \Id)\Delta_\prec(x)&=&0;\\
\label{equ2}\Delta_\succ(x\prec_{A}y+x\succ_{A}y)-\big(\Id\otimes \huaL_{A}(x)\big)\Delta_\succ(y)-(R_{\prec _A}(y)\otimes \Id)\Delta_\succ(x)&=&0;\\
\label{equ3}\beta_\prec(\xi\prec_{A^*}\eta+\xi\succ_{A^*}\eta)-\big( \Id\otimes L_{\succ_{A^*}}(\xi)\big) \beta_\prec(\eta)-(\huaR_{A^*}(\eta)\otimes \Id)\beta_\prec(\xi)&=&0;\\
\label{equ4}\beta_\succ(\xi\prec_{A^*}\eta+\xi\succ_{A^*}\eta)-\big( \Id\otimes \huaL_{A^*}(\xi)\big) \beta_\succ(\eta)-(R_{\prec_{A^*}}(\eta)\otimes \Id)\beta_\succ(\xi)&=&0;\\
\label{equ5}\big( \huaL_A (x)\otimes \Id-\Id \otimes R_{\prec_A}(x) \big)\Delta_\prec(y)+\sigma \Big( \big( L_{\succ_A} (y)\otimes \Id-\Id \otimes \huaR_{A}(y) \big) \Delta_\succ(x) \Big)&=&0;\\
\label{equ6}\big( \huaL_{A^*} (\xi)\otimes \Id-\Id \otimes R_{\prec_{A^*}}(x) \big)\beta_\prec(\eta)+\sigma' \Big( \big( L_{\succ_{A^*}} (\eta)\otimes \Id-\Id \otimes \huaR_{A^*}(\eta) \big) \beta_\succ(\xi)\Big)&=&0,
\end{eqnarray}
\end{itemize}
where $\huaL_A= L_{\prec_A}+L_{\succ_A},~\huaR_A= R_{\prec_A}+R_{\succ_A},~\huaL_{A^*}= L_{\prec _{A^*}}+L_{\succ_{A^*}},~\huaR_{A^*}= R_{\prec_{A^*}}+R_{\succ_{A^*}}.$
Moreover, $\sigma:A \otimes A \to A \otimes A $ is the exchange operator defined by
$$  \sigma(x\otimes y)=y\otimes x, \quad \forall x,y \in A ,$$
and $\sigma':A^* \otimes A^* \to A^* \otimes A^*$ is also the exchange operator. We denote a dendriform D-bialgebra by $(A,A^*,\Delta_{\prec},\Delta_{\succ},\beta_{\prec},\beta_{\succ})$, or simply by $(A,A^*).$
\end{defi}

\begin{rmk}
Note that both $(A\otimes A;\Id\otimes L_{\succ_A},\huaR_A\otimes \Id )$ and $(A\otimes A;\Id\otimes \huaL_A,R_{\prec_A}\otimes \Id)$ are the representations of the associative algebra $(A,\ast)$. Equations \eqref{equ1}-\eqref{equ2} imply that $\Delta_{\prec}$ and $\Delta_{\succ}$ are $1$-cocycles of the associative algebra $(A,\ast)$ with coefficients in the representation
$(A\otimes A;\Id\otimes L_{\succ_A},\huaR_A\otimes \Id )$ and $(A\otimes A;\Id\otimes \huaL_A,R_{\prec_A}\otimes \Id)$ respectively. Equations \eqref{equ3}-\eqref{equ4} imply that $\beta_{\prec}$ and $\beta_{\succ}$ are $1$-cocycles of the dual associative algebra $A^\ast$ with coefficients in the representations $(A^\ast\otimes A^\ast;\Id\otimes L_{\succ_{A^\ast}},\huaR_{A^\ast}\otimes \Id )$ and $(A^\ast \otimes A^\ast;\Id\otimes \huaL_{A^\ast},R_{\prec_{A^\ast}}\otimes \Id)$ respectively. Moreover, Equation \eqref{equ5} is a compatibility condition between $\Delta_{\prec}$ and $\Delta_{\succ}$. Equation \eqref{equ6} is a compatibility condition between $\beta_{\prec}$ and $\beta_{\succ}$.
\end{rmk}

\begin{pro}{\rm(\cite{Bai})}
Let $(A,A^*,\Delta_{\prec},\Delta_{\succ},\beta_{\prec},\beta_{\succ})$ be a dendriform D-bialgebra. Then its dual $(A^*,A,$ $\beta_{\prec},\beta_{\succ},\Delta_{\prec},\Delta_{\succ})$ is also a dendriform D-bialgebra.
\end{pro}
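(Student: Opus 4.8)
The plan is to exploit the manifest self-duality of the defining axioms rather than to compute anything. Since all spaces are finite-dimensional, I identify $(A^*)^*\cong A$ canonically, so that the double dual of each structure map returns to itself. Under this identification the dual datum $(A^*,A,\beta_\prec,\beta_\succ,\Delta_\prec,\Delta_\succ)$ is obtained from $(A,A^*,\Delta_\prec,\Delta_\succ,\beta_\prec,\beta_\succ)$ by the involution that exchanges $A\leftrightarrow A^*$, $\Delta_\prec\leftrightarrow\beta_\prec$, $\Delta_\succ\leftrightarrow\beta_\succ$, and $\sigma\leftrightarrow\sigma'$. The whole task then reduces to checking that this involution carries the list of axioms (i)--(iii) for $(A,A^*)$ to the corresponding list for $(A^*,A)$.

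First I would dispatch conditions (i) and (ii), which swap trivially. For the dual, the main space is $A^*$ and its dual is $A\cong(A^*)^*$. Condition (i) for $(A^*,A)$ asks that $\beta_\prec,\beta_\succ$ dualize to a dendriform structure on $(A^*)^*=A$; by the definitions $\prec_A=\beta_\prec^*$ and $\succ_A=\beta_\succ^*$ this is exactly condition (ii) for $(A,A^*)$. Symmetrically, condition (ii) for $(A^*,A)$ asks that $\Delta_\prec,\Delta_\succ$ dualize to a dendriform structure on $A^*$, which is condition (i) for $(A,A^*)$. Both hold by hypothesis, and here finite-dimensionality is what guarantees $\Delta^{**}=\Delta$ and $\beta^{**}=\beta$ so the construction is an honest involution.

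Next I would verify the six compatibility equations \eqref{equ1}--\eqref{equ6}, which I expect to be the only place needing care, since one must keep track of which exchange operator and which sub-adjacent representation among $L_{\succ_A},R_{\prec_A},\huaL_A,\huaR_A$ (versus their $A^*$-counterparts) is attached to the main space as opposed to the dual space. Writing out \eqref{equ1}--\eqref{equ6} for the dual and substituting the dual data $\Delta'=\beta$, $\beta'=\Delta$, $A'=A^*$, $(A')^*=A$, one finds: the two cocycle equations \eqref{equ1}--\eqref{equ2} for the dual's ``$\Delta$'' (now $\beta$) on its main space $A^*$ become precisely \eqref{equ3}--\eqref{equ4}; the cocycle equations \eqref{equ3}--\eqref{equ4} for the dual's ``$\beta$'' (now $\Delta$) on $A$ become precisely \eqref{equ1}--\eqref{equ2}; the compatibility equation \eqref{equ5} for the dual reproduces \eqref{equ6}; and \eqref{equ6} for the dual reproduces \eqref{equ5}. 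Thus the full set of axioms is stable as a set under the involution.

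The main obstacle, such as it is, is purely bookkeeping: confirming that the exchange operators are matched correctly (so that $\sigma'$ governs the main space $A^*$ of the dual, as demanded by \eqref{equ6}, while $\sigma$ governs $A$), and that $\huaL_{A^*},\huaR_{A^*}$ are indeed the sub-adjacent representations of the dual associative algebra $A^*$ rather than of $A$. Once these identifications are pinned down, the result follows by inspection of \eqref{equ1}--\eqref{equ6}, and no genuine computation beyond verifying this symmetry is required.
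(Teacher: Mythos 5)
Your proof is correct and is essentially the same argument the statement rests on: the paper gives no proof here (it cites \cite{Bai}), and the standard justification is precisely the self-duality of the axioms that you exploit --- conditions (i) and (ii) swap, \eqref{equ1}--\eqref{equ2} swap with \eqref{equ3}--\eqref{equ4}, \eqref{equ5} swaps with \eqref{equ6}, and finite-dimensionality supplies the canonical identification $(A^*)^*\cong A$ making the exchange an involution. One bookkeeping remark: in the paper's \eqref{equ6} the term $R_{\prec_{A^*}}(x)$ is a typo for $R_{\prec_{A^*}}(\xi)$, and your matching of \eqref{equ5} with \eqref{equ6} implicitly corrects it.
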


\begin{defi}
Suppose that $(A,A^*,\Delta_{\prec},\Delta_{\succ},\beta_{\prec},\beta_{\succ})$ and $(B,B^*,\Delta_{\prec},\Delta_{\succ},\beta_{\prec},\beta_{\succ})$ are two dendriform D-bialgebras. A linear map $\varphi:A\longrightarrow B$ is called a {\bf homomorphism of dendriform D-bialgebras}, if
$\varphi:A\longrightarrow B$ is a homomorphism of dendriform algebras such that $\varphi^*:B^*\longrightarrow A^*$ is also a homomorphism of dendriform algebras, that is, $\varphi$ satisfies
\begin{eqnarray}
&&(\varphi\otimes\varphi)\circ \Delta_{\succ}=\Delta_{\succ}\circ \varphi,\quad (\varphi\otimes\varphi)\circ \Delta_{\prec}=\Delta_{\succ}\circ \varphi,\\
&&(\varphi^*\otimes\varphi^*)\circ \beta_{\succ}=\beta_{\succ}\circ \varphi^*,\quad
(\varphi^*\otimes\varphi^*)\circ \beta_{\prec}=\beta_{\succ}\circ \varphi^*.
\end{eqnarray}
Furthermore, if $\varphi:A\longrightarrow B$ is a linear isomorphism of vector spaces, then $\varphi$ is called an {\bf isomorphism of dendriform D-bialgebras}.
\end{defi}

The notion of a coboundary dendriform D-bialgebra was introduced in \cite{Bai}.

\begin{defi}
A dendriform D-bialgebra $(A,A^*,\Delta_{\prec},\Delta_{\succ},\beta_{\prec},\beta_{\succ})$ is called {\bf coboundary} if both $\Delta_{\prec}$ and $\Delta_{\succ}$ are $1$-coboundaries of the sub-adjacent associative algebra $(A,\ast)$ with coefficients in the representations $(A\otimes A;\Id\otimes \huaL_A,R_{\prec_A}\otimes \Id)$ and $(A\otimes A;\Id\otimes L_{\succ_A},\huaR_A\otimes \Id )$ respectively, that is, there exists $r_\succ,r_\prec\in A\otimes A$ such that
\begin{eqnarray}
\label{del-succ}\Delta_{\succ}(x)&=&(\Id\otimes \huaL_A(x) -R_{\prec_A}(x)\otimes \Id)r_{\succ};\\
\label{del-prec}\Delta_{\prec}(x)&=&(\Id\otimes L_{\succ_A}(x)-\huaR_A(x)\otimes \Id)r_{\prec},\quad \forall x\in A.
\end{eqnarray}
\end{defi}

Let $r=\sum_i a_i\otimes b_i\in A \otimes A$. Let $U(A^c)$ be the universal enveloping algebra of the Lie algebra $A^c$, where $A^c$ is the sub-adjacent Lie algebra of the  sub-adjacent associative algebra $(A,\ast)$ by the commutator.  We introduce $r_{12},r_{13},r_{23}\in U(A^c)$ as follows:
\begin{eqnarray*}
r_{12}=\sum_i a_i\otimes b_i\otimes 1,\quad r_{13}=\sum_i a_i\otimes 1\otimes b_i,\quad r_{23}=\sum_i 1\otimes a_i\otimes b_i.
\end{eqnarray*}
Furthermore, the notations $r_{12}\ast r_{13},r_{13}\prec r_{23}$ and $r_{23}\succ r_{12}$ are defined by
\begin{eqnarray}
\nonumber r_{12}\ast r_{13}&=& \sum_{i,j} (a_i \ast a_j) \otimes b_i \otimes b_j,\\
\label{DYBE}r_{13}\prec r_{23}&=& \sum_{i,j} a_i\otimes a_j \otimes (b_i \prec_A b_j),\\
\nonumber r_{23}\succ r_{12}&=&\sum_{i,j} a_j\otimes  (a_i\succ_A b_j) \otimes b_i.
\end{eqnarray}

Then we consider a special condition:
\begin{eqnarray}\label{r-prec}
 r_\prec=r=\sum_i a_i\otimes b_i,\quad  r_\succ=-\sigma(r)=-\sum_i b_i\otimes a_i.
\end{eqnarray}

\begin{thm}\label{coboundary dendriform}{\rm(\cite{Bai})}
Let $(A,\prec_A,\succ_A)$ be a dendriform algebra and $r=\sum_{i} a_i\otimes b_i \in A\otimes A.$ Then the maps $\Delta_{\succ},\Delta_{\prec}$ defined by \eqref{del-succ} and \eqref{del-prec} with $r_\succ, r_\prec$ satisfying \eqref{r-prec} induce a dendriform algebra structure on $A^*$ such that $(A,A^*)$ is a coboundary dendriform D-bialgebra if and only if $r$ satisfies the following equations
\begin{eqnarray}
\label{condition1}&&\Big(\sigma Q(x) \Big)Q(y)(r-\sigma(r))=0;\\
\label{condition2}&&\Big(R_{\prec_A}(x)\otimes L_{\succ_A}(y)-\Id\otimes L_{\succ_A}(y\prec_A x)-R_{\prec_A}(y\succ_A x)\otimes \Id\Big)(r-\sigma(r))=0;\\
\label{condition3}&&\Big(R_{\prec_A}(x)\otimes \Id \otimes \Id- \Id\otimes \Id\otimes L_{\succ_A}(x)\Big)(-r_{23}\ast r_{21}+r_{21}\prec r_{31}+r_{31}\succ r_{23})=0;\\
\label{condition4}&&\Big(\huaR_A(x)\otimes \Id\otimes \Id-\Id\otimes \Id \otimes L_{\succ_A}(x)\Big)(r_{12}\ast r_{13}-r_{13}\prec r_{23}-r_{23}\succ r_{12})\\
\nonumber&&+\sum_{i} (a_i\ast x)\otimes Q(b_i)(r-\sigma(r))-a_i \otimes \Big( Q(x\succ_A b_i)
(r-\sigma(r)) \Big)=0;\\
\label{condition5}&&\Big(R_{\prec_A}(x)\otimes \Id \otimes \Id- \Id\otimes \Id\otimes \huaL_A(x)\Big)(-r_{31}\ast r_{32}+r_{32}\prec r_{12}+r_{12}\succ r_{31})\\
\nonumber&&+\sum_{i} \Big( Q(b_i)(r-\sigma(r))\otimes (x\ast a_i)-Q(b_i\prec_A x)(r-\sigma(r))\otimes a_i \Big)=0,
\end{eqnarray}
where $Q(x)=\Id\otimes L_{\succ_A}(x)-R_{\prec_A}(x)\otimes \Id$ and $r_{12}\ast r_{13}=r_{12}\prec r_{13}+r_{12}\succ r_{13}$. We denote a coboundary dendriform D-bialgebra by $(A,A_r^*)$.
\end{thm}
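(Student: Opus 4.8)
The plan is to verify, one at a time, the defining conditions of a dendriform D-bialgebra for the data in which $\beta_\prec,\beta_\succ$ are the transposes of the fixed products $\prec_A,\succ_A$ and $\Delta_\prec,\Delta_\succ$ are the coboundaries given by \eqref{del-succ}, \eqref{del-prec} with $r_\prec,r_\succ$ as in \eqref{r-prec}. Condition (ii) holds for free, since $(A,\prec_A,\succ_A)$ is already a dendriform algebra and $\beta_\prec^\ast=\prec_A$, $\beta_\succ^\ast=\succ_A$. Likewise, because $\Delta_\prec$ and $\Delta_\succ$ are by construction $1$-coboundaries of $(A,\ast)$, they are automatically $1$-cocycles, so equations \eqref{equ1} and \eqref{equ2} hold with no constraint on $r$. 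Thus the content of the theorem is to identify precisely when the remaining conditions—condition (i), the $\beta$-cocycle equations \eqref{equ3}, \eqref{equ4}, and the two compatibility equations \eqref{equ5}, \eqref{equ6}—are satisfied, and to match these with \eqref{condition1}--\eqref{condition5}.

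First I would treat condition (i): that $(\prec_{A^\ast},\succ_{A^\ast})=(\Delta_\prec^\ast,\Delta_\succ^\ast)$ is a dendriform algebra on $A^\ast$. I would write each of the three dendriform axioms for $A^\ast$ as an identity of maps $A^\ast\otimes A^\ast\otimes A^\ast\to A^\ast$, transpose it against $A^{\otimes 3}$ using the defining pairings, and substitute the coboundary formulas for $\Delta_\prec,\Delta_\succ$. The terms should organize into the three-tensor quantities $r_{12}\ast r_{13}$, $r_{13}\prec r_{23}$, $r_{23}\succ r_{12}$ of \eqref{DYBE} and their images under the relevant permutations, acted on by operators of the form $\huaR_A(x)\otimes\Id\otimes\Id$, $R_{\prec_A}(x)\otimes\Id\otimes\Id$, $\Id\otimes\Id\otimes L_{\succ_A}(x)$ and $\Id\otimes\Id\otimes\huaL_A(x)$. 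Using the dendriform relations on $A$ and the representation identities to move operators across tensor factors, the three axioms become exactly \eqref{condition3}, \eqref{condition4} and \eqref{condition5}, with the residual two-tensor terms in \eqref{condition4}, \eqref{condition5} produced by the failure of $r$ to be symmetric, i.e. by $r-\sigma(r)$.

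Next I would substitute the coboundary formulas into the compatibility equation \eqref{equ5}. Both summands become operators applied to $r$ and to $\sigma(r)$ (the latter entering through $r_\succ=-\sigma(r)$ and the exchange operator in \eqref{equ5}); collecting terms and writing $Q(x)=\Id\otimes L_{\succ_A}(x)-R_{\prec_A}(x)\otimes\Id$ should recover the skew-part conditions \eqref{condition1} and \eqref{condition2}. I expect the companion compatibility \eqref{equ6}, which is governed by the same $r$ through the transpose structure, to contribute the remaining two-tensor condition after the analogous reduction. Finally, for the $\beta$-cocycle equations \eqref{equ3}, \eqref{equ4}: since $\beta_\prec,\beta_\succ$ are the transposes of the fixed products and the $A^\ast$-operations appearing there are the transposes of $\Delta_\prec,\Delta_\succ$, transposing \eqref{equ3}, \eqref{equ4} back to $A^{\otimes 3}$ turns them into identities already implied by condition (i) together with the dendriform axioms of $A$. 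Guided by the duality proposition, which exchanges the roles of $(\Delta_\prec,\Delta_\succ)$ and $(\beta_\prec,\beta_\succ)$, these should impose no condition on $r$ beyond \eqref{condition1}--\eqref{condition5}. Reading all the equivalences in both directions then gives the stated ``if and only if.''

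The main obstacle is the second step. Translating the three dendriform axioms on $A^\ast$ into honest identities in $A^{\otimes 3}$ and recognizing the resulting expression as the combinations in \eqref{condition3}--\eqref{condition5} is a long and error-prone computation: one must keep scrupulous track of the exchange operator $\sigma$ and its placement, of the distinction between the one-sided operators $L_{\succ_A},R_{\prec_A}$ and the full operators $\huaL_A=L_{\prec_A}+L_{\succ_A}$, $\huaR_A=R_{\prec_A}+R_{\succ_A}$, and of which tensor slot each operator acts on after transposition. Getting the two-tensor correction terms in \eqref{condition4}, \eqref{condition5} to come out with the correct signs and arguments $x\succ_A b_i$ and $b_i\prec_A x$ is the most delicate point, and is where the precise form of the coboundary ansatz \eqref{r-prec} is essential.
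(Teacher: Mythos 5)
Your high-level strategy (substitute the coboundary ansatz and match each axiom of a dendriform D-bialgebra with one of \eqref{condition1}--\eqref{condition5}) is the right one, and your handling of condition (ii), of the automatic cocycle equations \eqref{equ1}--\eqref{equ2}, and of the identification of condition (i) with \eqref{condition3}--\eqref{condition5} agrees with the correspondence the paper records (in the remark following the theorem; the theorem itself is quoted from Bai, so that remark is the paper's account of the proof). However, your assignment of the remaining equations is wrong in two places, and the error is not cosmetic. First, you claim that substituting the coboundary formulas into \eqref{equ5} recovers \eqref{condition1} and \eqref{condition2}. It does not: with the specific choice $r_\prec=r$, $r_\succ=-\sigma(r)$ of \eqref{r-prec}, equation \eqref{equ5} is \emph{identically} satisfied for every $r$. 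Writing $r=\sum_i a_i\otimes b_i$, the first summand of \eqref{equ5} produces the four families of terms $(x\ast a_i)\otimes(y\succ_A b_i)$, $-a_i\otimes((y\succ_A b_i)\prec_A x)$, $-(x\ast(a_i\ast y))\otimes b_i$, $(a_i\ast y)\otimes(b_i\prec_A x)$, and the $\sigma$-twisted second summand produces exactly their negatives, using only the dendriform axiom $(y\succ_A b)\prec_A x=y\succ_A(b\prec_A x)$ and associativity of $\ast$. So no constraint on $r$ can come out of \eqref{equ5}; this cancellation is precisely why the ansatz \eqref{r-prec} is chosen. Second, you claim \eqref{equ3}--\eqref{equ4} impose nothing beyond condition (i). They are not automatic: since $\beta_\prec,\beta_\succ$ are the (fixed) transposes of $\prec_A,\succ_A$ while the multiplications $\prec_{A^*},\succ_{A^*}$ appearing in \eqref{equ3}--\eqref{equ4} depend on $r$, these equations are a genuine further constraint, and they are equivalent to \eqref{condition2}. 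Likewise \eqref{condition1} is not produced where you look for it; it is equivalent to \eqref{equ6}, the compatibility on the dual side, which again involves the $r$-dependent $A^*$-operations.

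The upshot is that your blueprint, carried out literally, would terminate with the equivalence ``dendriform structure on $A^*$ $\Longleftrightarrow$ \eqref{condition3}--\eqref{condition5}'' and would never generate the two $2$-tensor conditions \eqref{condition1}--\eqref{condition2}: the equation you expect to produce them reduces to $0=0$, and the equations that actually produce them (\eqref{equ6} and \eqref{equ3}--\eqref{equ4}) are the ones you dismiss as automatic or leave vague (``the remaining two-tensor condition''). The correct matching is: \eqref{equ1}, \eqref{equ2}, \eqref{equ5} hold for free under \eqref{r-prec}; condition (i) $\Longleftrightarrow$ \eqref{condition3}--\eqref{condition5}; \eqref{equ6} $\Longleftrightarrow$ \eqref{condition1}; and \eqref{equ3}--\eqref{equ4} $\Longleftrightarrow$ \eqref{condition2}. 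Repairing your proof requires redoing the dual-side computations (transposing against $A^{\otimes 2}$ and expressing $\prec_{A^*},\succ_{A^*}$ via \eqref{succr}--\eqref{precr}), which is where the skew part $r-\sigma(r)$ and the operator $Q(x)$ actually enter.
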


\begin{rmk}
In fact, \eqref{condition3}-\eqref{condition5} are satisfied if and only if $(\prec_{A^*}:=\Delta_\prec^*,\succ_{A^*}:=\Delta_\succ^*):A^*\otimes A^* \longrightarrow A^*$ defines a dendriform algebra structure on $A^*$. \eqref{condition1} is equivalent to \eqref{equ6} since $r_\succ, r_\prec$ satisfy \eqref{r-prec}. \eqref{equ1}-\eqref{equ2} are naturally satisfied since $\Delta_{\succ},\Delta_{\prec}$ are $1$-coboundaries defined by \eqref{del-succ} and \eqref{del-prec}. In this case, \eqref{equ5} is also naturally satisfied. Moreover, \eqref{equ3}-\eqref{equ4} are equivalent to \eqref{condition2}.
\end{rmk}

In particular, the equation $r_{12}\ast r_{13}-r_{13}\prec r_{23}-r_{23}\succ r_{12}=0$ is called the {\bf D-equation} on the dendriform algebra $(A,\prec_A,\succ_A)$.

Let $r\in A\otimes A.$ We define $r_{+},r_{-}:A^*\longrightarrow A$ by
\begin{equation}
r(\xi,\eta)=\langle r_{+}(\xi),\eta \rangle=\langle \xi,r_{-}(\eta)\rangle, \quad \forall \xi,\eta\in A^*.
\end{equation}
Then the dendriform algebra structure on $A^*$ defined by \eqref{del-succ} and \eqref{del-prec} is given by
\begin{eqnarray}
\label{succr}\xi \succ_r \eta&=&\huaR_A^* (r_+(\xi)) \eta-L_{\prec_A}^*(r_{-}(\eta))\xi;\\
\label{precr}\xi \prec_r \eta&=&\huaL_A^* (r_+(\eta)) \xi-R_{\succ_A}^*(r_{-}(\xi))\eta, \quad \forall \xi,\eta\in A^*.
\end{eqnarray}
We will denote the dendriform algebra $(A^*,\succ_r,\prec_r)$ by $A^{\ast}_r$.

Now we introduce the notion of $(L_\succ,R_\prec)$-invariance of a $2$-tensor $r\in A\otimes A,$ which is the main ingredient in the definition of a quasi-triangular dendriform D-bialgebra.

\begin{defi}
Let $(A,\prec_A,\succ_A)$ be a dendriform algebra and $r\in A \otimes A$. Then r is called {\bf $(L_\succ,R_\prec)$-invariant} if
\begin{eqnarray}
\big(\Id \otimes \huaL_A(x)-R_{\prec_A}(x)\otimes \Id \big)r&=&0,\\
\big(\Id \otimes L_{\succ_A}(x)-\huaR_A(x)\otimes \Id \big)\sigma(r)&=&0, \quad \forall x\in A.
\end{eqnarray}
\end{defi}

\begin{lem}\label{L-R inv}
Let $(A,\prec_A,\succ_A)$ be a dendriform algebra and $r\in A \otimes A$. Then r is  $(L_\succ,R_\prec)$-invariant if and only if
\begin{eqnarray}
x\ast r_{+}(\xi)&=&r_{+} (R_{\prec_A}^*(x)\xi),\\
r_{+}(\xi)\ast x&=&r_{+} (L_{\succ_A}^*(x)\xi),  \quad \forall x\in A,\xi\in A^*.
\end{eqnarray}
\end{lem}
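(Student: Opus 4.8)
The plan is to unwind the definition of $(L_\succ,R_\prec)$-invariance into coordinates and read off the two identities for $r_+$ by pairing against a covector in one tensor slot. Write $r=\sum_i a_i\otimes b_i$; then the defining relation $r(\xi,\eta)=\langle r_+(\xi),\eta \rangle$ gives $r_+(\xi)=\sum_i\langle \xi,a_i\rangle b_i$. The mechanism that handles both directions of the equivalence at once is the elementary finite-dimensional fact that a $2$-tensor $T\in A\otimes A$ vanishes if and only if $(\langle \xi,\cdot\rangle\otimes\Id)T=0$ for all $\xi\in A^*$ (equivalently, if and only if $(\Id\otimes\langle \xi,\cdot\rangle)T=0$ for all $\xi$); contraction in a fixed slot identifies $A\otimes A$ with $\Hom(A^*,A)$, so each invariance equation is equivalent to the vanishing of all its contractions.

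First I would treat $\big(\Id\otimes \huaL_A(x)-R_{\prec_A}(x)\otimes\Id\big)r=0$. Using $\huaL_A(x)b_i=x\ast b_i$ and $R_{\prec_A}(x)a_i=a_i\prec_A x$, this tensor equals $\sum_i a_i\otimes(x\ast b_i)-\sum_i(a_i\prec_A x)\otimes b_i$. Pairing the first slot with an arbitrary $\xi\in A^*$, the first sum becomes $x\ast\sum_i\langle \xi,a_i\rangle b_i=x\ast r_+(\xi)$, while for the second sum the transpose relation $\langle \xi,a_i\prec_A x\rangle=\langle \xi,R_{\prec_A}(x)a_i\rangle=\langle R_{\prec_A}^*(x)\xi,a_i\rangle$ turns it into $\sum_i\langle R_{\prec_A}^*(x)\xi,a_i\rangle b_i=r_+(R_{\prec_A}^*(x)\xi)$. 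Hence the first invariance equation holds for all $x$ if and only if $x\ast r_+(\xi)=r_+(R_{\prec_A}^*(x)\xi)$ for all $x,\xi$, which is the first asserted identity.

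For the second condition $\big(\Id\otimes L_{\succ_A}(x)-\huaR_A(x)\otimes\Id\big)\sigma(r)=0$ I would work with $\sigma(r)=\sum_i b_i\otimes a_i$ and now contract the \emph{second} slot against $\xi$, since this is the slot carrying the $a_i$ and thus recovers $r_+(\xi)=\sum_i\langle\xi,a_i\rangle b_i$. Using $L_{\succ_A}(x)a_i=x\succ_A a_i$ and $\huaR_A(x)b_i=b_i\ast x$, the tensor is $\sum_i b_i\otimes(x\succ_A a_i)-\sum_i(b_i\ast x)\otimes a_i$; pairing the second slot with $\xi$ and using $\langle\xi,x\succ_A a_i\rangle=\langle L_{\succ_A}^*(x)\xi,a_i\rangle$ yields $r_+(L_{\succ_A}^*(x)\xi)-r_+(\xi)\ast x$, giving the second identity $r_+(\xi)\ast x=r_+(L_{\succ_A}^*(x)\xi)$.

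The computations are routine, so there is no genuine analytic obstacle; the only point demanding care is the bookkeeping of \emph{which} tensor slot to contract. Because $r_+$ weights by the covector value on the first factor $a_i$ and outputs the second factor $b_i$, the first invariance relation (stated for $r$) must be paired in slot one, whereas after applying $\sigma$ the roles are swapped and the second relation must be paired in slot two; pairing the wrong slot would instead produce identities for $r_-$. Keeping the transposes $R_{\prec_A}^*$ and $L_{\succ_A}^*$ matched to their correct operators is the remaining thing to verify, and both directions of the equivalence then follow simultaneously from the contraction principle noted above.
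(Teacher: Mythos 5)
Your proof is correct and follows essentially the same route as the paper's: both arguments unwind the two invariance equations through the duality pairing, move the multiplication operators across the pairing as adjoints, and invoke nondegeneracy to convert the tensor identities into the two equations for $r_+$. The only cosmetic difference is that you work in coordinates $r=\sum_i a_i\otimes b_i$ and contract a single slot, while the paper pairs the full tensor against $\xi\otimes\eta$; your slot-bookkeeping (including the observation that contracting the other slot yields the $r_-$ versions) is accurate.
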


\begin{proof}
By a direct calculation, for all $x\in A, \xi,\eta\in A^*$, we have
\begin{eqnarray*}
\langle   (\Id \otimes \huaL_A(x)-R_{\prec_A}(x)\otimes \Id)r,\xi\otimes\eta  \rangle
&=&\langle r,\xi\otimes \huaL_A^*(x)\eta \rangle-\langle r, R_{\prec_A}^*(x)\xi\otimes \eta  \rangle\\
&=&\langle  x\ast r_{+}(\xi),\eta\rangle-\langle r_{+}(R_{\prec_A}^*(x)\xi),\eta\rangle\\
&=&\langle x\ast r_{+}(\xi)-r_{+} (R_{\prec_A}^*(x)\xi),\eta  \rangle,
\end{eqnarray*}
and
\begin{eqnarray*}
\langle   (\Id \otimes L_{\succ_A}(x)-\huaR_A(x)\otimes \Id)\sigma(r),\eta\otimes\xi  \rangle
&=&\langle   (L_{\succ_A}(x)\otimes\Id - \Id \otimes \huaR_A(x))r,\xi \otimes \eta \rangle\\
&=&\langle r,L_{\succ_A}^*(x)\xi\otimes\eta \rangle-\langle r,\xi\otimes \huaR_A^*(x) \eta  \rangle\\
&=&\langle r_{+} (L_{\succ_A}^*(x)\xi) ,\eta\rangle-\langle r_{+}(\xi)\ast x,\eta\rangle\\
&=&\langle r_{+} (L_{\succ_A}^*(x)\xi)-r_{+}(\xi)\ast x,\eta  \rangle,
\end{eqnarray*}
which implies that $r$ is $(L_\succ,R_\prec)$-invariant if and only if $x\ast r_{+}(\xi)=r_{+} (R_{\prec_A}^*(x)\xi)$ and $r_{+}(\xi)\ast x=r_{+} (L_{\succ_A}^*(x)\xi)$, for all $x\in A,\xi\in A^*$.
\end{proof}

Denote by $I$ the operator
\begin{eqnarray}\label{I2}
I=r_{+}-r_{-}:A^*\longrightarrow A.
\end{eqnarray}
Note that $I^*=-I$. Actually, $\frac{1}{2} I$ is the contraction of the skew-symmetric part $a$ of $r$, which means that $\frac{1}{2}\langle I(\xi),\eta \rangle=a(\xi,\eta)$. If $r$ is symmetric, then $I=0$.

Now we give another characterization of the $(L_\succ,R_\prec)$-invariant condition.

\begin{lem}\label{inv}
Let $(A,\prec_A,\succ_A)$ be a dendriform algebra and $r\in A \otimes A$. The skew-symmetric part $a$ of $r$ is $(L_\succ,R_\prec)$-invariant if and only if
$$I\circ R_{\prec_A}^*(x)=\huaL_A(x)\circ I,\quad I\circ L_{\succ_A}^*(x)=\huaR_A(x)\circ I,$$
or
$$R_{\prec_A}(x)\circ I= I \circ \huaL_A^*(x),\quad L_{\succ_A}(x) \circ I = I\circ \huaR^*_A(x), $$
where $I$ is given by \eqref{I2}.
\end{lem}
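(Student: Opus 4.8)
The plan is to reduce the statement to Lemma \ref{L-R inv}, applied not to $r$ itself but to the $2$-tensor $\tilde{a}:=\frac{1}{2}(r-\sigma(r))$ that represents the skew-symmetric part $a$. First I would record the elementary identities relating $\tilde a$ to the operator $I$. Since $(\sigma(r))_{+}=r_{-}$, one has $\tilde a_{+}=\frac{1}{2}(r_{+}-r_{-})=\frac{1}{2}I$. Because the defining conditions of $(L_\succ,R_\prec)$-invariance are linear (indeed homogeneous) in the tensor, $a$ is $(L_\succ,R_\prec)$-invariant precisely when the equivalent conditions of Lemma \ref{L-R inv} hold for $\tilde a$.

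Applying Lemma \ref{L-R inv} to $\tilde a$ and substituting $\tilde a_{+}=\frac{1}{2}I$ (so the factor $\frac{1}{2}$ cancels on both sides), invariance of $a$ becomes equivalent to
\[
x\ast I(\xi)=I(R_{\prec_A}^*(x)\xi),\qquad I(\xi)\ast x=I(L_{\succ_A}^*(x)\xi),\quad \forall x\in A,\ \xi\in A^*.
\]
Next I would rewrite the left-hand sides through the sub-adjacent product: since $x\ast y=\huaL_A(x)y$ and $y\ast x=\huaR_A(x)y$, the two displayed equations read $\huaL_A(x)\circ I=I\circ R_{\prec_A}^*(x)$ and $\huaR_A(x)\circ I=I\circ L_{\succ_A}^*(x)$, which is exactly the first pair of identities in the statement.

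Finally, to obtain the second pair I would take adjoints of the first pair and invoke $I^*=-I$. Taking the adjoint of $I\circ R_{\prec_A}^*(x)=\huaL_A(x)\circ I$ gives $R_{\prec_A}(x)\circ I^*=I^*\circ \huaL_A^*(x)$, and the two minus signs coming from $I^*=-I$ cancel to yield $R_{\prec_A}(x)\circ I=I\circ \huaL_A^*(x)$; the same manipulation on the second identity gives $L_{\succ_A}(x)\circ I=I\circ \huaR_A^*(x)$. Since taking adjoints once more recovers the first pair, the two systems are equivalent, completing the chain of equivalences. I expect the only delicate point to be the bookkeeping at the start, namely correctly attaching the operator $\frac{1}{2}I$ to the skew-symmetric part via $(\sigma(r))_{+}=r_{-}$, so that Lemma \ref{L-R inv} can be invoked verbatim; after that, everything is a formal rewriting using the definitions of $\huaL_A,\huaR_A$ and the relation $I^*=-I$.
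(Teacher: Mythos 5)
Your proposal is correct and follows essentially the same route as the paper: both apply Lemma \ref{L-R inv} to the skew-symmetric part (your $\tilde a=\frac{1}{2}(r-\sigma(r))$ is exactly the paper's $a$), use $a_{+}=\frac{1}{2}I$ to turn the invariance conditions into the operator identities $I\circ R_{\prec_A}^*(x)=\huaL_A(x)\circ I$ and $I\circ L_{\succ_A}^*(x)=\huaR_A(x)\circ I$, and then pass to the second pair by taking adjoints and using $I^*=-I$. Your extra bookkeeping via $(\sigma(r))_{+}=r_{-}$ is exactly the implicit step the paper condenses into ``Note that $a_{+}=\frac{1}{2}I$.''
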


\begin{proof}
By Lemma \ref{L-R inv}, the skew-symmetric part $a$ of $r$ is $(L_\succ,R_\prec)$-invariant if and only if
\begin{equation}\label{skewsymmetric invariance}
x\ast a_{+}(\xi)=a_{+} (R_{\prec_A}^*(x)\xi),\quad a_{+}(\xi)\ast x=a_{+} (L_{\succ_A}^*(x)\xi), \quad \forall x\in A.
\end{equation}
Note that $a_{+}=\frac{1}{2} I.$ Thus the skew-symmetric part $a$ of $r$ is $(L_\succ,R_\prec)$-invariant if and only if
$$I\circ R_{\prec_A}^*(x)=\huaL_A(x)\circ I,\quad I\circ L_{\succ_A}^*(x)=\huaR_A(x)\circ I,$$
which is equivalent to
$$R_{\prec_A}(x)\circ I= I \circ \huaL_A^*(x),\quad  L_{\succ_A}(x) \circ I = I\circ \huaR^*_A(x),$$
by $I^*=-I.$
\end{proof}

\begin{lem}
Let $r=a+\Lambda$ with $a\in \wedge^2 A$ and $\Lambda\in S^2(A).$ If $a$ is $(L_\succ,R_\prec)$-invariant, then
\begin{eqnarray}
\label{lem1}\langle \eta,a_{+}(\xi)\ast x \rangle+\langle \xi, x\succ_A a_{+}(\eta) \rangle&=&0,\\
\label{lem2}\langle \eta, x\ast a_{+}(\xi) \rangle+\langle \xi, a_{+}(\eta)\prec_A x \rangle&=&0,     \quad \forall x\in A, \xi,\eta\in A^*.
\end{eqnarray}
Furthermore, dendriform algebra multiplications $\prec_r, \succ_r$ on $A^*$ given by \eqref{succr}- \eqref{precr} reduce to
\begin{eqnarray}
\label{lem3}\xi \prec_r \eta&=&\huaL_A^* (\Lambda_+(\eta))\xi-R_{\succ_A}^*(\Lambda_+(\xi))\eta,\\
\label{lem4}\xi \succ_r \eta&=&\huaR_A^* (\Lambda_+(\xi))\eta-L_{\prec_A}^*(\Lambda_+(\eta))\xi, \quad \forall \xi,\eta\in A^*.
\end{eqnarray}
\end{lem}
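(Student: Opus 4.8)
The plan is to split the statement into two halves: first prove the pairing identities \eqref{lem1}--\eqref{lem2}, and then use them to kill the "skew-symmetric cross terms" when substituting $r_+=a_++\Lambda_+$ and $r_-=-a_++\Lambda_+$ into the formulas \eqref{succr}--\eqref{precr}. The two inputs I would lean on are: (a) the reformulation from Lemma \ref{L-R inv}, recorded in \eqref{skewsymmetric invariance}, namely $a_+(\xi)\ast x=a_+(L_{\succ_A}^*(x)\xi)$ and $x\ast a_+(\xi)=a_+(R_{\prec_A}^*(x)\xi)$; and (b) the elementary skew-symmetry of the contraction, $\langle a_+(\zeta),\eta\rangle=-\langle a_+(\eta),\zeta\rangle$ for all $\zeta,\eta\in A^*$, which is immediate from $a\in\wedge^2A$ (equivalently from $I^*=-I$ with $a_+=\tfrac12 I$ as in \eqref{I2}).

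For \eqref{lem1}, I would start from $\langle\eta,a_+(\xi)\ast x\rangle$, rewrite $a_+(\xi)\ast x=a_+(L_{\succ_A}^*(x)\xi)$ by \eqref{skewsymmetric invariance}, then apply skew-symmetry to get $\langle a_+(L_{\succ_A}^*(x)\xi),\eta\rangle=-\langle a_+(\eta),L_{\succ_A}^*(x)\xi\rangle=-\langle L_{\succ_A}(x)a_+(\eta),\xi\rangle=-\langle\xi,x\succ_A a_+(\eta)\rangle$, which is exactly \eqref{lem1}. The identity \eqref{lem2} is proved in the same way, using instead $x\ast a_+(\xi)=a_+(R_{\prec_A}^*(x)\xi)$ together with $R_{\prec_A}(x)a_+(\eta)=a_+(\eta)\prec_A x$.

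For the reduction, I would substitute $r_+=a_++\Lambda_+$ and $r_-=\Lambda_+-a_+$ (using $\Lambda_-=\Lambda_+$ for $\Lambda\in S^2(A)$ and $a_-=-a_+$) into \eqref{succr}. Expanding, $\xi\succ_r\eta$ becomes the desired expression $\huaR_A^*(\Lambda_+(\xi))\eta-L_{\prec_A}^*(\Lambda_+(\eta))\xi$ plus the leftover term $\huaR_A^*(a_+(\xi))\eta+L_{\prec_A}^*(a_+(\eta))\xi$. Pairing this leftover against an arbitrary $z\in A$ and using $\huaR_A(a_+(\xi))z=z\ast a_+(\xi)$ and $L_{\prec_A}(a_+(\eta))z=a_+(\eta)\prec_A z$ turns it into $\langle\eta,z\ast a_+(\xi)\rangle+\langle\xi,a_+(\eta)\prec_A z\rangle$, which vanishes by \eqref{lem2}; this gives \eqref{lem4}. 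Symmetrically, substituting into \eqref{precr} produces the leftover $\huaL_A^*(a_+(\eta))\xi+R_{\succ_A}^*(a_+(\xi))\eta$, whose pairing with $z$ equals $\langle\xi,a_+(\eta)\ast z\rangle+\langle\eta,z\succ_A a_+(\xi)\rangle$, vanishing by \eqref{lem1} (with $\xi,\eta$ interchanged); this gives \eqref{lem3}.

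I do not expect a genuine conceptual obstacle here, since \eqref{skewsymmetric invariance} already converts the invariance hypothesis into exactly the algebraic form needed. The only real care-point is bookkeeping: correctly identifying each composite operator with a product (e.g.\ $\huaR_A(x)y=y\ast x$, $\huaL_A(x)y=x\ast y$, $R_{\succ_A}(x)y=y\succ_A x$, $L_{\prec_A}(x)y=x\prec_A y$) and tracking the sign flips coming from $a_-=-a_+$ versus $\Lambda_-=\Lambda_+$, so that the symmetric part survives while the skew part cancels. Getting these matchings right is what makes the two leftover terms line up precisely with \eqref{lem2} and \eqref{lem1} respectively.
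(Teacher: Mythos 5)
Your proposal is correct and follows essentially the same route as the paper: both prove \eqref{lem1}--\eqref{lem2} by combining the invariance reformulation \eqref{skewsymmetric invariance} with the skew-symmetry of $a_+$, and both then substitute $r_+=\Lambda_++a_+$, $r_-=\Lambda_+-a_+$ into \eqref{succr}--\eqref{precr} and cancel the leftover skew terms by pairing them with an arbitrary element of $A$ and invoking \eqref{lem2} and \eqref{lem1} (the latter with $\xi,\eta$ swapped). The only cosmetic difference is the order in which the invariance identity and the skew-symmetry are applied within the computation of \eqref{lem1}--\eqref{lem2}.
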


\begin{proof}
Since the skew-symmetric part $a$ of $r$ is $(L_\succ,R_\prec)$-invariant, by Lemma \ref{inv}, we have
\begin{eqnarray*}
\langle \eta,a_{+}(\xi)\ast x \rangle+\langle \xi, x\succ_A a_{+}(\eta) \rangle
&=&\langle \eta, a_{+}(\xi)\ast x  \rangle+\langle L_{\succ_A}^*(x)\xi, a_{+}(\eta) \rangle\\
&=&\langle \eta, a_{+}(\xi)\ast x- a_{+}(L_{\succ_A}^*(x)\xi)\rangle\\
&=&0,
\end{eqnarray*}
and
\begin{eqnarray*}
\langle \eta, x\ast a_{+}(\xi) \rangle+\langle \xi, a_{+}(\eta)\prec_A x \rangle
&=&\langle \eta, x\ast a_{+}(\xi)  \rangle+\langle R_{\prec_A}^*(x)\xi, a_{+}(\eta) \rangle\\
&=&\langle \eta, x\ast a_{+}(\xi)-a_{+}(R_{\prec_A}^*(x)\xi) \rangle\\
&=&0,
\end{eqnarray*}
which implies that \eqref{lem1} and \eqref{lem2} hold.

Moreover, by \eqref{lem1} and \eqref{lem2}, we have
\begin{eqnarray*}
\huaR_A^* (a_+(\xi)) \eta+L_{\prec_A}^*(a_{+}(\eta))\xi &=& \langle \eta, x\ast a_{+}(\xi) \rangle+\langle \xi, a_{+}(\eta)\prec_A x \rangle=0,\\
\huaL_A^* (a_+(\eta)) \xi+R_{\succ_A}^*(a_{+}(\xi))\eta &=& \langle \xi,a_{+}(\eta)\ast x \rangle+\langle \eta, x\succ_A a_{+}(\xi) \rangle=0.
\end{eqnarray*}
Then by \eqref{succr} and \eqref{precr}, we have
\begin{eqnarray*}
\xi \prec_r \eta&=& \huaL_A^*(\Lambda_+(\eta))\xi+\huaL_A^*(a_+(\eta))\xi-R_{\succ_A}^*(\Lambda_+(\xi))\eta+R_{\succ_A}^*(a_+(\xi))\eta\\
&=&\huaL_A^* (\Lambda_+(\eta))\xi-R_{\succ_A}^*(\Lambda_+(\xi))\eta,\\
\xi \succ_r \eta&=& \huaR_A^* (\Lambda_+(\xi))\eta+\huaR_A^* (a_+(\xi))\eta-L_{\prec_A}^*(\Lambda_+(\eta))\xi+L_{\prec_A}^*(a_+(\eta))\xi\\
&=&\huaR_A^* (\Lambda_+(\xi))\eta-L_{\prec_A}^*(\Lambda_+(\eta))\xi,
\end{eqnarray*}
which implies that \eqref{lem3} and \eqref{lem4} hold.
\end{proof}

\begin{rmk}
By \eqref{lem1} and \eqref{lem2}, we also have
\begin{eqnarray}\label{lem5}
\langle  \xi,a_+(\eta)\succ_A x \rangle=\langle \eta,x\prec_A a_+(\xi)\rangle,\quad \forall x\in A,\xi,\eta\in A^*.
\end{eqnarray}
\end{rmk}

\begin{pro}\label{quasi-triangular}
Let $(A,\prec_A,\succ_A)$ be a dendriform algebra and $r=\sum_{i} a_i\otimes b_i \in A\otimes A.$ If $r$ satisfies the D-equation and the skew-symmetric part $a$ of $r$ is $(L_\succ,R_\prec)$-invariant, then $r$ satisfies \eqref{condition1}-\eqref{condition5}. Therefore, $(A,A_r^*)$ is a coboundary dendriform D-bialgebra.
\end{pro}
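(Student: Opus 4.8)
The plan is to verify the five tensor identities \eqref{condition1}--\eqref{condition5} and then quote Theorem \ref{coboundary dendriform}. The two hypotheses act on disjoint parts of these identities: the $(L_\succ,R_\prec)$-invariance of $a$ governs every term built out of $r-\sigma(r)$, while the D-equation governs the cubic three-tensor terms. I would first set up two dictionaries. Since $r-\sigma(r)=2a$ and $a_+=\tfrac12 I$ with $I$ as in \eqref{I2}, I translate each scalar pairing of a tensor identity into an identity between operators $A^*\to A$, using that contraction turns $(M\otimes\Id)$ and $(\Id\otimes N)$ into right-composition by $M^*$ and left-composition by $N$ on the associated operator. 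Under this dictionary the invariance of $a$ is precisely the four relations of Lemma \ref{inv}, i.e. $L_{\succ_A}(x)\circ I=I\circ\huaR_A^*(x)$, $R_{\prec_A}(x)\circ I=I\circ\huaL_A^*(x)$ and their transposes. The second dictionary is the operator form of the dendriform axioms for the regular representation $(A;L_{\succ_A},R_{\succ_A},L_{\prec_A},R_{\prec_A})$, e.g. $R_{\prec_A}(x)R_{\prec_A}(y)=R_{\prec_A}(y\ast x)$, $R_{\prec_A}(x)R_{\succ_A}(y)=R_{\succ_A}(y\prec_A x)$ and $L_{\succ_A}(x)R_{\succ_A}(y)=R_{\succ_A}(y)\huaL_A(x)$; these convert composites of structure maps into single ones.

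With these in place, \eqref{condition1} and \eqref{condition2} involve only $r-\sigma(r)=2a$ and should fall out of invariance alone. For \eqref{condition2} the operator translation is $L_{\succ_A}(y)\,I\,R_{\prec_A}^*(x)-L_{\succ_A}(y\prec_A x)\,I-I\,R_{\prec_A}^*(y\succ_A x)$; pushing $L_{\succ_A}(z)\circ I=I\circ\huaR_A^*(z)$ through and applying the two $R$-composition axioms collapses everything to $I\circ R_{\prec_A}^*\big(y\ast x-y\prec_A x-y\succ_A x\big)=0$ via $\ast=\prec_A+\succ_A$. For \eqref{condition1} the nested $Q$'s are reduced the same way; the decisive input is the axiom $L_{\succ_A}(x)R_{\succ_A}(y)=R_{\succ_A}(y)\huaL_A(x)$ together with the invariance relation $R_{\prec_A}(x)\circ I=I\circ\huaL_A^*(x)$, which forces the two operators that survive the reduction to coincide and cancel.

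The substance lies in \eqref{condition3}--\eqref{condition5}, which by the remark following Theorem \ref{coboundary dendriform} are the associativity conditions for $(A^*,\prec_r,\succ_r)$. Here the D-equation $r_{12}\ast r_{13}-r_{13}\prec r_{23}-r_{23}\succ r_{12}=0$ is visibly the bracketed three-tensor of \eqref{condition4}, so that block dies immediately. For all three I would rewrite the remaining bracketed three-tensors, such as $-r_{23}\ast r_{21}+r_{21}\prec r_{31}+r_{31}\succ r_{23}$ in \eqref{condition3} and $-r_{31}\ast r_{32}+r_{32}\prec r_{12}+r_{12}\succ r_{31}$ in \eqref{condition5}, as $\pm$ the D-equation tensor in a suitable placement of legs plus a remainder in which every summand carries a factor $r-\sigma(r)$. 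The leg permutations and the replacement of $r$ by $\sigma(r)=-a+\Lambda$ are tracked on the three defining products \eqref{DYBE}, writing $r=a+\Lambda$ so that the symmetric part drops out of antisymmetrized combinations and only $r-\sigma(r)=2a$ remains; the D-equation then annihilates the first piece and invariance the second.

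I expect the main obstacle to be the two explicit correction sums, $\sum_i (a_i\ast x)\otimes Q(b_i)(r-\sigma(r))-a_i\otimes\big(Q(x\succ_A b_i)(r-\sigma(r))\big)$ in \eqref{condition4} and its mirror in \eqref{condition5}. These couple $Q$ acting on $2a$, which invariance controls through $\big(Q(z)(2a)\big)_+=I\circ R_{\succ_A}^*(z)$, with the full tensor $r=\sum_i a_i\otimes b_i$, so the cancellation against the non-D-equation leftovers is not term-by-term. To force it, one must use the invariance relations $x\ast a_+(\xi)=a_+(R_{\prec_A}^*(x)\xi)$ and $a_+(\xi)\ast x=a_+(L_{\succ_A}^*(x)\xi)$ of \eqref{skewsymmetric invariance} to move the outer multiplications by $x$ and by $b_i$ inside $a_+$ before the $R$- and $L$-composition axioms can recombine the legs into the D-equation pattern. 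Once \eqref{condition1}--\eqref{condition5} are all established, Theorem \ref{coboundary dendriform} yields that $(A,A_r^*)$ is a coboundary dendriform D-bialgebra.
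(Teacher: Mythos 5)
Your handling of \eqref{condition1} and \eqref{condition2} is correct, and it is in substance the paper's own argument in operator rather than element form: the paper pairs against $\xi\otimes\eta$, uses \eqref{skewsymmetric invariance} to pull every structure map inside $a_+$, and finishes with the dendriform axioms. Your specific claims check out: \eqref{condition2} does collapse, after pushing $L_{\succ_A}(z)\circ I=I\circ \huaR_A^*(z)$ through, to $I\circ R_{\prec_A}^*\big(y\ast x-y\prec_A x-y\succ_A x\big)=0$; \eqref{condition1} does reduce, using both $L_{\succ_A}(x)\circ I=I\circ\huaR_A^*(x)$ and $R_{\prec_A}(x)\circ I=I\circ\huaL_A^*(x)$, to $I$ composed with the adjoint of $L_{\succ_A}(x)R_{\succ_A}(y)-R_{\succ_A}(y)\huaL_A(x)=0$; and your formula $\big(Q(z)(r-\sigma(r))\big)_+=I\circ R_{\succ_A}^*(z)$ is correct. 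One structural correction, however: the three-tensor brackets in \eqref{condition3} and \eqref{condition5} are \emph{not} ``the D-equation tensor in a suitable placement of legs plus a remainder carrying $r-\sigma(r)$.'' They are exactly $\sigma_{123}$, respectively $\sigma_{132}$, applied to $r_{12}\ast r_{13}-r_{13}\prec r_{23}-r_{23}\succ r_{12}$, because permuting legs turns $r_{12},r_{13},r_{23}$ into $r_{23},r_{21},r_{31}$ (resp.\ $r_{31},r_{32},r_{12}$) with no correction terms at all; this is the paper's one-line disposal of \eqref{condition3} and of the first blocks of \eqref{condition4}--\eqref{condition5}, and no decomposition $r=a+\Lambda$ enters there.

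The genuine gap is at the correction sums in \eqref{condition4} and \eqref{condition5}, which you explicitly flag as the main obstacle but never resolve. The mechanism you propose for them cannot work as described: since the D-equation block of \eqref{condition4} vanishes by itself (it is literally the D-equation tensor hit by an operator), the correction sum has nothing left to ``recombine'' with and must vanish on its own. What the paper does is an internal cancellation between the two halves of each sum: by invariance, $\big\langle Q(b_i)(r-\sigma(r)),\eta\otimes\zeta\big\rangle=-2\langle b_i\prec_A a_+(\eta),\zeta\rangle$ (and similarly with $x\succ_A b_i$ in place of $b_i$), and then, resumming over $i$ via $r_+(\xi)=\sum_i\langle a_i,\xi\rangle b_i$, the paper identifies $\sum_i\langle a_i\ast x,\xi\rangle\, b_i\prec_A a_+(\eta)$, i.e.\ $r_+\big(\huaR_A^*(x)\xi\big)\prec_A a_+(\eta)$, with $\big(x\succ_A r_+(\xi)\big)\prec_A a_+(\eta)$, so that the two halves coincide and cancel. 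That identification is the delicate heart of the whole proposition: it involves the full map $r_+=a_+{}+\Lambda_+$, whereas the hypothesis controls only $a_+$ (invariance gives $a_+\big(\huaR_A^*(x)\xi\big)=x\succ_A a_+(\xi)$, but says nothing relating $\Lambda_+\big(\huaR_A^*(x)\xi\big)$ to $x\succ_A\Lambda_+(\xi)$), so it cannot follow from your operator dictionary alone. Your proposal contains no step addressing this term, and until it is supplied the verification of \eqref{condition4}--\eqref{condition5} — and hence of the proposition — is missing.
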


\begin{proof}
Now we prove that \eqref{condition1}-\eqref{condition5} are satisfied.
Since $r\in A \otimes A$ satisfies the D-equation and the skew-symmetric part $a$ of $r$ is $(L_\succ,R_\prec)$-invariant, by \eqref{skewsymmetric invariance}, we have
\begin{eqnarray*}
&&  \langle (\sigma Q(x))Q(y)a, \xi\otimes \eta  \rangle\\
&=& \langle (L_{\succ_A}(x)\otimes 1-1\otimes R_{\prec_A}(x))(1\otimes L_{\succ_A}(y)-R_{\prec_A}(y)\otimes 1)a, \xi\otimes \eta \rangle\\
&=& \langle \Big( L_{\succ_A}(x)\otimes L_{\succ_A}(y)-1\otimes R_{\prec_A}(x)L_{\succ_A}(y)-L_{\succ_A}(x)R_{\prec_A}(y)\otimes 1+ R_{\prec_A}(y)\otimes R_{\prec_A}(x)\Big)a, \xi\otimes \eta \rangle  \\
&=& \langle a, L_{\succ_A}^*(x)\xi \otimes L_{\succ_A}^*(y)\eta \rangle- \langle a, \xi \otimes L_{\succ_A}^*(y)R_{\prec_A}^*(x)\eta \rangle-\langle a,R_{\prec_A}^*(y)L_{\succ_A}^*(x)\xi \otimes \eta \rangle+\langle a, R_{\prec_A}^*(y)\xi \otimes R_{\prec_A}^*(x) \eta \rangle\\
&=& \langle y\succ_A a_+(L_{\succ_A}^*(x)\xi), \eta \rangle-\langle R_{\prec_A}(x)L_{\succ_A}(y)(a_+(\xi)) ,\eta \rangle-\langle a_+(R_{\prec_A}^*(y)L_{\succ_A}^*(x)\xi),\eta \rangle+\langle a_+(R_{\prec_A}^*(y)\xi), R_{\prec_A}^*(x)\eta  \rangle \\
&=& \langle y\succ_A(a_+(\xi)\ast x),\eta \rangle-\langle (y\succ_A a_+(\xi))\prec x, \eta \rangle-\langle y \ast (a_+(\xi)\ast x),\eta \rangle+\langle (y\ast a_+(\xi))\prec_A x, \eta \rangle\\
&=& -\langle y \prec_A (a_+(\xi)\ast x),\eta \rangle+\langle (y\prec_A a_+(\xi))\prec_A x, \eta \rangle\\
&=& 0,
\end{eqnarray*}
which implies that \eqref{condition1} holds. By \eqref{skewsymmetric invariance}, we also have
\begin{eqnarray*}
&& \langle\Big(R_{\prec_A}(x)\otimes L_{\succ_A}(y)-\Id\otimes L_{\succ_A}(y\prec_A x)-R_{\prec_A}(y\succ_A x)\otimes \Id\Big)a, \xi\otimes \eta  \rangle\\
&=&  \langle  a, R_{\prec_A}^*(x)\xi \otimes L_{\succ_A}^*(y)\eta - \xi\otimes L_{\succ_A}^*(y\prec_A x)\eta  \rangle - R_{\prec_A}^*(y\succ_A x) \xi\otimes \eta \rangle\\
&=&  \langle  a_+(R_{\prec_A}^*(x)\xi), L_{\succ_A}^*(y)\eta \rangle- \langle a_+(\xi),L_{\succ_A}^*(y\prec_A x)\eta  \rangle-\langle a_+(R_{\prec_A}^*(y\succ_A x) \xi),\eta  \rangle  \\
&=&  \langle y\succ_A( x\ast  a_+(\xi)),\eta \rangle-\langle (y\prec_A x)\succ_A a_+(\xi),\eta \rangle-\langle (y\prec_A x)\ast a_+(\xi),\eta \rangle\\
&=&  \langle y\succ_A ( x \succ_A  a_+(\xi)),\eta \rangle+\langle y\succ_A( x\prec_A  a_+(\xi)),\eta \rangle-\langle (y\prec_A x)\succ_A  a_+(\xi),\eta \rangle\\
&&-\langle  (y\succ_A x)\succ_A  a_+(\xi),\eta \rangle -\langle (y\succ_A x)\prec_A  a_+(\xi),\eta \rangle \\
&=&  0,
\end{eqnarray*}
which implies that \eqref{condition2} holds.
Let $ \sigma_{123},\sigma_{132}:A\otimes A\otimes A\longrightarrow A\otimes A\otimes A$ be two maps given by
$$ \sigma_{123}(x\otimes y \otimes z)= z\otimes x\otimes y, \quad \sigma_{132}(x\otimes y \otimes z)= y\otimes z\otimes x,\quad \forall x,y,z\in A.    $$
Then we have
\begin{eqnarray*}
r_{23}\ast r_{21}-r_{21}\prec r_{31}-r_{31}\succ r_{23}&=&\sigma_{123}(r_{12}\ast r_{13}-r_{13}\prec r_{23}-r_{23}\succ r_{12});\\
r_{31}\ast r_{32}-r_{32}\prec r_{12}-r_{12}\succ r_{31}&=&\sigma_{132}(r_{12}\ast r_{13}-r_{13}\prec r_{23}-r_{23}\succ r_{12}).
\end{eqnarray*}
Therefore, if $r$ satisfies the D-equation, then \eqref{condition3} holds. Moreover, we also have
\begin{eqnarray*}
&&\sum_{i} \langle (a_i\ast x)\otimes Q(b_i)a-a_i \otimes Q(x\succ_A b_i)a, \xi \otimes \eta \otimes \zeta \rangle \\
&=&\sum_{i} \Big( \langle a_i\ast x,\xi\rangle \langle a, \eta\otimes L_{\succ_A}^*(b_i) \zeta-R_{\prec_A}^*(b_i)\eta\otimes \zeta \rangle- \langle  a_i,\xi \rangle \langle  a,\eta\otimes L_{\succ_A}^*(x\succ_A b_i) \zeta-R_{\prec_A}^*(x\succ_A b_i) \eta \otimes \zeta \rangle \Big)\\
&=&\sum_{i} \Big( \langle a_i\ast x,\xi\rangle \langle b_i\succ_A a_+(\eta)- b_i\ast a_+(\eta),\zeta \rangle-\langle  a_i,\xi \rangle \langle  (x\succ_A b_i)\succ_A a_+(\eta)-(x\succ_A b_i)\ast a_+(\eta),\zeta\rangle \Big) \\
&=&\sum_{i} \Big( \langle  a_i,\xi \rangle \langle (x\succ_A b_i)\prec_A a_+(\eta),\zeta \rangle-\langle a_i\ast x,\xi\rangle \langle b_i\prec_A a_+(\eta),\zeta \rangle \Big)\\
&=&\langle (x\succ_A r_+(\xi))\prec_A a_+(\eta),\zeta \rangle-\langle (x\succ_A r_+(\xi))\prec_A a_+(\eta),\zeta \rangle\\
&=& 0,
\end{eqnarray*}
which implies \eqref{condition4} holds.
Similarly, by \eqref{skewsymmetric invariance}, we have
\begin{eqnarray*}
&&\sum_{i} \langle Q(b_i)a \otimes (x\ast a_i)-Q(b_i\prec_A x)a \otimes a_i , \xi\otimes \eta\otimes \zeta \rangle\\
&=&\sum_{i} \Big( \langle b_i\succ_A a_+(\xi)-a_+(R_{\prec_A}^*(b_i)\xi),\eta \rangle \langle x\ast a_i,\zeta \rangle -\langle (b_i\prec_A x)\succ_A a_+(\xi)-a_+(R_{\prec_A}^*(b_i\prec_A x)\xi),\eta \rangle \langle a_i,\zeta  \rangle \Big) \\
&=&\sum_{i} \Big( \langle b_i\succ_A a_+(\xi)-b_i\ast a_+(\xi),\eta \rangle \langle x\ast a_i,\zeta \rangle -\langle (b_i\prec_A x)\succ_A a_+(\xi)-(b_i\prec_A x)\ast a_+(\xi),\eta \rangle \langle a_i,\zeta  \rangle \Big)\\
&=&\sum_{i} \Big( \langle (b_i\prec_A x)\prec_A a_+(\xi),\eta \rangle \langle a_i,\zeta  \rangle-\langle b_i\prec_A a_+(\xi),\eta \rangle \langle x\ast a_i,\zeta \rangle \Big) \\
&=&\langle (r_+(\zeta)\prec_A x)\prec_A a_+(\xi),\eta  \rangle-\langle (r_+(\zeta)\prec_A x)\prec_A a_+(\xi),\eta  \rangle\\
&=& 0,
\end{eqnarray*}
which implies \eqref{condition5} holds. Therefore, $(A,A_r^*)$ is a coboundary dendriform D-bialgebra by Theorem \ref{coboundary dendriform}.
\end{proof}

Based on the observation in Proposition \ref{quasi-triangular}, we introduce the notion of quasi-triangular dendriform D-bialgebras as a special class of coboundary dendriform D-bialgebras as follows.

\begin{defi}
Let $(A,\prec_A,\succ_A)$ be a dendriform algebra. If $r\in A \otimes A$ satisfies the D-equation and the skew-symmetric part $a$ of $r$ is $(L_\succ,R_\prec)$-invariant, then the dendriform D-bialgebra $(A,A_r^*)$ induced by $r$ is called a
{\bf quasi-triangular dendriform D-bialgebra}. Moreover, if $r$ is symmetric, $(A,A_r^*)$ is called a
{\bf triangular dendriform D-bialgebra}.
\end{defi}

\begin{thm}\label{heart2}
Assume that the skew-symmetric part $a$ of $r\in A\otimes A$ is $(L_\succ,R_\prec)$-invariant. Then
$r$ satisfies the D-equation $r_{12}\ast r_{13}-r_{13}\prec r_{23}-r_{23}\succ r_{12}=0$ if and only if
$(A^*,\prec_r,\succ_r)$ is a dendriform algebra and the linear maps $r_{+},r_{-}:(A^*,\prec_r,\succ_r)\longrightarrow (A,\prec_A,\succ_A)$ are both dendriform algebra homomorphisms.
\end{thm}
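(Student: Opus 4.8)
The plan is to translate both the $D$-equation and the four homomorphism conditions into identities among the maps $r_+,r_-\colon A^*\to A$, and to show that they are all governed by the single tensor $D(r):=r_{12}\ast r_{13}-r_{13}\prec r_{23}-r_{23}\succ r_{12}$ appearing in the $D$-equation. First I would record the bookkeeping: from $r=\sum_i a_i\otimes b_i$ one has $r_+(\xi)=\sum_i\langle a_i,\xi\rangle b_i$ and $r_-(\eta)=\sum_i\langle b_i,\eta\rangle a_i$, so that pairing $D(r)$ against $\xi\otimes\eta\otimes\zeta$ (the $k$-th slot against the $k$-th covector) gives the master formula
\[
\langle D(r),\xi\otimes\eta\otimes\zeta\rangle=\langle r_-(\eta)\ast r_-(\zeta),\xi\rangle-\langle r_+(\xi)\prec_A r_+(\eta),\zeta\rangle-\langle r_-(\zeta)\succ_A r_+(\xi),\eta\rangle.
\]
In parallel, expanding $\xi\prec_r\eta$ and $\xi\succ_r\eta$ via \eqref{succr}--\eqref{precr} and pairing $r_+(\xi\prec_r\eta),r_+(\xi\succ_r\eta)$ with $\zeta$ through $\langle r_+(\mu),\zeta\rangle=\langle\mu,r_-(\zeta)\rangle$ (and the $r_-$ analogues through $\langle r_-(\mu),\zeta\rangle=\langle\mu,r_+(\zeta)\rangle$) expresses the four homomorphism defects $r_\pm(\xi\prec_r\eta)-r_\pm(\xi)\prec_A r_\pm(\eta)$ and $r_\pm(\xi\succ_r\eta)-r_\pm(\xi)\succ_A r_\pm(\eta)$ purely in terms of products $r_\pm(\cdot)\,{\ast}\,r_\pm(\cdot)$.

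The key step is then to compare these expressions with the master formula, using that the skew-symmetric part $a$ of $r$ is $(L_\succ,R_\prec)$-invariant. By Lemma \ref{L-R inv} applied to $a$, i.e.\ to $I=2a_+=r_+-r_-$, the operator $I$ intertwines the actions, $x\ast I(\xi)=I(R_{\prec_A}^*(x)\xi)$ and $I(\xi)\ast x=I(L_{\succ_A}^*(x)\xi)$, equivalently $R_{\prec_A}(x)\circ I=I\circ\huaL_A^*(x)$ and $L_{\succ_A}(x)\circ I=I\circ\huaR_A^*(x)$ (Lemma \ref{inv}); together with $I^*=-I$ these let me trade an $r_+$ for an $r_-$ in a tensor slot at the cost of a term that the invariance cancels. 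Carrying this out for the $\prec_r$-defect of $r_+$ yields the clean identity
\[
\langle r_+(\xi\prec_r\eta)-r_+(\xi)\prec_A r_+(\eta),\zeta\rangle=\langle D(r),\xi\otimes\eta\otimes\zeta\rangle,
\]
and the same mechanism, with the appropriate slot conversions and index permutations, shows that each remaining defect equals a pairing of $D(r)$ or of its flip $D(\sigma(r))$. Since the invariance of $a$ ties $D(r)$ and $D(\sigma(r))$ together so that $D(r)=0\iff D(\sigma(r))=0$, the conclusion is: for $(L_\succ,R_\prec)$-invariant skew part, all four homomorphism defects vanish for every $\xi,\eta,\zeta$ if and only if $D(r)=0$.

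With these identities the theorem follows quickly. For the backward implication I need only one identity: if $r_+$ is a dendriform homomorphism then its $\prec_r$-defect vanishes, so the displayed identity forces $\langle D(r),\xi\otimes\eta\otimes\zeta\rangle=0$ for all $\xi,\eta,\zeta$, whence $D(r)=0$ by nondegeneracy of the pairing. For the forward implication, assume $D(r)=0$; then Proposition \ref{quasi-triangular}, which uses precisely the $D$-equation together with the invariance of $a$, shows $(A,A_r^*)$ is a coboundary dendriform D-bialgebra, so $(A^*,\prec_r,\succ_r)$ is a dendriform algebra, and the four defect identities give that $r_+$ and $r_-$ are both dendriform homomorphisms.

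The main obstacle is the middle step. While the $\prec_r$-defect of $r_+$ matches $D(r)$ almost on the nose, the $\succ_r$-defect of $r_+$ and both defects of $r_-$ do not match $D(r)$ term by term: their type, meaning which slots carry $r_+$ versus $r_-$, is different, so one is forced to insert $r_\pm=r_\mp\pm I$ and repeatedly absorb the resulting $I$'s through the intertwining relations, verifying that every correction term cancels. Organizing this without error is where the care lies; the flip $r\mapsto\sigma(r)$, which exchanges $r_+\leftrightarrow r_-$, helps halve the work by letting me deduce the $r_-$ identities from the $r_+$ ones, once it is observed that invariance renders $D(r)=0$ and $D(\sigma(r))=0$ equivalent.
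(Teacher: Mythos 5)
Your proposal is correct and follows essentially the same route as the paper's proof: both derive identities equating the homomorphism defects of $r_{\pm}$ with slot-permuted pairings of the tensor $r_{12}\ast r_{13}-r_{13}\prec r_{23}-r_{23}\succ r_{12}$ (using the $(L_\succ,R_\prec)$-invariance of $a$ via Lemma \ref{inv}), then invoke Proposition \ref{quasi-triangular} to get the dendriform structure on $A^*$ in the forward direction and nondegeneracy of the pairing for the converse. The paper organizes the cancellations through the decomposition $r=a+\Lambda$ and the reduced products \eqref{lem3}--\eqref{lem4} rather than your insertions of $r_{\pm}=r_{\mp}\pm I$ with intertwining, but this is only a difference of bookkeeping, not of method.
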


\begin{proof}
By Theorem \ref{coboundary dendriform} and Proposition \ref{quasi-triangular}, if the skew-symmetric part $a$ of $r$ is $(L_\succ,R_\prec)$-invariant and $r$ satisfies the D-equation, $(A,A^*_r)$ induced by $r$ is a quasi-triangular dendriform D-bialgebra, which implies that $(A^*,\prec_r,\succ_r)$ is a dendriform algebra. Next we show that the linear maps $r_{+},r_{-}$ are both dendriform algebra homomorphisms.

By \eqref{DYBE} and \eqref{lem1}-\eqref{lem2}, after a direct calculation, we have
\begin{eqnarray*}
&&\langle r_{12}\ast r_{13}-r_{13}\prec r_{23}-r_{23}\succ r_{12},\xi\otimes \eta\otimes \zeta\rangle\\
&=&\langle \xi,\Lambda_{+}(\eta)\ast\Lambda_{+}(\zeta) \rangle-\langle \zeta,\Lambda_{+}(\xi)\prec_A \Lambda_{+}(\eta) \rangle-\langle \eta,\Lambda_{+}(\zeta)\succ_A\Lambda_{+}(\xi) \rangle-\langle\zeta,a_{+}(\xi)\prec_A a_{+}(\eta)\rangle.
\end{eqnarray*}
Next, we show that the following equations hold.
\begin{eqnarray}
\label{equ-succ}\langle r_+(\xi \succ_r \eta)-r_+(\xi)\succ_A r_+(\eta),\zeta\rangle&=&\langle r_{12}\ast r_{13}-r_{13}\prec r_{23}-r_{23}\succ r_{12},\eta\otimes \zeta \otimes \xi \rangle;\\
\label{equ-prec}\langle r_+(\xi \prec_r \eta)-r_+(\xi)\prec_A r_+(\eta),\zeta\rangle&=&\langle r_{12}\ast r_{13}-r_{13}\prec r_{23}-r_{23}\succ r_{12},\xi \otimes \eta \otimes \zeta \rangle.
\end{eqnarray}
On the one hand, by \eqref{lem4} and \eqref{lem5}, we have
\begin{eqnarray*}
&&\langle r_+(\xi \succ_r \eta)-r_+(\xi)\succ_A r_+(\eta),\zeta\rangle\\
&=& \langle (a_+ +\Lambda_+)(\huaR_A^* (\Lambda_+(\xi))\eta-L_{\prec_A}^*(\Lambda_+(\eta))\xi),\zeta\rangle
-\langle (a_+ +\Lambda_+)(\xi)\succ_A (a_+ +\Lambda_+)(\eta),\zeta\rangle\\
&=& -\langle \eta, a_+(\zeta)\ast \Lambda_+(\xi) \rangle+\langle \eta, \Lambda_+(\zeta)\ast \Lambda_+(\xi) \rangle+\langle \xi, \Lambda_+(\eta)\prec_A a_+(\zeta) \rangle-\langle \xi,\Lambda_+(\eta)\prec_A \Lambda_+(\zeta) \rangle\\
&&-\langle \zeta,a_+(\xi)\succ_A a_+(\eta)   \rangle-\langle \zeta,a_+(\xi) \succ_A \Lambda_+(\eta) \rangle-\langle \zeta, \Lambda_+(\xi) \succ_A a_+(\eta) \rangle-\langle \zeta, \Lambda_+(\xi) \succ_A \Lambda_+(\eta)\rangle\\
&=& \langle \eta, \Lambda_+(\zeta)\ast \Lambda_+(\xi) \rangle-\langle \xi,\Lambda_+(\eta)\prec_A \Lambda_+(\zeta) \rangle-\langle \zeta,a_+(\xi)\succ_A a_+(\eta)   \rangle-\langle \zeta, \Lambda_+(\xi) \succ_A \Lambda_+(\eta)\rangle\\
&=& \langle \eta, \Lambda_+(\zeta)\ast \Lambda_+(\xi) \rangle-\langle \xi,\Lambda_+(\eta)\prec_A \Lambda_+(\zeta) \rangle-\langle \xi,a_+(\eta)\prec_A a_+(\zeta) \rangle-\langle \zeta, \Lambda_+(\xi) \succ_A \Lambda_+(\eta)\rangle,
\end{eqnarray*}
which implies that \eqref{equ-succ} holds.

On the other hand, by \eqref{lem3} and \eqref{lem5}, we have
\begin{eqnarray*}
&&\langle r_+(\xi \prec_r \eta)-r_+(\xi)\prec_A r_+(\eta),\zeta\rangle\\
&=& \langle (a_+ +\Lambda_+)(\huaL_A^* (\Lambda_+(\eta))\xi-R_{\succ_A}^*(\Lambda_+(\xi))\eta),\zeta\rangle
-\langle (a_+ +\Lambda_+)(\xi)\prec_A (a_+ +\Lambda_+)(\eta),\zeta\rangle\\
&=&-\langle \xi,\Lambda_+(\eta)\ast a_+(\zeta) \rangle+\langle \xi,\Lambda_+(\eta)\ast \Lambda_+(\zeta) \rangle+\langle \eta,a_+(\zeta)\succ_A \Lambda_+(\xi)\rangle-\langle \eta,\Lambda_+(\zeta)\succ_A \Lambda_+(\xi)  \rangle\\
&&-\langle \zeta,a_+(\xi)\prec_A a_+(\eta) \rangle-\langle \zeta,a_+(\xi)\prec_A \Lambda_+(\eta) \rangle-\langle  \zeta,\Lambda_+(\xi)\prec_A a_+(\eta) \rangle-\langle \zeta,\Lambda_+(\xi)\prec_A \Lambda_+(\eta) \rangle\\
&=& \langle \xi,\Lambda_+(\eta)\ast \Lambda_+(\zeta) \rangle-\langle \eta,\Lambda_+(\zeta)\succ_A \Lambda_+(\xi)  \rangle-\langle \zeta,a_+(\xi)\prec_A a_+(\eta) \rangle-\langle \zeta,\Lambda_+(\xi)\prec_A \Lambda_+(\eta) \rangle,
\end{eqnarray*}
which implies that \eqref{equ-prec} holds. Thus $r_+$ is a dendriform algebra homomorphism. Similarly, we can also prove that $r_-$ is a dendriform algebra homomorphism.

Conversely, if $r_+$ is a dendriform algebra homomorphism, by \eqref{equ-succ} and \eqref{equ-prec}, then $r$ satisfies the D-equation $r_{12}\ast r_{13}-r_{13}\prec r_{23}-r_{23}\succ r_{12}=0$.
\end{proof}

\subsection{Factorizable dendriform D-bialgebras}

In this subsection, we introduce the notion of a factorizable dendriform D-bialgebra, which is a special quasi-triangular dendriform D-bialgebra.
Factorizable dendriform D-bialgebras are the case when the map $I:
A^*\to A$ is nondegenerate.

\begin{defi}
A quasi-triangular dendriform D-bialgebra $(A,A_{r}^{\ast})$ is called {\bf factorizable} if the skew-symmetric part $a$ of $r$ is nondegenerate, which means that the linear map $I:A^{\ast} \longrightarrow A$ defined in \eqref{I2} is a linear isomorphism of vector spaces.
\end{defi}

Consider the map
$$
A^*\stackrel{r_+\oplus r_-}{\longrightarrow}A\oplus A\stackrel{(x,y)\longmapsto x-y}{\longrightarrow}A.
$$
The following result justifies the terminology of a factorizable dendriform D-bialgebra.
\begin{pro}
  Let  $(A,A_{r}^{\ast})$ be a factorizable dendriform D-bialgebra. Then $\Img(r_+\oplus r_-)$ is a dendriform subalgebra of the direct sum dendriform algebra $A\oplus A$, which is isomorphic to the dendriform algebra $(A^*,\prec_r,\succ_r).$ Moreover, any $x\in A$ has a unique decomposition
  \begin{equation}
    x=x_+-x_-,
  \end{equation}
  where $(x_+,x_-)\in \Img(r_+\oplus r_-)$.
\end{pro}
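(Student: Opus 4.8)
The plan is to build everything from the map $\sigma := r_+ \oplus r_- : A^* \to A \oplus A$, $\sigma(\xi) = (r_+(\xi), r_-(\xi))$, using the two facts established earlier in the excerpt. The first fact is Theorem \ref{heart2}: since the skew-symmetric part $a$ of $r$ is $(L_\succ,R_\prec)$-invariant and $r$ satisfies the D-equation (both hold because $(A,A_r^*)$ is quasi-triangular), both $r_+$ and $r_-$ are dendriform algebra homomorphisms from $(A^*,\prec_r,\succ_r)$ into $(A,\prec_A,\succ_A)$. The second fact is the definition of factorizability: $I = r_+ - r_-$ is a linear isomorphism $A^* \to A$.

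First I would check that $r_+ \oplus r_-$ is a homomorphism of dendriform algebras into the direct sum $A \oplus A$ (with componentwise operations $\prec_{A\oplus A}, \succ_{A\oplus A}$). This is immediate: a map into a direct sum is a homomorphism precisely when each component is, and both components are homomorphisms by Theorem \ref{heart2}. Consequently $\Img(r_+\oplus r_-)$ is the homomorphic image of a dendriform algebra, hence a dendriform subalgebra of $A\oplus A$.

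Next I would show $r_+\oplus r_-$ is injective, which gives the claimed isomorphism onto its image. Suppose $(r_+\oplus r_-)(\xi) = 0$, i.e. $r_+(\xi)=0$ and $r_-(\xi)=0$. Then $I(\xi) = r_+(\xi) - r_-(\xi) = 0$, and since $I$ is an isomorphism by factorizability, $\xi = 0$. Thus $r_+\oplus r_-$ is an injective dendriform homomorphism, so it restricts to an isomorphism $(A^*,\prec_r,\succ_r) \xrightarrow{\ \sim\ } \Img(r_+\oplus r_-)$.

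Finally, for the decomposition statement, given $x\in A$, set $\xi := I^{-1}(x) \in A^*$, which exists and is unique because $I$ is an isomorphism; put $x_+ := r_+(\xi)$ and $x_- := r_-(\xi)$. Then $(x_+,x_-) = (r_+\oplus r_-)(\xi) \in \Img(r_+\oplus r_-)$ and $x_+ - x_- = I(\xi) = I(I^{-1}(x)) = x$, giving existence. For uniqueness, if $x = y_+ - y_-$ with $(y_+,y_-) = (r_+\oplus r_-)(\eta)$ for some $\eta\in A^*$, then $I(\eta) = y_+ - y_- = x = I(\xi)$, and injectivity of $I$ forces $\eta = \xi$, whence $(y_+,y_-) = (x_+,x_-)$. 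The only thing to watch is the internal bookkeeping between $I$ and the pair $(r_+,r_-)$; there is no genuine obstacle, as both the subalgebra structure and the bijectivity reduce entirely to Theorem \ref{heart2} and the nondegeneracy of $I$, so the proof is essentially a formal assembly of these two inputs.
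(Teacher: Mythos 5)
Your proposal is correct and follows essentially the same route as the paper: Theorem \ref{heart2} gives that $r_+$ and $r_-$ are dendriform homomorphisms (hence the image is a subalgebra), and the nondegeneracy of $I=r_+-r_-$ yields both the isomorphism onto the image and the existence and uniqueness of the decomposition $x=r_+I^{-1}(x)-r_-I^{-1}(x)$. The only difference is that you spell out the injectivity and uniqueness arguments that the paper leaves implicit, which is fine.
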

\begin{proof}
By Theorem \ref{heart2}, both $r_+$ and $r_-$ are dendriform algebra homomorphisms from the dendriform algebra $(A^*,\prec_r,\succ_r)$ to $(A,\prec_A,\succ_A)$. Therefore, $\Img(r_+\oplus r_-)$ is a dendriform subalgebra of the direct sum dendriform algebra $A \oplus A$. Since $I=r_+  -r_-:A^*\to A$ is nondegenerate, it follows that the dendriform algebra $\Img(r_+\oplus r_-)$ is   isomorphic to the dendriform algebra $(A^*,\prec_r,\succ_r).$

  Since $I:A^*\to A$ is nondegenerate, we have
  $$
 r_+ I^{-1}(x)-r_-I^{-1}(x)= (r_+  -r_-)I^{-1}(x)=x,
  $$
  which implies that $x=x_+-x_-,$ where $x_+=r_+ I^{-1}(x)$ and $x_-=r_-I^{-1}(x)$. The uniqueness also follows from the fact that $I:A^*\to A$ is nondegenerate.
\end{proof}

Let $(A,A^*)$ be an arbitrary dendriform D-bialgebra. Define multiplications $\prec_{\frkd}$ and $\succ_{\frkd}$ on $\frkd=A\oplus A^*$ by
\begin{eqnarray*}
(x,\xi)\succ_{\frkd}(y,\eta)
&=& \Big(x\succ_A y-L_{\prec_{A^*}}^*(\eta)x+(R_{\prec_{A^*}}^*+R_{\succ_{A^*}}^*)(\xi)y,\xi \succ_{A^*} \eta+(R_{\prec_A}^*+R_{\succ_A}^*)(x)\eta-L_{\prec_A}^*(y)\xi \Big);\\
(x,\xi)\prec_{\frkd}(y,\eta)
&=& \Big(x\prec_A y+(L_{\prec_{A^*}}^*+L_{\succ_{A^*}}^*)(\eta)x-R_{\succ_{A^*}}^*(\xi)y,\xi \prec_{A^*} \eta-R_{\succ_A}^*(x)\eta+(L_{\prec_A}^*+L_{\succ_A}^*)(y)\xi \Big),
\end{eqnarray*}
for all $x,y\in A, \xi,\eta\in A^*.$ Then $(\frkd,\prec_{\frkd},\succ_{\frkd})$ is a dendriform algebra \cite{Bai}, which is called the {\bf dendriform double} of the dendriform D-bialgebra $(A,A^*)$.

Let $\{ e_1,e_2,...,e_n \}$ be a basis of $A$ and $\{ e_1^{\ast},e_2^{\ast},...,e_n^{\ast} \}$ be the dual basis of $A^{\ast}$. Then $r=\sum_{i} e_i\otimes e_i^{\ast}\in A\otimes A^{\ast} \subset \frkd \otimes \frkd$  induces a dendriform algebra structure on $\frkd^{\ast}$ such that $(\frkd,\frkd^{\ast}_r)$ is a coboundary dendriform D-bialgebra \cite{Bai}, where the dendriform multiplications $\prec_{\frkd^{\ast}_r},\succ_{\frkd^{\ast}_r}$ on $\frkd^{\ast}_r$ are given by
\begin{eqnarray}
\label{frkdast1}(\xi,x)\prec_{\frkd^{\ast}_r}(\eta,y)&=&(\xi\prec_{A^{\ast}}\eta,x\prec_{A} y),\\
\label{frkdast2}(\xi,x)\succ_{\frkd^{\ast}_r}(\eta,y)&=&(\xi\succ_{A^{\ast}}\eta,x\succ_{A} y), \quad \forall x,y\in A, \xi,\eta\in A^{\ast}.
\end{eqnarray}

\begin{thm}\label{dendriform double}
The dendriform D-bialgebra $(\frkd,\frkd^{\ast}_r)$ is a quasi-triangular dendriform D-bialgebra. Moreover, it is factorizable.
\end{thm}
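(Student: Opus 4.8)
The plan is to verify, for the canonical $2$-tensor $r=\sum_i e_i\otimes e_i^{\ast}\in A\otimes A^{\ast}\subset\frkd\otimes\frkd$, that its skew-symmetric part is $(L_\succ,R_\prec)$-invariant and that $r$ solves the D-equation; quasi-triangularity then follows, and nondegeneracy of $I$ upgrades it to factorizability. I identify $\frkd^{\ast}=A^{\ast}\oplus A$ with the pairing $\langle(\phi,c),(y,\eta)\rangle=\langle\phi,y\rangle+\langle\eta,c\rangle$ for $(\phi,c)\in\frkd^{\ast}$, $(y,\eta)\in\frkd$. A short pairing computation from the definition of $r_+,r_-$ gives $r_+(\phi,c)=(0,\phi)$ and $r_-(\phi,c)=(c,0)$, so that $I=r_+-r_-$ is the map $(\phi,c)\mapsto(-c,\phi)$. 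This is patently a linear isomorphism (with inverse $(y,\eta)\mapsto(\eta,-y)$), which will settle factorizability once the quasi-triangular structure is in place. The same formulas display $r_+=\iota_{A^{\ast}}\circ\pr_{A^{\ast}}$ and $r_-=\iota_A\circ\pr_A$, where $\pr_{A^{\ast}},\pr_A$ are the projections of $\frkd^{\ast}_r$ onto its summands and $\iota_{A^{\ast}},\iota_A$ the inclusions of $A^{\ast},A$ into $\frkd$.

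For the D-equation I would use Theorem \ref{heart2} instead of expanding the triple tensor product: granting the invariance of the skew part, it suffices to show that $r_+$ is a dendriform algebra homomorphism from $\frkd^{\ast}_r$ to $\frkd$. A direct inspection of $\prec_{\frkd},\succ_{\frkd}$ shows that $A$ and $A^{\ast}$ are dendriform subalgebras of $\frkd$, so $\iota_{A^{\ast}}$ and $\iota_A$ are homomorphisms; and by \eqref{frkdast1}--\eqref{frkdast2} the algebra $\frkd^{\ast}_r$ is the direct sum $A^{\ast}\oplus A$, so $\pr_{A^{\ast}}$ and $\pr_A$ are homomorphisms. Hence $r_+$ (and likewise $r_-$) is a composite of homomorphisms, and Theorem \ref{heart2} yields the D-equation.

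The computational heart is therefore the $(L_\succ,R_\prec)$-invariance of the skew part $a$. By Lemma \ref{inv} this is equivalent to the operator identities $R_{\prec_{\frkd}}(X)\circ I=I\circ\huaL_{\frkd}^{\ast}(X)$ and $L_{\succ_{\frkd}}(X)\circ I=I\circ\huaR_{\frkd}^{\ast}(X)$ for all $X=(x,\xi)\in\frkd$. Using $I(\phi,c)=(-c,\phi)$, the left-hand sides are read off directly from $\prec_{\frkd},\succ_{\frkd}$; for the right-hand sides I would first expand $X\ast_{\frkd}Y$ and then dualize to express $\huaL_{\frkd}^{\ast}(X)$ and $\huaR_{\frkd}^{\ast}(X)$ through the one-sided actions of $A$ and $A^{\ast}$ on each other. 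Separating the $A$- and $A^{\ast}$-components of each identity then reduces it to a matching of terms built from $\huaL_A^{\ast},R_{\succ_A}^{\ast},\huaL_{A^{\ast}}^{\ast},R_{\succ_{A^{\ast}}}^{\ast}$ and their partners; these match by the definitions of the (co)regular actions and their adjoints, the bialgebra compatibility already being encoded in the double's multiplication. The main obstacle is purely organizational: keeping the one-sided operators on $A$ and $A^{\ast}$ and their duals straight through the dualization so that the two sides visibly coincide. With invariance in hand, the previous paragraph gives the D-equation, so $(\frkd,\frkd^{\ast}_r)$ is quasi-triangular; since $I$ is nondegenerate, it is factorizable.
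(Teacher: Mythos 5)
Your proposal is correct, and its computational core coincides with the paper's proof: both compute $r_+(\xi,x)=(0,\xi)$ and $r_-(\xi,x)=(x,0)$, so that $I(\xi,x)=(-x,\xi)$ is visibly invertible, and both establish the $(L_\succ,R_\prec)$-invariance of the skew-symmetric part by the same direct dualization computation in $\frkd$, invoking Lemma \ref{inv} (the paper writes this computation out in full, obtaining $a_+\big(R^*_{\prec_\frkd}(x,\xi)(\eta,y)\big)=(x,\xi)\ast_\frkd a_+(\eta,y)$ and its companion; you only outline it, but your outline is exactly the computation the paper executes). The genuine difference is how the D-equation enters. The paper never verifies it inside the proof: it is absorbed into the citation of \cite{Bai} in the paragraph preceding the theorem, which asserts that the canonical $r$ makes $(\frkd,\frkd^{\ast}_r)$ a coboundary dendriform D-bialgebra with multiplications \eqref{frkdast1}--\eqref{frkdast2}, Bai's double construction including the fact that $r$ solves the D-equation. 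You instead derive the D-equation internally from Theorem \ref{heart2}: once invariance is known, it suffices that $r_+$ be a dendriform homomorphism from $\frkd^{\ast}_r$ to $\frkd$, and this is purely structural --- by \eqref{frkdast1}--\eqref{frkdast2} the algebra $\frkd^{\ast}_r$ is the direct sum dendriform algebra $A^{\ast}\oplus A$, while $A$ and $A^{\ast}$ are dendriform subalgebras of $\frkd$, so $r_{\pm}$ are composites of projections with inclusions. This is sound: the converse direction of Theorem \ref{heart2} needs only the invariance plus the homomorphism property of $r_+$, and you correctly order the steps so that invariance is established before the theorem is invoked. What your route buys is self-containedness within the paper's own results --- no appeal to Bai's D-equation statement, only to the identification of the $r$-induced multiplication with \eqref{frkdast1}--\eqref{frkdast2}, which both you and the paper take from \cite{Bai}. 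The paper's route is shorter on the page; yours replaces an external citation (or, if one distrusts it, a triple-tensor computation) with a two-line structural argument.
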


\begin{proof}
We first prove that the skew-symmetric part $a=\frac{1}{2} \sum_{i} (e_i\otimes e_i^{\ast}-e_i^{\ast}\otimes e_i) $ of $r$ is $(\widetilde{L_{\succ}},\widetilde{R_{\prec}})$-invariant, where $\widetilde{L_{\succ}},\widetilde{R_{\prec}}$ are the left and right multiplication operators of the dendriform algebra $(\frkd,\cdot_\frkd)$ respectively.  For all  $(\xi, x)\in \frkd^{\ast}$, we have $a_{+}(\xi, x)=\frac{1}{2} (-x,\xi)\in \frkd.$ Furthermore, by a direct calculation,  we have
\begin{eqnarray*}
(x,\xi)\ast_{\frkd}a_{+}(\eta,y)
&=& (x,\xi)\prec_{\frkd}a_{+}(\eta,y)+(x,\xi)\succ_{\frkd}a_{+}(\eta,y)\\
&=& \frac{1}{2} \Big((x,\xi)\prec_{\frkd}(-y,\eta)+(x,\xi)\succ_{\frkd}(-y,\eta) \Big)\\
&=& \frac{1}{2}\Big(-x\ast_A y+L^*_{\succ_{A^*}}(\eta)x-R^*_{\prec_{A^*}}(\xi)y, \xi \ast_{A^*} \eta +R^*_{\prec_A} (x)\eta-L^*_{\succ_A}(y)\xi \Big),
\end{eqnarray*}
and
\begin{eqnarray*}
R^*_{\prec_\frkd} (x,\xi)(\eta,y)=(\xi \ast_{A^*} \eta +R^*_{\prec_A}(x)\eta-L^*_{\succ_A}(y)\xi,x\ast_{A} y-L^*_{\succ_{A^*}}(\eta)x+R^*_{\prec_{A^*}}(\xi)y).
\end{eqnarray*}
Thus we have
$$a_+ \Big(R^*_{\prec_\frkd} (x,\xi)(\eta,y)\Big)-(x,\xi)\ast_{\frkd}a_{+}(\eta,y)=0.$$
Similarly, we also have
$$a_+ \Big(L^*_{\succ_\frkd} (x,\xi)(\eta,y)\Big)-a_{+}(\eta,y)\ast_{\frkd}(x,\xi)=0.$$
By Lemma \ref{inv},  the skew-symmetric part $a$ of $r$ is  $(\widetilde{L_{\succ}},\widetilde{R_{\prec}})$-invariant. Thus the dendriform D-bialgebra $(\frkd,\frkd^{\ast}_r)$ is a quasi-triangular dendriform D-bialgebra.

Moreover, note that $r_{+},r_{-}:\frkd^{\ast} \longrightarrow \frkd$ are given by
$$ r_{+}(\xi,x)=(0,\xi),\quad r_{-}(\xi,x)=(x,0),\quad \forall x\in A, \xi\in A^{\ast}.$$
This implies that $I(\xi,x)=(-x,\xi)$, which means that the linear map $I:\frkd^{\ast} \longrightarrow \frkd$ is a linear isomorphism of vector spaces. Thus, $(\frkd,\frkd^{\ast}_r)$ is a factorizable dendriform D-bialgebra.
\end{proof}

\section{Quasi-triangular dendriform D-bialgebras and relative Rota-Baxter operators of nonzero weights    }\label{nonzero-rRBo}

\begin{defi}\label{action}
Let $(A,\prec_A,\succ_A)$ and $(B,\prec_B,\succ_B)$ be two dendriform algebras. Then $(l_{\succ},r_{\succ},l_{\prec},r_{\prec})$ is called {\bf an action} of $A$ on $B$, if $(B;l_{\succ},r_{\succ},l_{\prec},r_{\prec})$ is a representation of the dendriform algebra $(A,\prec_A,\succ_A)$ and four linear maps $l_{\succ},r_{\succ},l_{\prec},r_{\prec}$ satisfy the following equations additionally:
\begin{equation*}
\left\{ \begin{aligned}
r_\prec(x)(u \succ_B v)&=u \succ_B (r_\prec(x)v);&~~ (l_\succ(x)u)\prec_B v&=l_\succ(x)(u \prec_B v);&~~ (r_\succ(x)u)\prec_B v&=u\succ_B (l_\prec(x) v);\\
r_\prec(x)(u \prec_B v)&=u \prec_B (r_\ast(x)v);&~~ (l_\prec(x)u)\prec_B v&=l_\prec(x)(u \ast_B v);&~~ (r_\prec(x)u)\prec_B v&=u\prec_B (l_\ast(x) v);\\
l_\succ(x)(u\succ_B v)&=(l_\ast(x)u)\succ_B v;&~~ u\succ_B (l_\succ(x)v)&=(r_\ast(x)u)\succ_B v;&~~ u\succ_B (r_\succ(x)v)&=r_\succ(x)(u \ast_B v),
\end{aligned}\right.
\end{equation*}
for all $x\in A, u,v \in B,$ where $l_{\ast}=l_{\prec}+l_{\succ},~ r_{\ast}=r_{\prec}+r_{\succ}$ and $u \ast_B v=u\prec_B v+u\succ_B v.$
\end{defi}

The notion of Rota-Baxter operators on associative algebras was introduced in \cite{Baxter,Rota} and have various applications \cite{BBGN,CK,GK,Yu-Guo}. See also the book \cite{Guo} for more details. Next we recall the notion of (relative) Rota-Baxter operators of nonzero weights on dendriform algebras.

\begin{defi}{\rm(\cite{BGM})}\label{rRBo}
\begin{itemize}
\item[{\rm(i)}] A linear map $P:A \rightarrow A$ is called a {\bf Rota-Baxter operator of weight $\lambda$} on a dendriform algebra  $(A,\prec_A,\succ_A)$ if
\begin{eqnarray*}
P(x)\prec_A P(y)&=&P(P(x)\prec_A y+x \prec_A P(y)+\lambda x \prec_A y);\\
P(x)\succ_A P(y)&=&P(P(x)\succ_A y+x \succ_A P(y)+\lambda x \succ_A y),\quad  \forall x,y\in A.
\end{eqnarray*}
A {\bf Rota-Baxter dendriform algebra of weight $\lambda$} is a dendriform algebra equipped with a Rota-Baxter operator of weight $\lambda$.
\item[{\rm(ii)}] Let $(A,\prec_A,\succ_A),(B,\prec_B,\succ_B)$ be two dendriform algebras and $(l_{\succ},r_{\succ},l_{\prec},r_{\prec})$ is an action of $A$ on $B$. Then a linear map $T:B\to A$ is called a {\bf relative Rota-Baxter operator of weight $\lambda$} with respect to the action $(l_{\succ},r_{\succ},l_{\prec},r_{\prec})$ of $A$ on $B$, if $T$ satisfies the following equations:
\begin{eqnarray}
\label{rRB1} (Tx)\succ_A(Ty)&=&T(l_\succ(Tx)y+r_\succ(Ty)x+\lambda x\succ_B y);\\
\label{rRB2} (Tx)\prec_A(Ty)&=&T(l_\prec(Tx)y+r_\prec(Ty)x+\lambda x\prec_B y),\quad \forall x,y\in B.
\end{eqnarray}
\end{itemize}
\end{defi}
Note that a Rota-Baxter operator on a dendriform algebra is a relative Rota-Baxter operator with respect to the regular representation.

By $I=r_{+}- r_{-}$, dendriform algebra multiplications $\prec_r, \succ_r$ on $A^*$ given by \eqref{succr} and \eqref{precr} reduce to
\begin{eqnarray}
\label{succr2}\xi \succ_r \eta
&=&\huaR_A^* (r_+(\xi))\eta-L_{\prec_A}^*(r_{+}(\eta))\xi+L_{\prec_A}^*(I\eta)\xi;\\
\label{precr2}\xi \prec_r \eta
&=&\huaL_A^* (r_+(\eta)) \xi-R_{\succ_A}^*(r_{+}(\xi))\eta+R_{\succ_A}^*(I\xi)\eta,
\quad \forall \xi,\eta\in A^*.
\end{eqnarray}

Now we define new multiplications $\prec_+,\succ_+$ on $A^{\ast}$ as follows:
\begin{eqnarray}
\label{newproduct1}\xi \succ_+ \eta&=&L_{\prec_A}^*(I\eta)\xi;\\
\label{newproduct2}\xi \prec_+ \eta&=&R_{\succ_A}^*(I\xi)\eta, \quad \forall \xi,\eta\in A^{\ast}.
\end{eqnarray}

\begin{pro}\label{new-dendriform}
$(A^{\ast},\prec_+,\succ_+)$ is a dendriform algebra.
\end{pro}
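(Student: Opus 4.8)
The plan is to verify the three defining identities of a dendriform algebra for $(A^*,\prec_+,\succ_+)$ directly, by pairing each side of an identity with an arbitrary element $z\in A$ and unwinding the products \eqref{newproduct1}--\eqref{newproduct2} through the dual pairing. The only structural input I expect to need is the $(L_\succ,R_\prec)$-invariance of the skew-symmetric part $a$ of $r$ (via Lemma \ref{inv}); the D-equation should not be required. First I would record the two elementary unwindings $\langle \xi\succ_+\eta,w\rangle=\langle \xi,(I\eta)\prec_A w\rangle$ and $\langle \xi\prec_+\eta,w\rangle=\langle \eta,w\succ_A(I\xi)\rangle$, which follow immediately from \eqref{newproduct1}--\eqref{newproduct2} and the definitions of $L_{\prec_A}^*$, $R_{\succ_A}^*$.

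The workhorse of the computation is the pair \eqref{lem1}--\eqref{lem2}. Since $a_+=\tfrac12 I$, these say precisely that
\[
\langle \xi, w\succ_A I\eta\rangle=-\langle \eta,(I\xi)\ast w\rangle,\qquad
\langle \xi,(I\eta)\prec_A w\rangle=-\langle \eta, w\ast(I\xi)\rangle,
\qquad\forall\,w\in A,\ \xi,\eta\in A^*.
\]
They are exactly the content of Lemma \ref{inv} combined with $I^*=-I$, namely the invariance relations $R_{\prec_A}(x)\circ I=I\circ\huaL_A^*(x)$ and $L_{\succ_A}(x)\circ I=I\circ\huaR_A^*(x)$. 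Their role is to let me slide the operator $I$ across the pairing, at the cost of interchanging the two dual arguments and turning a $\succ_A$ (resp.\ a $\prec_A$) into the sub-adjacent associative product $\ast$.

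With these in hand I would compute each axiom. Writing $a=I\xi$, $b=I\eta$, $c=I\zeta$, each side of an axiom first produces an expression involving two of the three dual vectors; the decisive step is to apply \eqref{lem1}--\eqref{lem2} to the \emph{outer} occurrence of $I$ (the one coming from the iterated product), which converts the leftover pairing against the third dual vector into a pairing against a single fixed one. After this conversion the expressions collapse, and I expect the following reductions: the left identity $(\xi\prec_+\eta)\prec_+\zeta=\xi\prec_+(\eta\prec_+\zeta+\eta\succ_+\zeta)$ reduces to $(c\ast z)\succ_A a=c\succ_A(z\succ_A a)$, i.e.\ the third dendriform axiom of $A$; the middle identity $(\xi\succ_+\eta)\prec_+\zeta=\xi\succ_+(\eta\prec_+\zeta)$ reduces to $(c\ast z)\ast a=c\ast(z\ast a)$, i.e.\ associativity of $\ast$; and the right identity $\xi\succ_+(\eta\succ_+\zeta)=(\xi\prec_+\eta+\xi\succ_+\eta)\succ_+\zeta$ reduces to $(c\prec_A z)\prec_A a=c\prec_A(z\ast a)$, i.e.\ the first dendriform axiom of $A$. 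Since all three target laws hold in $(A,\prec_A,\succ_A)$, the three axioms for $(A^*,\prec_+,\succ_+)$ follow.

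The main obstacle is bookkeeping rather than conceptual: a naive expansion produces terms paired against three different dual vectors $\xi,\eta,\zeta$, and it is not visible at the outset that they combine. The crucial realization is that each application of \eqref{lem1} or \eqref{lem2} simultaneously relocates $I$ and swaps the two dual arguments, so that after the correct sequence of applications every side is written as a single pairing, say $\langle\eta,\cdot\rangle$, whose argument is built only from $a,z,c\in A$; only then does a dendriform or associativity law in $A$ close the computation. Care is needed to apply the two identities to the right factor (inner versus outer $I$) and to split each $\ast$ as $\prec_A+\succ_A$ at the correct moment, so that the surviving $\succ_A$ and $\prec_A$ cross-terms cancel exactly against the $\ast$-expansion.
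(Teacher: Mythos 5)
Your proposal is correct and follows essentially the same route as the paper: both proofs pair each dendriform axiom with an element of $A$, slide $I$ across the pairing via the $(L_\succ,R_\prec)$-invariance of $a$ (Lemma \ref{inv}, equivalently \eqref{lem1}--\eqref{lem2}), and reduce to identities of $(A,\prec_A,\succ_A)$, with the D-equation playing no role. The only immaterial difference is bookkeeping: you systematically pair against $\eta$, so your middle and left identities terminate in associativity of $\ast$ and the third dendriform axiom, whereas the paper's computations terminate in the first and second dendriform axioms; both sets of reductions are valid.
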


\begin{proof}
By Lemma \ref{inv}, for all $x\in A,~~ \xi,\eta,\theta\in A^\ast$, we have
\begin{eqnarray*}
\langle (\xi \succ_+ \eta)\prec_+ \theta-\xi \succ_+ (\eta \prec_+ \theta), x \rangle
&=& \langle R_{\succ_A}^*(I(L_{\prec_A}^*(I\eta)\xi))\theta- L_{\prec_A}^*(I(R_{\succ_A}^*(I\eta)\theta))\xi,x  \rangle\\
&=& \langle L_{\succ_A}^*(x)\theta,I(L_{\prec_A}^*(I\eta)\xi)\rangle-\langle \xi,I(R_{\succ_A}^*(I\eta)\theta)\prec_A x \rangle\\
&=& -\langle I(L_{\succ_A}^*(x)\theta),L_{\prec_A}^*(I\eta)\xi\rangle+\langle \xi,L_{\prec_A}(I\eta)(I\theta)\prec_A x\rangle\\
&=& -\langle L_{\prec_A}(I\eta)(\huaR_A(x)I\theta),\xi \rangle+\langle \xi,(I\eta\prec_A I\theta) \prec_A x \rangle\\
&=& \langle \xi,-I\eta\prec_A(I\theta \ast x)+(I\eta\prec_A I\theta) \prec_A x \rangle\\
&=& 0,
\end{eqnarray*}
which implies that $(\xi \succ_+ \eta)\prec_+ \theta=\xi \succ_+ (\eta \prec_+ \theta)$. Moreover, we have
\begin{eqnarray*}
&&\langle (\xi \prec_+ \eta)\prec_+ \theta-\xi \prec_+ (\eta \prec_+ \theta+\eta \succ_+ \theta), x \rangle\\
&=& \langle R^*_{\succ_A}(I(R^*_{\succ_A}(I\xi)\eta))\theta-R^*_{\succ_A}(I\xi)R^*_{\succ_ A}(I\eta)\theta-R^*_{\succ_A}(I\xi)L^*_{\prec_A}(I\theta)\eta,x  \rangle\\
&=& \langle \theta, x\succ_A I(R^*_{\succ_A}(I\xi)\eta)-(x \succ_A I\xi)\succ_A I\eta \rangle-
\langle R^*_{\prec_A}(x\succ_A I\xi)\eta,I\theta  \rangle\\
&=& \langle \theta,-x\succ_A (I\xi \prec_A I\eta)-(x \succ_A I\xi)\succ_A I\eta+(x\succ_A I\xi)\ast I\eta   \rangle\\
&=& \langle \theta,-x\succ_A (I\xi \prec_A I\eta)+(x\succ_A I\xi)\prec_A I\eta   \rangle\\
&=& 0,
\end{eqnarray*}
which implies that $(\xi \prec_+ \eta)\prec_+ \theta=\xi \prec_+ (\eta \prec_+ \theta+\eta \succ_+ \theta)$. Similarly, we also have
\begin{eqnarray*}
&&\langle \xi \succ_+ (\eta \succ_+ \theta)-(\xi \prec_+\eta+\xi \succ_+\eta)\succ_+ \theta, x \rangle\\
&=& \langle \eta,-I\theta \prec_A (x\ast I\xi)-(I\theta \prec_A x)\succ_A I\xi+(I\theta \prec_A x)\ast I\xi \rangle\\
&=& \langle \eta, -I\theta \prec_A (x\ast I\xi)+(I\theta \prec_A x)\prec_A I\xi \rangle\\
&=& 0,
\end{eqnarray*}
which implies that $\xi \succ_+ (\eta \succ_+ \theta)=(\xi \prec_+\eta+\xi \succ_+\eta)\succ_+ \theta$.
Thus, $(A^{\ast},\prec_+,\succ_+)$ is a dendriform algebra.
\end{proof}

\begin{lem}\label{new-action}
$(\huaR_A^\ast,-L_{\prec_A}^\ast,-R_{\succ_A}^\ast,\huaL_A^\ast)$ is an action of the dendriform algebra $(A,\prec_A,\succ_A)$ on the dendriform algebra $(A^\ast,\prec_+,\succ_+)$ given in Proposition \ref{new-dendriform}.
\end{lem}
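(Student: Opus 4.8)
The plan is to observe first that the quadruple $(\huaR_A^\ast,-L_{\prec_A}^\ast,-R_{\succ_A}^\ast,\huaL_A^\ast)$ is precisely the coregular representation of $(A,\prec_A,\succ_A)$ on $A^\ast$ recorded in Example \ref{re-core-rep}, since $\huaR_A^\ast=R_{\succ_A}^\ast+R_{\prec_A}^\ast$ and $\huaL_A^\ast=L_{\succ_A}^\ast+L_{\prec_A}^\ast$. Hence the representation axioms in Definition \ref{action} hold automatically, and it remains only to verify the nine additional compatibility equations relating these four operators to the products $\prec_+,\succ_+$ of Proposition \ref{new-dendriform}. I note in passing that for this action $l_\ast=l_\prec+l_\succ=R_{\prec_A}^\ast$ and $r_\ast=r_\prec+r_\succ=L_{\succ_A}^\ast$, which will match the operators appearing whenever the combined product $\ast_B$ occurs.

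Before computing I would record the intertwining consequences of Lemma \ref{inv}. That lemma gives $I\circ R_{\prec_A}^\ast(x)=\huaL_A(x)\circ I$ and $I\circ L_{\succ_A}^\ast(x)=\huaR_A(x)\circ I$; expanding $\huaR_A^\ast=R_{\prec_A}^\ast+R_{\succ_A}^\ast$ and $\huaL_A^\ast=L_{\prec_A}^\ast+L_{\succ_A}^\ast$ in the companion identities $L_{\succ_A}(x)\circ I=I\circ\huaR_A^\ast(x)$ and $R_{\prec_A}(x)\circ I=I\circ\huaL_A^\ast(x)$ then yields the two derived relations
\[
I\circ R_{\succ_A}^\ast(x)=-L_{\prec_A}(x)\circ I,\qquad I\circ L_{\prec_A}^\ast(x)=-R_{\succ_A}(x)\circ I.
\]
Together with $I^\ast=-I$ from \eqref{I2}, these four relations are exactly the tools that already powered the proof of Proposition \ref{new-dendriform}, and they are the only structural input beyond the dendriform axioms of $A$.

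The verification of each of the nine equations then follows a single uniform pattern: expand $\prec_+,\succ_+$ through \eqref{newproduct1}--\eqref{newproduct2} (each product inserting one copy of $I$ and one dual structure operator), pair the resulting identity in $A^\ast$ against an arbitrary $z\in A$, move the dual operators onto $z$ across the pairing, push $I$ through the structure operators by the four relations above, and finally collapse both sides using one of the three dendriform axioms of $(A,\prec_A,\succ_A)$. For a ``same type'' equation such as $r_\prec(x)(u\succ_+v)=u\succ_+(r_\prec(x)v)$ this is immediate: the left side becomes $\langle u,\,Iv\prec_A(x\ast z)\rangle$ and the right side becomes $\langle u,\,(Iv\prec_A x)\prec_A z\rangle$, and the first dendriform axiom identifies them.

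The place that needs genuine care --- and which I expect to be the main obstacle --- is the sign and pairing bookkeeping in the ``cross type'' equations in which one of the two sign-carrying operators $r_\succ=-L_{\prec_A}^\ast$ or $l_\prec=-R_{\succ_A}^\ast$ appears, for instance $(r_\succ(x)u)\prec_+v=u\succ_+(l_\prec(x)v)$. Here the two sides initially pair $v$ (through $Iu$) against $u$ (through $Iv$), so one cannot match them axiom-by-axiom without first using $I^\ast=-I$ to transpose the pairing and interchange the roles of $u$ and $v$. Concretely, rewriting the left side as $-\langle u,\,I\big(R_{\succ_A}^\ast(x)L_{\succ_A}^\ast(z)v\big)\rangle$ and applying $I\circ R_{\succ_A}^\ast=-L_{\prec_A}\circ I$ followed by $I\circ L_{\succ_A}^\ast=\huaR_A\circ I$ turns it into $\langle u,\,x\prec_A(Iv\ast z)\rangle$, while the right side is $\langle u,\,(x\prec_A Iv)\prec_A z\rangle$, and again the first dendriform axiom finishes. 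Once the sign conventions and the $I^\ast=-I$ transposition are handled consistently in these model cases, the remaining equations are routine repetitions, establishing that $(\huaR_A^\ast,-L_{\prec_A}^\ast,-R_{\succ_A}^\ast,\huaL_A^\ast)$ is an action of $(A,\prec_A,\succ_A)$ on $(A^\ast,\prec_+,\succ_+)$.
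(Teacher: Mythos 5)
Your proposal is correct and follows essentially the same route as the paper: both take the representation axioms as already settled by the coregular representation of Example \ref{re-core-rep}, and then verify the nine additional equations of Definition \ref{action} by pairing against an element of $A$, pushing $I$ through the structure operators via Lemma \ref{inv}, and closing with the dendriform axioms. Indeed, the single identity the paper writes out, $\huaL_A^\ast(x)(\xi\succ_+\eta)=\xi\succ_+\bigl(\huaL_A^\ast(x)\eta\bigr)$, is exactly your ``same type'' model case, and your extra treatment of a cross-type equation (with the $I^\ast=-I$ transposition and the derived relations $I\circ R_{\succ_A}^\ast(x)=-L_{\prec_A}(x)\circ I$, $I\circ L_{\prec_A}^\ast(x)=-R_{\succ_A}(x)\circ I$) correctly fills in a case the paper dismisses as ``similar''.
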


\begin{proof}
We need only to prove that $(\huaR_A^\ast,-L_{\prec_A}^\ast,-R_{\succ_A}^\ast,\huaL_A^\ast)$ satisfy nine equations in Definition \ref{action}. By Lemma \ref{inv}, for all $x,y\in A,~~\xi,\eta\in A^*,$ we have
\begin{eqnarray*}
\langle \huaL_A^\ast(x)(\xi \succ_+ \eta)-\xi \succ_+ (\huaL_A^\ast(x)\eta),y  \rangle
&=& \langle L_{\prec_A}^*(I\eta) \xi, x\ast y \rangle-\langle  L_{\prec_A}^*(I(\huaL_A^*(x)\eta))\xi,y  \rangle\\
&=& \langle \xi, I\eta \prec_A (x\ast y)-I(\huaL_A^*(x)\eta)\prec_A y \rangle\\
&=& \langle \xi, I\eta \prec_A (x\ast y)-(I\eta \prec_A x)\prec_A y \rangle\\
&=& 0,
\end{eqnarray*}
which implies that $\huaL_A^\ast(x)(\xi \succ_+ \eta)=\xi \succ_+ (\huaL_A^\ast(x)\eta)$, for all $x\in A,~~\xi,\eta\in A^*.$ Similarly, the other eight equations are satisfied by direct calculations. Thus,
$(\huaR_A^\ast,-L_{\prec_A}^\ast,-R_{\succ_A}^\ast,\huaL_A^\ast)$ is an action of the dendriform algebra $(A,\prec_A,\succ_A)$ on the dendriform algebra $(A^\ast,\prec_+,\succ_+)$.
\end{proof}

\begin{thm}
Let $(A,A_r^*)$ be a quasi-triangular dendriform D-bialgebra induced by $r\in A\otimes A$. Then $r_+:(A^{\ast},\prec_+,\succ_+) \to (A,\prec_A,\succ_A)$ is a relative Rota-Baxter operator of weight $1$ with respect to the action $(\huaR_A^\ast,-L_{\prec_A}^\ast,-R_{\succ_A}^\ast,\huaL_A^\ast)$.
\end{thm}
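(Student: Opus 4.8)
The plan is to unwind the definition of a relative Rota-Baxter operator of weight $1$ and to recognize that, once the explicit action $(\huaR_A^\ast,-L_{\prec_A}^\ast,-R_{\succ_A}^\ast,\huaL_A^\ast)$ is substituted, the two defining identities collapse exactly onto the homomorphism property of $r_+$. Concretely, by Definition \ref{rRBo} with $T=r_+$, $B=(A^\ast,\prec_+,\succ_+)$ and $\lambda=1$, the map $r_+$ is a relative Rota-Baxter operator of weight $1$ with respect to this action if and only if, for all $\xi,\eta\in A^\ast$,
\begin{eqnarray*}
r_+(\xi)\succ_A r_+(\eta)&=&r_+\big(\huaR_A^\ast(r_+\xi)\eta-L_{\prec_A}^\ast(r_+\eta)\xi+\xi\succ_+\eta\big),\\
r_+(\xi)\prec_A r_+(\eta)&=&r_+\big(-R_{\succ_A}^\ast(r_+\xi)\eta+\huaL_A^\ast(r_+\eta)\xi+\xi\prec_+\eta\big).
\end{eqnarray*}
Note that the action here is the coregular representation, now carried over to the new dendriform algebra $(A^\ast,\prec_+,\succ_+)$ via Lemma \ref{new-action}, so the right-hand sides are well-posed.

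First I would compare the two arguments of $r_+$ on the right-hand sides with the reduced product formulas \eqref{succr2}--\eqref{precr2} together with the definitions \eqref{newproduct1}--\eqref{newproduct2} of $\succ_+,\prec_+$. Substituting $\xi\succ_+\eta=L_{\prec_A}^\ast(I\eta)\xi$ into the first argument reproduces term-for-term the right-hand side of \eqref{succr2}, so that argument equals $\xi\succ_r\eta$; likewise, substituting $\xi\prec_+\eta=R_{\succ_A}^\ast(I\xi)\eta$ into the second argument reproduces \eqref{precr2}, so that argument equals $\xi\prec_r\eta$. Hence the two relative Rota-Baxter identities are equivalent to
$$
r_+(\xi)\succ_A r_+(\eta)=r_+(\xi\succ_r\eta),\qquad r_+(\xi)\prec_A r_+(\eta)=r_+(\xi\prec_r\eta),
$$
which is precisely the statement that $r_+\colon(A^\ast,\prec_r,\succ_r)\to(A,\prec_A,\succ_A)$ is a homomorphism of dendriform algebras.

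Finally I would invoke Theorem \ref{heart2}: since $(A,A_r^\ast)$ is quasi-triangular, $r$ satisfies the D-equation and the skew-symmetric part $a$ of $r$ is $(L_\succ,R_\prec)$-invariant, so by that theorem $r_+$ is a homomorphism of dendriform algebras, which closes the argument. I expect no genuine obstacle here: the whole content is the bookkeeping observation that the weight-$1$ correction terms $\xi\succ_+\eta$ and $\xi\prec_+\eta$ are exactly the extra summands $L_{\prec_A}^\ast(I\eta)\xi$ and $R_{\succ_A}^\ast(I\xi)\eta$ that distinguish $\succ_r,\prec_r$ from the products built solely from $r_+$, so that the relative Rota-Baxter identity and the homomorphism property become literally the same pair of equations. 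The only point requiring care is matching the signs and the left/right slots of the action $(\huaR_A^\ast,-L_{\prec_A}^\ast,-R_{\succ_A}^\ast,\huaL_A^\ast)$ against the corresponding terms in \eqref{succr2}--\eqref{precr2}.
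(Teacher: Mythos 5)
Your proposal is correct and follows essentially the same route as the paper: both arguments identify the weight-$1$ relative Rota-Baxter identity for $r_+$ with the homomorphism property $r_+(\xi\succ_r\eta)=r_+(\xi)\succ_A r_+(\eta)$, $r_+(\xi\prec_r\eta)=r_+(\xi)\prec_A r_+(\eta)$ via the reduced formulas \eqref{succr2}--\eqref{precr2} and the definitions \eqref{newproduct1}--\eqref{newproduct2}, and then invoke Theorem \ref{heart2}. The only difference is direction of presentation (the paper starts from the homomorphism property and unfolds it into the Rota-Baxter identity, while you verify the equivalence of the two identities first), which is immaterial.
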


\begin{proof}
By Theorem \ref{heart2} and \eqref{succr2}-\eqref{precr2}, for all $\xi,\eta\in A^*$, we have
\begin{eqnarray*}
r_+(\xi) \succ_A r_+(\eta)&=& r_+( \xi \succ_r \eta)\\
&=& r_+(\huaR_A^* (r_+(\xi))\eta-L_{\prec_A}^*(r_{+}(\eta))\xi+L_{\prec_A}^*(I\eta)\xi)\\
&=& r_+(\huaR_A^* (r_+(\xi))\eta-L_{\prec_A}^*(r_{+}(\eta))\xi+\xi \succ_+ \eta),
\end{eqnarray*}
and
\begin{eqnarray*}
r_+(\xi) \prec_A r_+(\eta)&=& r_+( \xi \prec_r \eta)\\
&=& r_+(\huaL_A^* (r_+(\eta)) \xi-R_{\succ_A}^*(r_{+}(\xi))\eta+R_{\succ_A}^*(I\xi)\eta)\\
&=& r_+(\huaL_A^* (r_+(\eta)) \xi-R_{\succ_A}^*(r_{+}(\xi))\eta+\xi \prec_+ \eta),
\end{eqnarray*}
which implies that $r_+:(A^{\ast},\prec_+,\succ_+) \to (A,\prec_A,\succ_A)$ is a relative Rota-Baxter operator of weight $1$ with respect to the action $(\huaR_A^\ast,-L_{\prec_A}^\ast,-R_{\succ_A}^\ast,\huaL_A^\ast)$ by Definition \ref{rRBo}.
\end{proof}

\begin{rmk}
In fact, if we define new multiplications $\prec_-,\succ_-$ on $A^{\ast}$ as follows:
$$\xi \succ_- \eta=\huaR_A^*(I\xi)\eta;\quad \xi \prec_- \eta=\huaL_A^*(I\eta)\xi, \quad \forall \xi,\eta\in A^*,$$
then $(A^*,\prec_-,\succ_-)$ is also a dendriform algebra. We can also prove that $r_-:(A^*,\prec_-,\succ_-) \to (A,\prec_A,\succ_A)$ is a relative Rota-Baxter operator of weight $1$ with respect to the action $(\huaR_A^\ast,-L_{\prec_A}^\ast,-R_{\succ_A}^\ast,\huaL_A^\ast)$.
\end{rmk}

\section{Rota-Baxter characterization of factorizable dendriform D-bialgebras}\label{QRBD}

In this section, first we introduce the notion of quadratic Rota-Baxter dendriform algebras. Then we show that there is a one-to-one correspondence between quadratic dendriform algebras and associative algebras with nondegenerate Connes cocycles. Then we add Rota-Baxter operators on these algebraic structures, and show that there is still a one-to-one correspondence between quadratic Rota-Baxter dendriform algebras and Rota-Baxter associative algebras with nondegenerate Connes cocycles.

\subsection{Quadratic Rota-Baxter dendriform algebras}

\begin{defi}\label{quadratic RB dendriform algebra}
	Let $(A,\prec_A,\succ_A)$ be a dendriform algebra and $\omega\in \wedge^{2} A^{\ast}$ a nondegenerate skew-symmetric bilinear form. If $\omega$ is invariant, i.e.
	\begin{eqnarray}\label{invariant3}
		\omega(x\succ_A y,z)=-\omega(x,y\prec_A z)=\omega(y,z\ast x),\quad \forall x,y,z\in A,
	\end{eqnarray}
	then $(A,\prec_A,\succ_A,\omega)$  is called a {\bf quadratic dendriform algebra.}
\end{defi}

Recall from \cite{Bai,Connes} that a skew-symmetric bilinear form $\omega\in\wedge^2 A^*$ is called a cyclic 2-cocycle in the sense of Connes, or simply a {\bf Connes cocycle} on an associative algebra $(A,\ast)$ if
$$\omega(x\ast y,z)+\omega(y\ast z,x)+\omega(z\ast x,y)=0,\quad  \forall x,y,z \in A.$$

There is a one-to-one correspondence between quadratic dendriform algebras and associative algebras with nondegenerate Connes cocycles.
\begin{thm}\label{lem:den-ass}{\rm(\cite{Bai})}
Let $(A,\prec_A,\succ_A,\omega)$ be a quadratic dendriform algebra. Then $(A,\ast,\omega)$ is an associative algebra with a nondegenerate Connes cocycle $\omega$, where the associative multiplication $\ast$ is given by \eqref{ast}.

Conversely, let $(A,\ast,\omega)$ be an associative algebra with a nondegenerate Connes cocycle $\omega$. Then there exists a compatible dendriform algebra structure $\prec_A,\succ_A$ on $A$ given by
\begin{eqnarray}
\label{eq:den-ass1}\omega(x\succ_A y,z)&=&\omega(y,z\ast x),\\
\label{eq:den-ass2}\omega(x\prec_A y,z)&=&\omega(x,y\ast z), \quad \forall x,y,z\in A,                               \end{eqnarray}
whose sub-adjacent associative algebra is exactly $(A,\ast)$. Moreover, $(A,\prec_A,\succ_A,\omega)$ is a quadratic dendriform algebra.
\end{thm}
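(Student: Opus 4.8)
The plan is to exploit the nondegeneracy of $\omega$ to translate every identity about the products $\prec_A,\succ_A$ into an identity among values of the bilinear form, and to read the invariance condition \eqref{invariant3} as the pair of defining relations \eqref{eq:den-ass1}--\eqref{eq:den-ass2}. Indeed, the two equalities in \eqref{invariant3} say precisely $\omega(x\succ_A y,z)=\omega(y,z\ast x)$ and, after applying skew-symmetry to $-\omega(x,y\prec_A z)$, $\omega(x\prec_A y,z)=\omega(x,y\ast z)$. Thus in both directions the whole content is this single dictionary between \eqref{invariant3} and \eqref{eq:den-ass1}--\eqref{eq:den-ass2}, together with the cyclic Connes cocycle identity.

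For the forward direction, recall that $(A,\ast)$ is associative for any dendriform algebra, so it remains only to check that $\omega$ is a Connes cocycle. First I would use the two relations just extracted to compute $\omega(x\ast y,z)=\omega(x\prec_A y,z)+\omega(x\succ_A y,z)=\omega(x,y\ast z)+\omega(y,z\ast x)$. Applying skew-symmetry to both terms on the right turns this into $\omega(x\ast y,z)=-\omega(y\ast z,x)-\omega(z\ast x,y)$, which rearranges to the Connes cocycle condition $\omega(x\ast y,z)+\omega(y\ast z,x)+\omega(z\ast x,y)=0$. Nondegeneracy and skew-symmetry of $\omega$ are inherited, so $(A,\ast,\omega)$ is an associative algebra with a nondegenerate Connes cocycle.

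For the converse, I would first use nondegeneracy to define $\succ_A,\prec_A$: for fixed $x,y$ the assignments $z\mapsto\omega(y,z\ast x)$ and $z\mapsto\omega(x,y\ast z)$ are linear functionals, so there are unique elements $x\succ_A y,\,x\prec_A y$ realizing \eqref{eq:den-ass1}--\eqref{eq:den-ass2}, and their bilinearity is immediate. The structural checks are then threefold: (i) the sub-adjacent product of this dendriform structure equals $\ast$; (ii) the three dendriform axioms; and (iii) invariance \eqref{invariant3}. For (i) I would test against an arbitrary $z$: $\omega(x\prec_A y+x\succ_A y,z)=\omega(x,y\ast z)+\omega(y,z\ast x)$, and the Connes cocycle identity, rewritten via skew-symmetry, shows this equals $\omega(x\ast y,z)$; nondegeneracy then yields $x\prec_A y+x\succ_A y=x\ast y$, which is \eqref{ast}. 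For (ii), each axiom is verified by pairing with an arbitrary element and repeatedly peeling off $\prec_A$ or $\succ_A$ using \eqref{eq:den-ass1}--\eqref{eq:den-ass2}, after which the two sides differ only by a reassociation of $\ast$ and hence coincide; for instance $\omega((x\prec_A y)\prec_A z,w)=\omega(x,y\ast(z\ast w))$ while $\omega(x\prec_A(y\ast z),w)=\omega(x,(y\ast z)\ast w)$. For (iii) one equality of \eqref{invariant3} is \eqref{eq:den-ass1} by definition, and the other follows by applying skew-symmetry and \eqref{eq:den-ass2}, exactly as in the dictionary above.

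The computations are routine; the only genuine obstacle is bookkeeping, together with the observation that the \emph{cyclic} nature of the Connes cocycle (as opposed to mere invariance) is needed in precisely one place, namely step (i), to recover $x\prec_A y+x\succ_A y=x\ast y$, whereas the three dendriform axioms themselves collapse to associativity of $\ast$. Throughout, nondegeneracy of $\omega$ is what licenses passing from equality of all $\omega$-pairings to equality of the underlying elements.
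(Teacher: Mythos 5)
Your proof is correct, and since the paper states this theorem without proof (deferring to the cited reference of Bai), your argument is exactly the standard one: the nondegeneracy of $\omega$ turns \eqref{invariant3} and \eqref{eq:den-ass1}--\eqref{eq:den-ass2} into an equivalence, the Connes cocycle identity is precisely what recovers $x\prec_A y+x\succ_A y=x\ast y$, and the three dendriform axioms reduce, after peeling off products against a test vector, to associativity of $\ast$. Your closing observation about where cyclicity (as opposed to mere invariance) enters is also accurate, so there is nothing to correct.
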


Let $(A,\prec_A,\succ_A,P)$ be a Rota-Baxter dendriform algebra of weight $\lambda$. Then there are new dendriform multiplications $\prec_{P},\succ_{P}$ on $A$ defined by
$$ x \prec_{P} y= P(x)\prec_A y+x \prec_A P(y)+\lambda x\prec_A y; $$
$$ x \succ_{P} y= P(x)\succ_A y+x \succ_A P(y)+\lambda x\succ_A y. $$
The dendriform algebra $(A,\prec_{P},\succ_{P})$ is called the {\bf descendent dendriform algebra} and denoted by $A_{P}$. Furthermore, $P$ is a dendriform algebra homomorphism from the dendriform algebra $(A,\prec_{P},\succ_{P})$ to $(A,\prec_A,\succ_A)$.

We add Rota-Baxter operators on quadratic dendriform algebras and associative algebras with nondegenerate Connes cocycles, and introduce the notions of quadratic Rota-Baxter dendriform algebras and Rota-Baxter associative algebras with nondegenerate Connes cocycles.
\begin{defi}
	Let	$(A,\prec_A,\succ_A,P)$ be a Rota-Baxter dendriform algebra of weight $\lambda$ and $(A,\prec_A,\succ_A,\omega)$  a quadratic dendriform algebra. Then the tuple $(A,\prec_A,\succ_A,P,\omega)$ is called a {\bf quadratic Rota-Baxter dendriform algebra of weight $\lambda$} if the following compatibility condition holds:
	\begin{eqnarray}\label{compat condition}
		\omega(Px,y)+	\omega(x,Py)+\lambda \omega(x,y)=0, \quad \forall x,y \in A.
	\end{eqnarray}
\end{defi}
\begin{defi}
	Let $(A,\ast,P)$ be a Rota-Baxter associative algebra of weight $\lambda$ and $(A,\ast,\omega)$ an associative algebra with a nondegenerate Connes cocycle $\omega$. Then $(A,\ast,P,\omega)$ is  called a {\bf Rota-Baxter associative algebra with a nondegenerate Connes cocycle of weight $\lambda$} if $P$ and $\omega$ satisfy the compatibility condition \eqref{compat condition}.
\end{defi}

We now show that the relations between quadratic dendriform algebras and associative algebras with nondegenerate Connes cocycles given in Theorem \ref{lem:den-ass} also hold for quadratic Rota-Baxter dendriform algebras and Rota-Baxter associative algebras with nondegenerate Connes cocycles of the same weight.

\begin{thm}\label{thm:den-assconnes}
Let $(A,\prec_A,\succ_A,P,\omega)$ be a quadratic Rota-Baxter dendriform algebra of weight $\lambda$. Then  $(A,\ast,P,\omega)$ is a Rota-Baxter associative algebra with a nondegenerate Connes cocycle $\omega$ of weight $\lambda$, where $\ast$ is given by \eqref{ast}.

Conversely, let $(A,\ast,P,\omega)$ be a Rota-Baxter associative algebra with a nondegenerate Connes cocycle of weight $\lambda$. Then $(A,\prec_A,\succ_A,P,\omega)$ is a quadratic Rota-Baxter dendriform algebra of weight $\lambda$, where dendriform algebra multiplications $\prec_A,\succ_A$ are given by \eqref{eq:den-ass1} and \eqref{eq:den-ass2}.
\end{thm}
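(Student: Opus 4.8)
The plan is to verify the claim in two directions, each of which has two ingredients to check: the Rota-Baxter relation for $P$ on the relevant algebra, and the compatibility condition \eqref{compat condition} relating $P$ and $\omega$. The key observation is that the compatibility condition \eqref{compat condition} is stated identically for both the dendriform and the associative setting, so that piece transfers for free in both directions; only the Rota-Baxter identities themselves require work. By Theorem \ref{lem:den-ass} the passage between the quadratic dendriform structure $(A,\prec_A,\succ_A,\omega)$ and the associative structure with Connes cocycle $(A,\ast,\omega)$ is already established, so in each direction I may assume the quadratic/Connes-cocycle half and concentrate entirely on reconciling the two notions of Rota-Baxter operator.

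First I would treat the forward direction. Assume $(A,\prec_A,\succ_A,P,\omega)$ is a quadratic Rota-Baxter dendriform algebra of weight $\lambda$. Adding the two defining identities of a Rota-Baxter operator on a dendriform algebra (Definition \ref{rRBo}(i)) and using $x\ast y=x\prec_A y+x\succ_A y$ from \eqref{ast}, I get
\begin{eqnarray*}
P(x)\ast P(y)&=&P(x)\prec_A P(y)+P(x)\succ_A P(y)\\
&=&P\big(P(x)\prec_A y+x\prec_A P(y)+\lambda x\prec_A y\big)\\
&&+P\big(P(x)\succ_A y+x\succ_A P(y)+\lambda x\succ_A y\big)\\
&=&P\big(P(x)\ast y+x\ast P(y)+\lambda x\ast y\big),
\end{eqnarray*}
which is exactly the weight-$\lambda$ Rota-Baxter identity for the associative product $\ast$. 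Since $\omega$ is a nondegenerate Connes cocycle on $(A,\ast)$ by Theorem \ref{lem:den-ass} and the compatibility condition \eqref{compat condition} is carried over verbatim, $(A,\ast,P,\omega)$ is a Rota-Baxter associative algebra with a nondegenerate Connes cocycle of weight $\lambda$.

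For the converse, suppose $(A,\ast,P,\omega)$ is given and define $\prec_A,\succ_A$ by \eqref{eq:den-ass1}--\eqref{eq:den-ass2}; by Theorem \ref{lem:den-ass} this yields a quadratic dendriform algebra with sub-adjacent product $\ast$, and the compatibility condition again transfers without change. The remaining and genuinely substantive step is to show that $P$ satisfies the \emph{two separate} dendriform Rota-Baxter identities, not merely their sum. The approach is to pair each identity against an arbitrary $z$ using the nondegenerate $\omega$ and rewrite $\prec_A,\succ_A$ via the invariance formulas \eqref{eq:den-ass1}--\eqref{eq:den-ass2}; for instance, to prove the $\succ_A$-identity I would compute $\omega\big(P(x)\succ_A P(y),z\big)=\omega\big(P(y),z\ast P(x)\big)$ and then expand the right-hand side using the associative Rota-Baxter relation together with \eqref{compat condition} to move $P$ across $\omega$, reassembling the result as $\omega$ paired with $P\big(P(x)\succ_A y+x\succ_A P(y)+\lambda x\succ_A y\big)$. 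The $\prec_A$-identity is handled symmetrically via \eqref{eq:den-ass2}. The main obstacle is precisely this splitting: the associative identity only delivers the sum of the two dendriform relations, so the nondegeneracy of $\omega$ and the precise form of the invariance identities must be used to recover each component individually. Once both components are matched, nondegeneracy of $\omega$ forces the operator identities themselves, completing the proof.
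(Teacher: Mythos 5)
Your proposal is correct and follows essentially the same route as the paper: the forward direction by summing the two dendriform Rota--Baxter identities to obtain the associative one, and the converse by pairing $P(x)\succ_A P(y)-P\big(P(x)\succ_A y+x\succ_A P(y)+\lambda x\succ_A y\big)$ against an arbitrary $z$ via the nondegenerate $\omega$, converting $\succ_A$ to $\ast$ with \eqref{eq:den-ass1}, moving $P$ across $\omega$ with \eqref{compat condition}, and collapsing the result to the associative Rota--Baxter identity. The paper simply carries out in full the term-by-term expansion you sketch, and your identification of the splitting of the associative identity into the two dendriform components as the substantive issue matches where the paper's computation does its work.
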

\begin{proof}
It is straightforward to deduce that if $P:A \rightarrow A$ is a  Rota-Baxter operator of weight $\lambda$ on a dendriform algebra $(A,\prec_A,\succ_A)$, then $P$  is a Rota-Baxter operator of weight $\lambda$ on the sub-adjacent associative algebra $(A,\ast)$. Since  $(A,\prec_A,\succ_A,\omega)$ is a quadratic dendriform algebra, by Theorem \ref{lem:den-ass}, $(A,\ast,\omega)$ is an associative algebra with a nondegenerate Connes cocycle $\omega$. Therefore, if $(A,\prec_A,\succ_A,P,\omega)$ is a quadratic Rota-Baxter dendriform algebra of weight $\lambda$, then $(A,\ast,P,\omega)$ is a Rota-Baxter associative algebra with a nondegenerate Connes cocycle $\omega$ of weight $\lambda$.

Conversely, let $(A,\ast,P,\omega)$ be a Rota-Baxter associative algebra with a nondegenerate Connes cocycle $\omega$ of weight $\lambda$. First by Theorem \ref{lem:den-ass}, $(A,\prec_A,\succ_A,\omega)$ is a quadratic dendriform algebra. In the following, we show that $P$ is a Rota-Baxter operator of weight $\lambda$ on the dendriform algebra $(A,\prec_A,\succ_A)$. By \eqref{eq:den-ass1}-\eqref{compat condition}, we have
\begin{eqnarray*}
		&&\omega \Big(P(x)\succ_A P(y)-P(P(x)\succ_A y+x \succ_A P(y)+\lambda x \succ_A y),z\Big)\\
		&=& \omega \big(P(x)\succ_A P(y),z \big)+\omega \big(P(x)\succ_A y+x\succ_A P(y)+\lambda x\succ_A y,P(z) \big)\\
        &&+\lambda \omega \big(P(x)\succ_A y+x\succ_A P(y)+\lambda x\succ_A y,z \big)\\
		&=& \omega \big(P(y),z \ast P(x) \big)+\omega \big(y,P(z)\ast P(x) \big)+\omega\big (P(y),P(z)\ast x \big)+\lambda\omega \big(y,P(z)\ast x \big)\\
		&&+\lambda\omega\big(y,z\ast P(x)\big)+\lambda\omega\big(P(y),z\ast x\big)+\lambda^2 \omega(y,z\ast x)\\
		&=& -\omega\big(y,P(z\ast P(x)) \big)-\lambda\omega \big(y,z\ast P(x) \big)+\omega \big(y,P(z)\ast P(x) \big)-\omega \big(y,P(P(z)\ast x) \big)-\lambda\omega \big(y,P(z)\ast x \big)\\
&&+\lambda\omega \big(y,P(z)\ast x \big)+\lambda\omega \big(y,z\ast P(x) \big)-\lambda\omega \big(y,P(z\ast x) \big)-\lambda^2\omega(y,z\ast x)+\lambda^2\omega(y,z\ast x)\\
		&=& \omega\Big(y,P(z)\ast P(x)-P(z\ast P(x)+P(z)\ast x+\lambda z\ast x)\Big)\\
		&=&0.
	\end{eqnarray*}
The last equality follows from the fact that $P$ is a Rota-Baxter operator of weight $\lambda$ on the associative algebra $(A,\ast)$. Furthermore, by the nondegeneracy of $\omega$, we have
$$ P(x)\succ_A P(y)=P(P(x)\succ_A y+x \succ_A P(y)+\lambda x \succ_A y),\quad \forall x,y,z\in A.  $$
Similarly, we also have
$$ P(x)\prec_A P(y)=P(P(x)\prec_A y+x \prec_A P(y)+\lambda x \prec_A y),\quad \forall x,y,z\in A,  $$
which implies that $P$ is a Rota-Baxter operator of weight $\lambda$ on the dendriform algebra $(A,\prec_A,\succ_A)$. Therefore, $(A,\prec_A,\succ_A,P,\omega)$ is a quadratic Rota-Baxter dendriform algebra of weight $\lambda$.
\end{proof}

\begin{ex}
Let $(A,\prec_A,\succ_A)$ be a dendriform algebra. Then $(A\ltimes A^*,\circ_c,\omega)$ is an  associative algebra with a nondegenerate Connes cocycle $\omega$, where $A\ltimes A^*$ is the semidirect product of the associative algebra $(A,\ast)$ and $A^*$, in which $(A^*;R_{\prec_A}^*,L_{\succ_A}^*)$ is a representation of the associative algebra $(A,\ast)$ \cite{Bai}. More precisely, the associative multiplication $\circ_c$ is given by
$$ (x+\xi) \circ_c (y+\eta)=x\ast y+R_{\prec_A}^*(x)\eta+L_{\succ_A}^*(y)\xi, \quad \forall x,y\in A,\xi,\eta \in A^*, $$
and the nondegenerate Connes cocycle $\omega$ is given by
$$ \omega(x+\xi,y+\eta)=\langle \xi,y \rangle-\langle \eta,x \rangle, \quad \forall x,y\in A,\xi,\eta \in A^*. $$
Moreover, for any $\lambda$, define linear maps $P_1,P_2:A\ltimes A^* \to A\ltimes A^*$ by
$$ P_1(x+\xi)=-\lambda x,\quad P_2(x+\xi)=-\lambda \xi,  \quad \forall x\in A,\xi \in A^*.   $$
Then $P_1,P_2$ are Rota-Baxter operators of weight $\lambda$ on $A\ltimes A^*$.
For any $i=1,2,$ it is straightforward to check that the following equation holds:
$$ \omega(P_i (x+\xi),y+\eta  )+\omega(x+\xi,P_i(y+\eta) )+\lambda\omega(x+\xi,y+\eta)=0. $$
Therefore, $(A\ltimes A^*,\circ_c,P_i,\omega)$ is a Rota-Baxter associative algebra with a nondegenerate Connes cocycle $\omega$ of weight $\lambda$.
\end{ex}

\begin{ex}\label{2-dimension}
Let $(A,\ast)$ be a 2-dimensional associative algebra with a basis $\{e_1,e_2\}$ whose nonzero multiplications are given as follows:
$$ e_1\ast e_1=e_1, \quad  e_1\ast e_2=e_2.$$
Let $\{e_1^*,e_2^*\}$ be the dual basis of $(A,\ast)$. Then $(A,\ast,\omega,P)$ defined by
$$\omega=e_1^* \wedge e_2^*,\quad P(e_1)=0,\quad P(e_2)=-\lambda e_2$$
is a Rota-Baxter associative algebra with a nondegenerate Connes cocycle $\omega$ of weight $\lambda$.
Moreover, the nonzero dendriform multiplications of the corresponding quadratic Rota-Baxter dendriform algebra $(A,\prec_A,\succ_A,P,\omega)$ of weight $\lambda$ are given by
\begin{eqnarray*}
 e_1 \succ_A e_2&=&e_2,\quad e_2 \succ_A e_1=-e_2,\\
 e_1 \prec_A e_1&=&e_1,\quad e_2 \prec_A e_1=e_2.\\
\end{eqnarray*}\end{ex}

\subsection{Factorizable dendriform D-bialgebras, quadratic Rota-Baxter dendriform algebras}

The following theorem shows that there is a one-to-one correspondence between a factorizable dendriform D-bialgebra and a quadratic Rota-Baxter dendriform algebra.
\begin{thm}\label{Factorizable dendriform algebra}
Let $(A,A_{r}^{\ast})$ be a factorizable dendriform D-bialgebra with $I=r_{+}-r_{-}$. Then $(A,P,\omega_{I})$ is a quadratic Rota-Baxter dendriform algebra of weight $\lambda$, where the linear map $P:A\longrightarrow A$ and $\omega_{I}\in \wedge^{2} A^{\ast}$ are defined by
\begin{eqnarray}
\label{P2}P&=&\lambda r_{-}\circ I^{-1},\\
\label{omegaI}\omega_I(x,y)&=&\langle I^{-1}x,y \rangle, \quad \forall x,y \in A.
\end{eqnarray}
Conversely, let $(A,\prec_A,\succ_A,P,\omega)$ be a quadratic Rota-Baxter dendriform algebra of weight $\lambda$ $(\lambda\neq 0),$ and $ \huaJ_{\omega}: A^{\ast}\longrightarrow A $ the induced linear isomorphism given by $\langle \huaJ_{\omega}^{-1}x,y \rangle :=\omega(x,y).$ Then $r \in A\otimes A $ defined by
	$$ r_{+}:=\frac{1}{\lambda} (P+\lambda \Id)\circ \huaJ_{\omega}:A^{\ast} \longrightarrow A, \quad r_{+}(\xi)=r(\xi,\cdot), \quad \forall \xi\in A^{\ast} $$
satisfies the D-equation $r_{12}\ast r_{13}-r_{13}\prec r_{23}-r_{23}\succ r_{12}=0$ and gives rise to a factorizable dendriform D-bialgebra $(A,A_{r}^{\ast})$.
\end{thm}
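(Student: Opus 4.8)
The plan is to prove the two directions separately, using Theorem~\ref{heart2} as the bridge in each: under the standing hypothesis that the skew-symmetric part $a$ of $r$ is $(L_\succ,R_\prec)$-invariant, it identifies the D-equation with the statement that $r_{+}$ and $r_{-}$ are dendriform homomorphisms from $(A^*,\prec_r,\succ_r)$ to $(A,\prec_A,\succ_A)$. For the forward direction, assume $(A,A_r^*)$ is factorizable, so $I=r_{+}-r_{-}$ is invertible and (by Theorem~\ref{heart2}) $r_{\pm}$ are homomorphisms. First I would record $P=\lambda r_{-}\circ I^{-1}$ and $\lambda\Id+P=\lambda r_{+}\circ I^{-1}$. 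To see $P$ is Rota-Baxter of weight $\lambda$, I would set $x=I\xi$, $y=I\eta$, so $P(x)\succ_A P(y)=\lambda^2\,r_{-}(\xi)\succ_A r_{-}(\eta)=\lambda^2 r_{-}(\xi\succ_r\eta)$; the combination $P(x)\succ_A y+x\succ_A P(y)+\lambda x\succ_A y$ collapses, after substituting $r_{\pm}(\xi)\succ_A r_{\pm}(\eta)=r_{\pm}(\xi\succ_r\eta)$ and cancelling the mixed terms using $r_{+}-r_{-}=I$, to $\lambda\,I(\xi\succ_r\eta)$, whence applying $P$ returns $\lambda^2 r_{-}(\xi\succ_r\eta)$; the $\prec$-identity is identical.

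For the quadratic structure in the forward direction, nondegeneracy and skew-symmetry of $\omega_I$ are immediate from invertibility of $I$ and $I^*=-I$, while invariance of $\omega_I$ is exactly the pair of Lemma~\ref{inv} relations $I^{-1}(x\succ_A y)=\huaR_A^*(x)I^{-1}y$ and $I^{-1}(y\prec_A x)=\huaL_A^*(x)I^{-1}y$, read off against the pairing. The compatibility condition is then a one-line adjoint computation: using $r_{+}^*=r_{-}$, $(I^{-1})^*=-I^{-1}$ and $P=\lambda r_{-}I^{-1}$, one gets $I^{-1}P+P^*I^{-1}+\lambda I^{-1}=\lambda I^{-1}(r_{-}-r_{+})I^{-1}+\lambda I^{-1}=0$.

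For the converse, the first task is to identify $r_{-}$ and $I$. From $r_{+}^*=r_{-}$, the skew-symmetry $\huaJ_\omega^*=-\huaJ_\omega$, and the compatibility condition rewritten as $P^*\huaJ_\omega^{-1}=-\huaJ_\omega^{-1}(P+\lambda\Id)$, I would derive $P^*+\lambda\Id=-\huaJ_\omega^{-1}P\,\huaJ_\omega$, hence $r_{-}=\frac1\lambda P\circ\huaJ_\omega$; subtracting gives $I=r_{+}-r_{-}=\huaJ_\omega$, which is invertible, so that $\omega_I=\omega$ and the resulting bialgebra will be factorizable once it is shown to be a bialgebra. Invariance of $\omega$ (Definition~\ref{quadratic RB dendriform algebra}) is, via the same pairing identities, equivalent to the Lemma~\ref{inv} relations for $I=\huaJ_\omega$, so $a$ is $(L_\succ,R_\prec)$-invariant.

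It then remains to prove $r_{\pm}$ are dendriform homomorphisms, after which Theorem~\ref{heart2} delivers the D-equation and the full statement. I would factor $r_{-}=P\circ\psi$, $r_{+}=(P+\lambda\Id)\circ\psi$ with $\psi=\frac1\lambda\huaJ_\omega$, noting that $P$ and $P+\lambda\Id$ are both homomorphisms from the descendent dendriform algebra $A_P$ to $(A,\prec_A,\succ_A)$ (the first by construction of $A_P$, the second by a direct check from the Rota-Baxter identity). The crux is to show $\psi\colon(A^*,\prec_r,\succ_r)\to A_P$ intertwines the products. Using \eqref{succr}--\eqref{precr} for $\succ_r,\prec_r$ together with the intertwining relations $\huaJ_\omega\huaR_A^*(x)\eta=x\succ_A\huaJ_\omega\eta$, $\huaJ_\omega\huaL_A^*(x)\eta=(\huaJ_\omega\eta)\prec_A x$ and their consequences $\huaJ_\omega L_{\prec_A}^*(x)\eta=-(\huaJ_\omega\eta)\succ_A x$, $\huaJ_\omega R_{\succ_A}^*(x)\eta=-x\prec_A\huaJ_\omega\eta$, one obtains $\psi(\xi\succ_r\eta)=(P\psi\xi+\lambda\psi\xi)\succ_A\psi\eta+\psi\xi\succ_A P\psi\eta=\psi\xi\succ_P\psi\eta$, and symmetrically for $\prec$, so that the spurious factor $\frac1\lambda$ cancels exactly. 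Since $\psi$ is bijective, this simultaneously shows $(A^*,\prec_r,\succ_r)$ is a dendriform algebra and that $r_{\pm}$ are homomorphisms. I expect this last intertwining computation to be the main obstacle: it requires assembling precisely the right consequences of $\omega$-invariance for $\huaJ_\omega$ and matching them against the descendent products; everything else reduces to Theorem~\ref{heart2} or to routine adjoint manipulations.
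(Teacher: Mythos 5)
Your proposal is correct and follows essentially the same route as the paper's proof: Theorem~\ref{heart2} as the bridge in both directions, the same cancellation argument (with $x=I\xi$, $y=I\eta$) for the Rota-Baxter identity, Lemma~\ref{inv} for invariance and the same adjoint computation for compatibility, and in the converse the identifications $r_-=\frac{1}{\lambda}P\circ\huaJ_\omega$, $I=\huaJ_\omega$ followed by showing $\frac{1}{\lambda}\huaJ_\omega$ is a dendriform isomorphism onto the descendent algebra $A_P$ so that $r_\pm$ factor as homomorphisms through $A_P$. The paper organizes the final intertwining step as two separate evaluations of $\huaJ_\omega(\xi\succ_r\eta)$ and $(\huaJ_\omega\xi)\succ_P(\huaJ_\omega\eta)$, but this is the same computation you describe.
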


\begin{proof}
Since $r_{+},r_{-}:(A^{\ast},\prec_{r},\succ_{r}) \longrightarrow (A,\prec_A,\succ_A)$ are both dendriform algebra homomorphisms, for all $x,y\in A$, we have
\begin{eqnarray*}
I(I^{-1}x \succ_{r} I^{-1}y )&=&(r_{+}-r_{-})(I^{-1}x \succ_{r} I^{-1}y )\\
\nonumber&=&((I+r_{-})I^{-1}x) \succ_{A} ((I+r_{-})I^{-1}y)- (r_{-}I^{-1}x)\succ_{A} (r_{-}I^{-1}y)\\
\nonumber&=& (r_{-}I^{-1}x) \succ_{A} y+x \succ_{A} (r_{-}I^{-1}y)+x \succ_{A} y.
\end{eqnarray*}
Therefore, we have
\begin{eqnarray*}
P(P(x)\succ_{A} y+x \succ_{A} P(y)+\lambda x \succ_{A} y)&=& \lambda^{2} r_{-}I^{-1}\Big((r_{-}I^{-1}x) \succ_{A} y+x \succ_{A} (r_{-}I^{-1}y)+x \succ_{A} y \Big)\\
&=& \lambda^{2}r_{-}(I^{-1}x \succ_{r} I^{-1}y )\\
&=& \lambda^{2}(r_{-}I^{-1}x \succ_{A} r_{-}I^{-1}y )\\
&=& P(x) \succ_{A} P(y).
\end{eqnarray*}
Similarly, we also have
$$ P(P(x)\prec_{A} y+x \prec_{A} P(y)+\lambda x \prec_{A} y)=P(x) \prec_{A} P(y), $$
which implies that $P$ is a Rota-Baxter operator of weight $\lambda$ on the dendriform algebra $(A,\prec_A,\succ_A)$.

Next we show that $(A,P,\omega_{I})$ is a quadratic Rota-Baxter dendriform algebra. Since $I^{\ast}=-I$, we have
$$ \omega_{I}(x,y)=\langle I^{-1}x,y \rangle=-\langle x,I^{-1}y \rangle=-\omega_{I}(y,x),$$
 which means that $ \omega_{I}$ is skew-symmetric.

Since the skew-symmetric part $a$ of $r$ is $(L_\succ,R_\prec)$-invariant, by Lemma \ref{inv},
we have $I^{-1}\circ L_{\succ_A}(x)=\huaR_A^* (x) \circ I^{-1}$ and $\huaL_A^* (x) \circ I^{-1}=I^{-1}\circ R_{\prec_A}(x)$. Thus
\begin{eqnarray*}
\omega_{I}(x\succ_A y,z)-\omega_{I}(y,z\ast x)&=&\langle I^{-1}(x\succ_A y),z  \rangle-\langle I^{-1}(y),z\ast x \rangle\\
&=&\langle I^{-1}\circ L_{\succ_A}(x)(y)-\huaR_A^* \circ I^{-1}(y),z \rangle\\
&=&0,\\
\omega_{I}(x,y\prec_A z)+\omega_{I}(y,z\ast x)&=&\langle I^{-1}(x),y\prec_A z  \rangle+\langle I^{-1}(y),z\ast x \rangle\\
&=&-\langle x,I^{-1}\circ R_{\prec_A}(z)(y) \rangle+\langle x,\huaL_A^* (z) \circ I^{-1}(y) \rangle\\
&=&0,
\end{eqnarray*}
which implies that \eqref{invariant3} holds.

Moreover, by using $r^{\ast}_{-}=r_{+}$ and $I=r_{+}-r_{-}$, we have
\begin{eqnarray*}
&&\omega_{I}(x,Py)+\omega_{I}(Px,y)+\lambda \omega_{I}(x,y)\\
&=&\lambda\Big(\langle I^{-1}(x), r_{-}\circ I^{-1}(y) \rangle+\langle  I^{-1}\circ r_{-}\circ I^{-1}(x),y \rangle+\langle I^{-1}x,y \rangle \Big)\\
&=& \lambda\langle(-I^{-1}\circ r_{+}\circ I^{-1}+I^{-1}\circ r_{-}\circ I^{-1}+ I^{-1})(x),y \rangle\\
&=& 0,
\end{eqnarray*}
which implies that \eqref{compat condition} holds. Therefore, $(A,P,\omega_{I})$ is a quadratic Rota-Baxter dendriform algebra of weight $\lambda$.

Conversely, since $\omega$ is skew-symmetric, we have $\huaJ_{\omega}=-\huaJ_{\omega}^{\ast}$. By the fact that $\omega(x,Py)+\omega(Px,y)+\lambda \omega(x,y)=0,$ we have
	$$ \langle \huaJ_{\omega}^{-1}x,P(y)\rangle+\langle \huaJ_{\omega}^{-1}\circ P(x),y \rangle+\lambda\langle \huaJ_{\omega}^{-1}x,y \rangle=0, $$
	which implies that $P^{\ast}\circ \huaJ_{\omega}^{-1}+\huaJ_{\omega}^{-1}\circ P+\lambda \huaJ_{\omega}^{-1}=0, $ and then
	$$ \huaJ_{\omega}\circ P^{\ast}+P\circ \huaJ_{\omega}+\lambda \huaJ_{\omega}=0.$$
Thus we have
	$$ r_{-}:=r_{+}^{\ast}=\frac{1}{\lambda}(-\huaJ_{\omega}\circ P^{\ast}-\lambda \huaJ_{\omega})=\frac{1}{\lambda} P\circ \huaJ_{\omega}, $$
	and $\huaJ_{\omega}=r_{+}-r_{-}$. Define multiplications $\prec_r,\succ_r$ on $A^{\ast}$ by
\begin{eqnarray*}
\xi \succ_r \eta&=&\huaR_A^* (r_+(\xi)) \eta-L_{\prec_A}^*(r_{-}(\eta))\xi;\\
\xi \prec_r \eta&=&\huaL_A^* (r_+(\eta)) \xi-R_{\succ_A}^*(r_{-}(\xi))\eta, \quad \forall \xi,\eta\in A^*.
\end{eqnarray*}
By the fact that $\omega(x\succ_A y,z)=-\omega(x,y\prec_A z)=\omega(y,z\ast x)$, we have
\begin{eqnarray*}
\langle L_{\succ_A}^*(x)\circ \huaJ_{\omega}^{-1}(z),y \rangle&=&\langle \huaJ_{\omega}^{-1}\circ \huaR_A(x)z,y \rangle;\\
\langle R_{\prec_A}^*(z)\circ \huaJ_{\omega}^{-1}(x),y \rangle&=&\langle \huaJ_{\omega}^{-1}\circ \huaL_A(z)x,y \rangle,
\end{eqnarray*}
which implies that  $ \huaJ_{\omega}\circ R_{\prec_A}^*(x)=\huaL_A(x)\circ \huaJ_{\omega} $ and $ \huaJ_{\omega}\circ L_{\succ_A}^*(x)=\huaR_A(x)\circ \huaJ_{\omega}.$

Now we show that the following equation holds:
	\begin{eqnarray}\label{den alg iso1}
		\frac{1}{\lambda} \huaJ_{\omega}(\xi \succ_{r} \eta)=(\frac{1}{\lambda}\huaJ_{\omega}\xi)\succ_{P}(\frac{1}{\lambda}\huaJ_{\omega}\eta).
	\end{eqnarray}

On the one hand, we have
\begin{eqnarray*}
	\huaJ_{\omega}(\xi\succ_{r} \eta)&=&\huaJ_{\omega} \Big( \huaR_A^* (r_+(\xi)) \eta-L_{\prec_A}^*(r_{-}(\eta))\xi \Big)\\	&=&  L_{\succ_A}(r_+(\xi))\circ\huaJ_{\omega}(\eta)
-\huaJ_{\omega}\circ (L_{\prec_A}^{\ast}(r_{-}\eta))\xi\\
    &=& r_+(\xi)\succ_A \huaJ_{\omega}(\eta)-\huaJ_{\omega}\circ \Big((\huaL_A^{\ast}-L_{\succ_A}^{\ast}) (r_{-}\eta)\Big)\xi\\
	&=& r_+(\xi)\succ_A \huaJ_{\omega}(\eta)+ \huaJ_{\omega}(\xi)\succ_A r_-(\eta)\\
    &=& r_+(\xi)\succ_A (r_{+} - r_{-})(\eta)+ (r_{+} - r_{-})(\xi)\succ_A r_-(\eta)\\
	&=& r_{+}(\xi)\succ_A r_{+}(\eta)-r_{-}(\xi)\succ_A r_{-}(\eta).
\end{eqnarray*}
On the other hand, we have
\begin{eqnarray*}
	&&(\huaJ_{\omega}\xi)\succ_{P}(\huaJ_{\omega}\eta)\\
	&=&(P\huaJ_{\omega}\xi)\succ_A(\huaJ_{\omega}\eta)+(\huaJ_{\omega}\xi)\succ_A(P\huaJ_{\omega}\eta)
	+\lambda(\huaJ_{\omega}\xi)\succ_A(\huaJ_{\omega}\eta)\\
	&=& \lambda (r_{-}\xi)\succ_A(r_{+}\eta-r_{-}\eta)+\lambda(r_{+}\xi-r_{-}\xi)\succ_A (r_{-}\eta)+\lambda(r_{+}\xi-r_{-}\xi)\succ_A(r_{+}\eta-r_{-}\eta)\\
	&=& \lambda r_{+}(\xi)\succ_A r_{+}(\eta)-\lambda r_{-}(\xi)\succ_A r_{-}(\eta),
\end{eqnarray*}
which implies that  \eqref{den alg iso1} holds. Similarly, we can also have
	\begin{eqnarray}\label{den alg iso2}
		\frac{1}{\lambda} \huaJ_{\omega}(\xi \prec_{r} \eta)=(\frac{1}{\lambda}\huaJ_{\omega}\xi)\prec_{P}(\frac{1}{\lambda}\huaJ_{\omega}\eta).
	\end{eqnarray}
Thus $(A^{\ast},\prec_{r},\succ_{r})$ is a dendriform algebra and $\frac{1}{\lambda} \huaJ_{\omega}$ is a dendriform algebra isomorphism from $(A^{\ast},\prec_{r},\succ_{r})$ to $(A,\prec_{P},\succ_{P})$.

 Finally, by the fact that $P+\lambda\Id,P:(A,\prec_{P},\succ_{P})\longrightarrow (A,\prec_A,\succ_A)$ are both dendriform algebra homomorphisms, we deduce that
$$	 r_{+}:=\frac{1}{\lambda} (P+\lambda \Id)\circ \huaJ_{\omega},  \quad r_{-}:=\frac{1}{\lambda} P\circ \huaJ_{\omega}:(A^{\ast},\prec_{r},\succ_{r}) \longrightarrow (A,\prec_A,\succ_A)$$
	are both dendriform algebra homomorphisms. Therefore, by Theorem \ref{heart2}, $(A,A_{r}^{\ast})$ is a quasi-triangular dendriform D-bialgebra. Since $\huaJ_{\omega} = r_{+}-r_{-}$  is an isomorphism, the dendriform D-bialgebra $(A,A_{r}^{\ast})$ is factorizable.
\end{proof}

It is straightforward to check that if $P:A \longrightarrow A$ is a Rota-Baxter operator of weight $\lambda$ on a dendriform algebra $A$, then
\begin{eqnarray}
\widetilde{P}:=-\lambda\Id-P
\end{eqnarray}
is also a Rota-Baxter operator of weight $\lambda$.
\begin{cor}
Let $(A,A_{r}^{\ast})$ be a factorizable dendriform D-bialgebra with $I=r_{+}-r_{-}$. Then $(A,\widetilde{P},\omega_{I})$ is also a quadratic Rota-Baxter dendriform algebra of weight $\lambda$, where $\widetilde{P}=-\lambda\Id-P=-\lambda r_{+}\circ I^{-1}$ and $\omega_{I}\in \wedge^{2} A^{\ast}$ is defined by
\eqref{omegaI}.
\end{cor}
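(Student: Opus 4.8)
The plan is to deduce this corollary directly from Theorem \ref{Factorizable dendriform algebra} together with the observation recorded immediately before it, namely that $\widetilde{P}=-\lambda\Id-P$ is again a Rota-Baxter operator of weight $\lambda$ whenever $P$ is. Theorem \ref{Factorizable dendriform algebra} already supplies the quadratic Rota-Baxter dendriform algebra $(A,P,\omega_I)$, so the underlying quadratic dendriform algebra $(A,\prec_A,\succ_A,\omega_I)$ stays fixed throughout and only the Rota-Baxter operator is being replaced. The work therefore reduces to two verifications: that $\widetilde{P}$ coincides with the displayed expression $-\lambda r_{+}\circ I^{-1}$, and that the compatibility condition \eqref{compat condition} persists after replacing $P$ by $\widetilde{P}$.

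First I would confirm the formula for $\widetilde{P}$. Since $I=r_{+}-r_{-}$ is invertible we have $\Id=(r_{+}-r_{-})\circ I^{-1}$, and by \eqref{P2} we have $P=\lambda r_{-}\circ I^{-1}$; hence
\begin{eqnarray*}
\widetilde{P}&=&-\lambda\Id-P\\
&=&-\lambda(r_{+}-r_{-})\circ I^{-1}-\lambda r_{-}\circ I^{-1}\\
&=&-\lambda r_{+}\circ I^{-1}.
\end{eqnarray*}
By the observation recalled above, $\widetilde{P}$ is a Rota-Baxter operator of weight $\lambda$ on the dendriform algebra $(A,\prec_A,\succ_A)$. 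The invariant nondegenerate skew-symmetric form $\omega_I$ is exactly the one already furnished by Theorem \ref{Factorizable dendriform algebra}, so $(A,\prec_A,\succ_A,\omega_I)$ remains a quadratic dendriform algebra without any further argument.

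It then remains to check \eqref{compat condition} for $\widetilde{P}$. Substituting $\widetilde{P}=-\lambda\Id-P$, expanding by bilinearity, and collecting the $\omega_I(x,y)$-terms, I would obtain
\begin{eqnarray*}
\omega_I(\widetilde{P}x,y)+\omega_I(x,\widetilde{P}y)+\lambda\omega_I(x,y)
&=&-\lambda\omega_I(x,y)-\big(\omega_I(Px,y)+\omega_I(x,Py)\big)\\
&=&0,
\end{eqnarray*}
where the last step uses the compatibility relation $\omega_I(Px,y)+\omega_I(x,Py)+\lambda\omega_I(x,y)=0$ for $P$ established in Theorem \ref{Factorizable dendriform algebra}. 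This yields that $(A,\widetilde{P},\omega_I)$ is a quadratic Rota-Baxter dendriform algebra of weight $\lambda$.

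There is no genuine obstacle here: the whole content is that the involution $P\mapsto-\lambda\Id-P$ preserves both the Rota-Baxter identity and the compatibility condition, while leaving the quadratic dendriform structure untouched. The only point demanding care is the sign bookkeeping in the final display, where the three copies of $\omega_I(x,y)$ produced by the two substitutions and the weight term must collapse to a single $-\lambda\omega_I(x,y)$ that cancels precisely against the $P$-compatibility relation.
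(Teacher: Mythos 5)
Your proposal is correct and follows essentially the same route as the paper: both rely on the remark that $\widetilde{P}=-\lambda\Id-P$ is again a Rota-Baxter operator of weight $\lambda$, keep the quadratic structure $(A,\prec_A,\succ_A,\omega_I)$ from Theorem \ref{Factorizable dendriform algebra} unchanged, and verify the compatibility condition \eqref{compat condition} for $\widetilde{P}$ by the same expansion and cancellation against the $P$-compatibility relation. Your additional check that $-\lambda\Id-P=-\lambda r_{+}\circ I^{-1}$ is a small bonus the paper leaves implicit.
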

\begin{proof}
Since $(A,A_{r}^{\ast})$ is a factorizable dendriform D-bialgebra,  by Theorem \ref{Factorizable dendriform algebra},  $B$ satisfies the compatibility  condition \eqref{compat condition}.  Thus we have
\begin{eqnarray*}
&&\omega_{I}(x,\widetilde{P}y)+\omega_{I}(\widetilde{P}x,y)+\lambda \omega_{I}(x,y)\\
&=& -\omega_{I}(x,Py)-\lambda\omega_{I}(x,y)-\omega_{I}(Px,y)-\lambda \omega_{I}(x,y)+\lambda \omega_{I}(x,y)\\
&=& -\omega_{I}(x,Py)-\omega_{I}(Px,y)-\lambda\omega_{I}(x,y)\\
&=& 0.
\end{eqnarray*}
This implies that $(A,\widetilde{P},\omega_{I})$ is  a quadratic Rota-Baxter dendriform algebra of weight $\lambda$.
\end{proof}

By Theorem \ref{thm:den-assconnes} and Theorem \ref{Factorizable dendriform algebra}, we have
\begin{cor}\label{cor:assconnes-facden}
	Let $(A,\ast,P,\omega)$ be a Rota-Baxter associative algebra  with a nondegenerate Connes cocycle $\omega$ of weight $\lambda~(\lambda\ne 0)$, and $ \huaJ_\omega: A^{\ast}\longrightarrow A $ the induced linear isomorphism given by $\langle\huaJ_\omega^{-1}x,y \rangle :=\omega(x,y).$
 Then $r \in A\otimes A $ defined by
	$$ r_{+}:=\frac{1}{\lambda} (P+\lambda \Id)\circ \huaJ_\omega:A^{\ast} \longrightarrow A, \quad r_{+}(\xi)=r(\xi,\cdot), \quad \forall \xi\in A^{\ast}, $$
 gives rise to a factorizable dendriform D-bialgebra $(A,A_{r}^{\ast})$.
\end{cor}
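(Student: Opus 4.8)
The plan is to obtain the conclusion by composing the two equivalences already established, namely Theorem \ref{thm:den-assconnes} and Theorem \ref{Factorizable dendriform algebra}, so that no fresh computation is required. The starting datum is a Rota-Baxter associative algebra $(A,\ast,P,\omega)$ with a nondegenerate Connes cocycle $\omega$ of weight $\lambda\neq 0$, together with the induced isomorphism $\huaJ_\omega\colon A^{\ast}\to A$ determined by $\langle \huaJ_\omega^{-1}x,y\rangle=\omega(x,y)$.

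First I would apply the converse part of Theorem \ref{thm:den-assconnes}. This endows $A$ with dendriform multiplications $\prec_A,\succ_A$ defined by \eqref{eq:den-ass1}--\eqref{eq:den-ass2}, whose sub-adjacent associative algebra is exactly $(A,\ast)$, and it guarantees that $(A,\prec_A,\succ_A,P,\omega)$ is a quadratic Rota-Baxter dendriform algebra of weight $\lambda$. The essential point is that both the Rota-Baxter identity for $P$ on the dendriform algebra and the invariance and compatibility conditions for $\omega$ are furnished automatically by that theorem, so this step costs nothing beyond a citation.

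Next, since $\lambda\neq 0$, I would feed the resulting quadratic Rota-Baxter dendriform algebra into the converse direction of Theorem \ref{Factorizable dendriform algebra}. The linear isomorphism appearing there is defined by the identical prescription $\langle \huaJ_\omega^{-1}x,y\rangle=\omega(x,y)$, so it coincides with the $\huaJ_\omega$ in the statement, and the formula $r_{+}:=\frac{1}{\lambda}(P+\lambda\Id)\circ\huaJ_\omega$ is literally the one produced by that theorem. Consequently $r$ satisfies the D-equation $r_{12}\ast r_{13}-r_{13}\prec r_{23}-r_{23}\succ r_{12}=0$ and $(A,A_r^{\ast})$ is a factorizable dendriform D-bialgebra.

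The only thing to verify is that the two theorems are being applied to the same underlying structures, that is, that the dendriform algebra manufactured from $\omega$ in the first step is precisely the one whose factorizability is detected in the second. This is immediate because both steps refer to the multiplications \eqref{eq:den-ass1}--\eqref{eq:den-ass2} and to the same cocycle $\omega$, and the two prescriptions for $\huaJ_\omega$ agree verbatim. I therefore do not anticipate any genuine obstacle: the content of the corollary is simply the transitivity of the two correspondences, and the proof reduces to a single invocation of Theorems \ref{thm:den-assconnes} and \ref{Factorizable dendriform algebra}.
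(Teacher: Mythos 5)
Your proposal is correct and matches the paper exactly: the corollary is stated there immediately after the phrase ``By Theorem \ref{thm:den-assconnes} and Theorem \ref{Factorizable dendriform algebra}, we have,'' so the intended proof is precisely the composition you describe, first producing the quadratic Rota-Baxter dendriform algebra via the converse part of Theorem \ref{thm:den-assconnes} and then invoking the converse part of Theorem \ref{Factorizable dendriform algebra} with the same $\huaJ_\omega$ and the same formula for $r_+$. Your additional check that both steps refer to the identical dendriform multiplications \eqref{eq:den-ass1}--\eqref{eq:den-ass2} and the identical map $\huaJ_\omega$ is exactly the (implicit) glue the paper relies on.
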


By Corollary \ref{cor:assconnes-facden} and Example \ref{2-dimension}, we have

\begin{ex}
Let $(A,\ast)$ be a 2-dimensional associative algebra with a basis $\{e_1,e_2\}$ whose nonzero multiplications are given as follows:
$$ e_1\ast e_1=e_1, \quad  e_1\ast e_2=e_2.$$
Then $r=e_2\otimes e_1\in A\otimes A$ satisfies the D-equation and gives rise to a factorizable dendriform D-bialgebra $(A,A_{r}^{\ast})$.
\end{ex}

At the end of this subsection, we show that a quadratic Rota-Baxter dendriform algebra can give rise to an isomorphism from the regular representation to the coregular representation of a Rota-Baxter dendriform algebra. First, we introduce the definition of a representation of a Rota-Baxter dendriform algebra.

\begin{defi}
A {\bf representation of a Rota-Baxter dendriform algebra} $(A,\prec_A,\succ_A,P)$ of weight $\lambda$ on a vector space $V$ with respect to a linear transformation $T\in \gl(V)$ is a representation $(l_\succ,r_\succ,l_\prec,r_\prec)$ of the dendriform algebra $(A,\prec_A,\succ_A)$ on $V$, satisfying
\begin{eqnarray*}
l_\succ(Px)(Tu)&=&T\Big(l_\succ(Px)u+l_\succ(x)(Tu)+\lambda l_\succ(x)u\Big),\\
r_\succ(Px)(Tu)&=&T\Big(r_\succ(Px)u+r_\succ(x)(Tu)+\lambda r_\succ(x)u\Big),\\
l_\prec(Px)(Tu)&=&T\Big(l_\prec(Px)u+l_\prec(x)(Tu)+\lambda l_\prec(x)u\Big),\\
r_\prec(Px)(Tu)&=&T\Big(r_\prec(Px)u+r_\prec(x)(Tu)+\lambda r_\prec(x)u\Big),\quad\forall x\in A,u\in V.
\end{eqnarray*}
We will denote a representation by $(V,T,l_\succ,r_\succ,l_\prec,r_\prec)$.
\end{defi}

\begin{ex}
	Let $(A,\prec_A,\succ_A,P)$ be a Rota-Baxter dendriform algebra of weight $\lambda$. Then $(A,P,$
$L_{\succ_A},R_{\succ_A},L_{\prec_A},R_{\prec_A})$ is a representation of the Rota-Baxter dendriform algebra $(A,\prec_A,\succ_A,P)$, which is called the {\bf regular representation} of $(A,\prec_A,\succ_A,P)$.
\end{ex}

Let $(V,T,l_\succ,r_\succ,l_\prec,r_\prec)$ be a representation of a Rota-Baxter dendriform algebra $(A,\prec_A,\succ_A,P)$ of weight $\lambda$. Since $(l_\succ,r_\succ,l_\prec,r_\prec)$ is a representation of the dendriform algebra $A$ on $V$, we have the semidirect product dendriform algebra $A \ltimes V${\rm(\cite{Bai})}. Then define the map
$$ P\oplus T:A\ltimes V\rightarrow A\ltimes V, \quad x+u\mapsto Px+Tu. $$

\begin{pro}
With the above notations, $(A\ltimes V,P\oplus T)$ is a Rota-Baxter dendriform algebra of weight $\lambda$, called the semidirect product of $(A,\prec_A,\succ_A,P)$ and the representation $(V,T,l_\succ,r_\succ,l_\prec,r_\prec)$.
\end{pro}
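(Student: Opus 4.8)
The plan is to check directly that $P\oplus T$ is a Rota-Baxter operator of weight $\lambda$ on the semidirect product dendriform algebra $A\ltimes V$, i.e.\ that it satisfies the two defining identities of Definition \ref{rRBo}(i). Writing $\alpha=x+u$ and $\beta=y+v$ with $x,y\in A$ and $u,v\in V$, I would verify
$$(P\oplus T)(\alpha)\succ_{A\ltimes V}(P\oplus T)(\beta)=(P\oplus T)\Big((P\oplus T)(\alpha)\succ_{A\ltimes V}\beta+\alpha\succ_{A\ltimes V}(P\oplus T)(\beta)+\lambda\,\alpha\succ_{A\ltimes V}\beta\Big)$$
together with the analogous identity with $\prec_{A\ltimes V}$ in place of $\succ_{A\ltimes V}$.

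Expanding the left-hand side of the $\succ$-identity via the semidirect product formula gives the $A$-component $Px\succ_A Py$ and the $V$-component $l_\succ(Px)(Tv)+r_\succ(Py)(Tu)$. Expanding the argument of $P\oplus T$ on the right-hand side produces the $A$-component $Px\succ_A y+x\succ_A Py+\lambda\,x\succ_A y$ and a $V$-component consisting of the six terms $l_\succ(Px)v+l_\succ(x)(Tv)+\lambda l_\succ(x)v$ together with $r_\succ(Py)u+r_\succ(y)(Tu)+\lambda r_\succ(y)u$. Applying $P\oplus T$ then decouples into two independent checks: on the $A$-part, applying $P$ and using that $P$ is a Rota-Baxter operator of weight $\lambda$ on $(A,\prec_A,\succ_A)$ returns exactly $Px\succ_A Py$, matching the left-hand side.

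On the $V$-part, the key step is to regroup the six terms into the two blocks $\big(l_\succ(Px)v+l_\succ(x)(Tv)+\lambda l_\succ(x)v\big)$ and $\big(r_\succ(Py)u+r_\succ(y)(Tu)+\lambda r_\succ(y)u\big)$: applying $T$ to the first block and invoking the representation relation for $l_\succ$ collapses it to $l_\succ(Px)(Tv)$, while applying $T$ to the second block and invoking the representation relation for $r_\succ$ (with $y$ playing the role of $x$) collapses it to $r_\succ(Py)(Tu)$, thereby recovering the $V$-component of the left-hand side. The $\prec$-identity is handled verbatim, now using the Rota-Baxter relation of $P$ for $\prec_A$ and the representation relations for $l_\prec$ and $r_\prec$. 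I expect the only real content to be this regrouping: one must notice that the six mixed terms split precisely into the two triples matching the defining relations of a representation of a Rota-Baxter dendriform algebra, so that the compatibility between $T$ and $P$ encoded there does all the work, the remainder being bookkeeping with the semidirect product multiplications.
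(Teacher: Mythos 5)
Your proposal is correct and is exactly the direct verification that the paper compresses into the single line ``It follows by a direct calculation'': the $A$-component reduces to the Rota--Baxter identity for $P$, and the six $V$-terms regroup into the two triples governed by the defining relations of a representation of a Rota--Baxter dendriform algebra, for $l_\succ,r_\succ$ (and likewise $l_\prec,r_\prec$). No gaps; the regrouping you identify is indeed the only substantive step.
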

\begin{proof}
	It follows by a direct calculation.
\end{proof}

\begin{defi}
Let $(V,T,l_\succ,r_\succ,l_\prec,r_\prec)$ and $(V',T',l'_\succ,r'_\succ,l'_\prec,r'_\prec)$ be two representations of a Rota-Baxter dendriform algebra  $(A,\prec_A,\succ_A,P)$ of weight $\lambda$. A {\bf homomorphism} from $(V,T,l_\succ,r_\succ,l_\prec,r_\prec)$ to $(V',T',l'_\succ,r'_\succ,l'_\prec,r'_\prec)$ is a linear map $\phi:V\longrightarrow V'$ such that for all $x\in A$, we have
\begin{equation*}
\left\{ \begin{aligned}
\phi\circ T&=T' \circ \phi;\quad &\\
\phi\circ l_\succ(x)&=l'_\succ(x)\circ \phi;\quad
&\phi\circ r_\succ(x)=r'_\succ(x)\circ \phi;\\
\phi\circ l_\prec(x)&=l'_\prec(x)\circ \phi;\quad
&\phi\circ r_\prec(x)=r'_\prec(x)\circ \phi.
\end{aligned}\right.
\end{equation*}
\end{defi}

Rota-Baxter dendriform algebras of weight $\lambda$ also have coregular representations.
\begin{thm}\label{coregular representation}
Let $(A,\prec_A,\succ_A,P)$ be a Rota-Baxter dendriform algebra of weight $\lambda$.
Then $(A^{\ast},-\lambda\Id$ $-P^{\ast},\huaR_A^{\ast},-L_{\prec_A}^{\ast},-R_{\succ_A}^{\ast},\huaL_A^{\ast})$ is a representation, which is called the {\bf coregular representation} of $(A,\prec_A,\succ_A,P)$.

Moreover, if $(A,\prec_A,\succ_A,P,\omega)$ is a quadratic Rota-Baxter dendriform algebra of weight $\lambda$, then the linear map $\omega^{\sharp}:A \longrightarrow A^{\ast} $ defined by $\langle \omega^{\sharp}(x),y \rangle=\omega(x,y) $ is an isomorphism from the regular representation $(A,P,L_{\succ_A},R_{\succ_A},L_{\prec_A},R_{\prec_A})$ to the coregular representation $(A^{\ast},-\lambda\Id-P^{\ast},\huaR_A^{\ast},-L_{\prec_A}^{\ast},$ $-R_{\succ_A}^{\ast},\huaL_A^{\ast})$.
\end{thm}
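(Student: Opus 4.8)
The plan is to treat the two assertions separately. For the first, I would start from Example~\ref{re-core-rep}, which already tells us that $(\huaR_A^\ast,-L_{\prec_A}^\ast,-R_{\succ_A}^\ast,\huaL_A^\ast)$ is a representation of the dendriform algebra $(A,\prec_A,\succ_A)$, using $\huaR_A^\ast=R_{\succ_A}^\ast+R_{\prec_A}^\ast$ and $\huaL_A^\ast=L_{\succ_A}^\ast+L_{\prec_A}^\ast$. Hence the only thing left to check is that these four operators, together with $T:=-\lambda\Id-P^\ast$, satisfy the four Rota-Baxter compatibility equations in the definition of a representation of a Rota-Baxter dendriform algebra. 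I would also record at the outset the fact, observed in the proof of Theorem~\ref{thm:den-assconnes}, that $P$, being a Rota-Baxter operator of weight $\lambda$ on the dendriform algebra, is a Rota-Baxter operator of weight $\lambda$ on the sub-adjacent associative algebra $(A,\ast)$.

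Each of the four compatibility equations I would verify by pairing both sides against an arbitrary $y\in A$ and expanding $T=-\lambda\Id-P^\ast$. After simplification the weight-$\lambda$ contributions cancel, and each equation collapses to a single Rota-Baxter identity on $A$. Concretely, the $\huaR_A^\ast$-equation and the $\huaL_A^\ast$-equation both reduce to the Rota-Baxter relation for $\ast$,
\[
Py\ast Px=P\big(Py\ast x+y\ast Px+\lambda\, y\ast x\big),
\]
the $-L_{\prec_A}^\ast$-equation reduces to the Rota-Baxter relation for $\prec_A$, and the $-R_{\succ_A}^\ast$-equation reduces to the Rota-Baxter relation for $\succ_A$. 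Since $P$ is a Rota-Baxter operator of weight $\lambda$ for $\prec_A$, $\succ_A$ and $\ast$, all four equations hold, so $(A^\ast,T,\huaR_A^\ast,-L_{\prec_A}^\ast,-R_{\succ_A}^\ast,\huaL_A^\ast)$ is the desired coregular representation.

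For the second assertion, nondegeneracy of $\omega$ immediately gives that $\omega^\sharp:A\to A^\ast$ is a linear isomorphism, so it suffices to check the five homomorphism conditions. Pairing against arbitrary elements and using $\langle\omega^\sharp(x),y\rangle=\omega(x,y)$, the condition $\omega^\sharp\circ P=(-\lambda\Id-P^\ast)\circ\omega^\sharp$ becomes exactly the compatibility condition \eqref{compat condition}, while each of the four intertwining conditions for $L_{\succ_A},R_{\succ_A},L_{\prec_A},R_{\prec_A}$ becomes one of the invariance identities \eqref{invariant3} (after using the skew-symmetry of $\omega$). For instance, $\omega^\sharp\circ L_{\succ_A}(x)=\huaR_A^\ast(x)\circ\omega^\sharp$ unwinds to $\omega(x\succ_A y,z)=\omega(y,z\ast x)$, and $\omega^\sharp\circ R_{\succ_A}(x)=-L_{\prec_A}^\ast(x)\circ\omega^\sharp$ unwinds to $\omega(y\succ_A x,z)=-\omega(y,x\prec_A z)$; the remaining two follow the same way, combining \eqref{invariant3} with skew-symmetry. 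Hence $\omega^\sharp$ intertwines all the structure maps and is an isomorphism from the regular representation to the coregular representation.

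I expect the genuine obstacle to be purely bookkeeping in the first assertion: the four pairing computations are long, and one must keep careful track of the signs attached to $-L_{\prec_A}^\ast$ and $-R_{\succ_A}^\ast$ and of the two separate contributions $-\lambda\Id$ and $-P^\ast$ of $T$, since it is only after a precise cancellation of the weight-$\lambda$ terms that each identity telescopes down to the corresponding Rota-Baxter relation. The second assertion is then essentially automatic once \eqref{invariant3} and \eqref{compat condition} are in hand.
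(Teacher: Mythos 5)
Your proposal is correct and takes essentially the same route as the paper's proof: the paper also verifies the four Rota--Baxter compatibility identities by pairing against $y\in A$ and reducing the $\huaR_A^{\ast}$- and $\huaL_A^{\ast}$-equations to the Rota--Baxter identity for $\ast$ and the $-L_{\prec_A}^{\ast}$- and $-R_{\succ_A}^{\ast}$-equations to those for $\prec_A$ and $\succ_A$ respectively. For the second assertion the paper likewise derives the four intertwining identities from \eqref{invariant3} together with skew-symmetry of $\omega$, and the condition $\omega^{\sharp}\circ P=(-\lambda\Id-P^{\ast})\circ\omega^{\sharp}$ from \eqref{compat condition}, exactly as you describe.
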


\begin{proof}
For all $\xi \in A^{\ast}$ and $x,y \in A$, since $P$ is a Rota-Baxter operator of weight $\lambda$ on the dendriform algebra $A$, we have
\begin{eqnarray*}
&&\langle \huaR_A^{\ast}(Px) \big((-\lambda\Id-P^{\ast})\xi \big)-(-\lambda\Id-P^{\ast})
\Big(\huaR_A^{\ast}(Px)\xi+ \huaR_A^{\ast}(x)\big((-\lambda\Id-P^{\ast})\xi\big) +\lambda \huaR_A^{\ast}(x) \xi \Big),y \rangle\\
&=& \langle \xi,Py \ast Px-P(y\ast Px+Py \ast x+\lambda y \ast x)  \rangle\\
&=&0;\\
&&\langle -L_{\prec_A}^{\ast}(Px) \big((-\lambda\Id-P^{\ast})\xi \big)-(-\lambda\Id-P^{\ast})
\Big(-L_{\prec_A}^{\ast}(Px)\xi-L_{\prec_A}^{\ast}(x)\big((-\lambda\Id-P^{\ast})\xi\big) -\lambda L_{\prec_A}^{\ast}(x) \xi \Big),y \rangle\\
&=& \langle \xi,-Px \prec_A Py+P(x\prec_A Py+Px \prec_A y+\lambda x \prec_A y)  \rangle\\
&=&0;\\
&&\langle -R_{\succ_A}^{\ast}(Px) \big((-\lambda\Id-P^{\ast})\xi \big)-(-\lambda\Id-P^{\ast})
\Big(-R_{\succ_A}^{\ast}(Px)\xi-R_{\succ_A}^{\ast}(x)\big((-\lambda\Id-P^{\ast})\xi\big) -\lambda R_{\succ_A}^{\ast}(x) \xi \Big),y \rangle\\
&=& \langle \xi,-Py \succ_A Px+P(y\succ_A Px+Py \succ_A x+\lambda y \succ_A x)  \rangle\\
&=&0;\\
&&\langle \huaL_A^{\ast}(Px) \big((-\lambda\Id-P^{\ast})\xi \big)-(-\lambda\Id-P^{\ast})
\Big(\huaL_A^{\ast}(Px)\xi+ \huaL_A^{\ast}(x)\big((-\lambda\Id-P^{\ast})\xi\big) +\lambda \huaL_A^{\ast}(x) \xi \Big),y \rangle\\
&=& \langle \xi,Px \ast Py-P(x\ast Py+Px \ast y+\lambda x \ast y)  \rangle\\
&=&0.
\end{eqnarray*}
Therefore, $(A^{\ast},-\lambda\Id-P^{\ast},\huaR_A^{\ast},-L_{\prec_A}^{\ast},-R_{\succ_ A}^{\ast},\huaL_A^{\ast})$ is a representation.

Let $(A,\prec_A,\succ_A,P,\omega)$ be a quadratic Rota-Baxter dendriform algebra of weight $\lambda$. By \eqref{invariant3} and the fact that $\omega$ is skew-symmetric, we have
\begin{equation*}
\left\{ \begin{aligned}
\omega^{\sharp}\circ L_{\succ_A}(x)&=\huaR_A^{\ast}(x) \circ \omega^{\sharp};\quad
&\omega^{\sharp}\circ R_{\succ_A}(x)=-L_{\prec_A}^{\ast}(x) \circ \omega^{\sharp};\\
\omega^{\sharp}\circ R_{\prec_A}(x)&=\huaL_A^{\ast}(x) \circ \omega^{\sharp};\quad
&\omega^{\sharp}\circ L_{\prec_A}(x)=-R_{\succ_A}^{\ast}(x) \circ \omega^{\sharp}.
\end{aligned}\right.
\end{equation*}
Moreover, by \eqref{compat condition}, we also have
$$ \omega^{\sharp}\circ P=(-\lambda\Id-P^{\ast}) \circ \omega^{\sharp}. $$
Note that $ \omega^{\sharp}$ is a linear isomorphism. Therefore, $ \omega^{\sharp}:A\longrightarrow A^*$ is an isomomorphism from the regular representation $(A,P,L_{\succ_A},R_{\succ_A},L_{\prec_A},R_{\prec_ A})$ to the coregular representation $(A^{\ast},-\lambda\Id-P^{\ast},\huaR_A^{\ast},-L_{\prec_ A}^{\ast},-R_{\succ_A}^{\ast},\huaL_A^{\ast})$.
\end{proof}

\emptycomment{
\section{Rota-Baxter dendriform D-bialgebras}\label{bialgs}

\begin{defi}
A {\bf Rota-Baxter operator of weight $\lambda$ on a dendriform D-bialgebra $(A,A^{\ast})$} is a linear map $P: A \longrightarrow A$ such that
\begin{itemize}
\item[{\rm(i)}] $P$ is a Rota-Baxter operator of weight $\lambda$ on $A$;
\item[{\rm(ii)}] $\widetilde{P^{\ast}}:= -\lambda\Id -P^{\ast}$ is a Rota-Baxter operator of weight $\lambda$ on $A^{\ast}$.
\end{itemize}
A dendriform D-bialgebra with a Rota-Baxter operator of weight $\lambda$ is called a {\bf Rota-Baxter dendriform D-bialgebra of weight $\lambda$}.

We denote a Rota-Baxter dendriform D-bialgebra of weight $\lambda$ by $(A,A^{\ast},P)$.
\end{defi}

\begin{pro}
If $P$ is a Rota-Baxter operator of weight $\lambda$ on the dendriform D-bialgebra $(A,A^{\ast})$. Then
$\widetilde{P^{\ast}}$ is a Rota-Baxter operator of weight $\lambda$ on its dual dendriform D-bialgebra $(A^{\ast},A)$.
\end{pro}
\begin{proof}
	It follows by a direct calculation.
	\end{proof}

\begin{pro}\label{double}
Let $(A,A^{\ast},P)$ be a Rota-Baxter dendriform D-bialgebra of weight $\lambda$. Then $(\frkd,\frkd_r^{\ast},\huaP)$ is a Rota-Baxter dendriform D-bialgebra of weight $\lambda$, where $(\frkd,\frkd_r^{\ast})$ is the dendriform D-bialgebra given by Theorem \ref{dendriform double}, and $\huaP:A \oplus A^{\ast} \longrightarrow A\oplus A^{\ast} $ is the linear map defined by
\begin{eqnarray}\label{huaP}
\huaP(x+\xi)=Px-\lambda\xi-P^{\ast}\xi,\quad \forall x\in A,~~~\xi\in A^{\ast}.
\end{eqnarray}
\end{pro}

\begin{proof}
It is straightforward to check that the linear map $\huaP$ is a Rota-Baxter operator of weight $\lambda$ on the dendriform double algebra $\frkd =A\bowtie A^{\ast}$.

Since $\widetilde{P^{\ast}}=-\lambda\Id-P^{\ast} $ is a Rota-Baxter operator of weight $\lambda$ on the dendriform algebra $A^{\ast}$, it is straightforward to check that
$-\lambda\Id-\widetilde{\huaP^{\ast}}:\frkd_r^{\ast} \longrightarrow \frkd_r^{\ast}$, via $ \xi+x \mapsto -\lambda\xi-P^{\ast}\xi+Px $, is a Rota-Baxter operator of weight $\lambda$ on the dual dendriform algebra $\frkd_r^{\ast}$ by \eqref{frkdast1} and \eqref{frkdast2}. Consequently, $(\frkd,\frkd_r^{\ast},\huaP)$ is a Rota-Baxter
dendriform D-bialgebra of weight $\lambda$.
\end{proof}

\begin{ex}{\rm
Let $(A,A^{\ast})$ be a dendriform D-bialgebra. Then $(A,A^{\ast},P)$ is a Rota-Baxter dendriform D-bialgebra of  weight $\lambda$, where the linear map $P:A \longrightarrow A$ is defined by $P(x)=-\lambda x$.
}
\end{ex}

\begin{ex}{\rm
Consider the dendriform D-bialgebra $(\frkd,\frkd_r^{\ast})$ given in Theorem \ref{dendriform double}, where
$\frkd =A\bowtie A^{\ast}$. Then the linear map
$$ P:A \bowtie A^{\ast} \longmapsto A \bowtie A^{\ast}, \quad x+\xi \mapsto -\lambda x,$$
is a Rota-Baxter operator of weight $\lambda$ on the dendriform algebra $\frkd$. Furthermore, we have
 $$ (-\lambda\Id-P^{\ast})(\xi+x)=-\lambda x,$$
which implies that $-\lambda\Id-P^{\ast}$ is also a  Rota-Baxter operator of weight $\lambda$ on the dual dendriform algebra $\frkd_r^{\ast}$. Therefore, $(\frkd,\frkd_r^{\ast},P)$ is a Rota-Baxter dendriform D-bialgebra of  weight $\lambda$.
}
\end{ex}

Factorizable dendriform D-bialgebras can give rise to Rota-Baxter dendriform D-bialgebras.
\begin{thm}\label{thm:facden-RBD}
Let $(A,A_{r}^{\ast})$ be a factorizable dendriform D-bialgebra with $I=r_{+}-r_{-}$. Then $(A,A_{r}^{\ast},P)$ is a Rota-Baxter dendriform D-bialgebra of weight $\lambda$, where $P$ is given by \eqref{P2}.
\end{thm}

\begin{proof}
It is obvious that $\widetilde{P^{\ast}}=-\lambda\Id-P^{\ast}=\lambda I^{-1}\circ r_{-}.$ Moreover, by the facts that $r_{-}:A_{r}^{\ast}\longrightarrow A$ is a dendriform algebra homomorphism and
$\lambda I^{-1}:(A,\prec_P,\succ_P)\longrightarrow(A_{r}^{\ast},\prec_r,\succ_r)$ is a dendriform algebra isomorphism, we have
\begin{eqnarray*}
&&\widetilde{P^{\ast}}\Big(\widetilde{P^{\ast}}\xi \prec_{r} \eta+\xi \prec_{r} \widetilde{P^{\ast}}\eta +\lambda \xi \prec_{r} \eta \Big)\\
&=& \lambda I^{-1} r_{-}\Big((\lambda I^{-1} r_{-}\xi)\prec_{r} \eta+\xi \prec_{r}(\lambda I^{-1} r_{-}\eta)+\lambda \xi \prec_{r} \eta\Big)\\
&=& \lambda I^{-1} \Big( (\lambda r_{-} I^{-1} r_{-}\xi)\prec_A r_{-}\eta + r_{-}\xi  \prec_A                      (\lambda r_{-} I^{-1} r_{-}\eta) +  \lambda r_{-}\xi  \prec_A r_{-}\eta   \Big)\\
&=& \lambda I^{-1} ( r_{-}\xi  \prec_{P} r_{-}\eta)\\
&=& (\lambda I^{-1}r_{-}\xi) \prec_{r} (\lambda I^{-1}r_{-}\eta)\\
&=&  (\widetilde{P^{\ast}}\xi) \prec_{r} (\widetilde{P^{\ast}}\eta).
\end{eqnarray*}
Similarly, we can also have
$$\widetilde{P^{\ast}}\Big(\widetilde{P^{\ast}}\xi \succ_{r} \eta+\xi \succ_{r} \widetilde{P^{\ast}}\eta +\lambda \xi \succ_{r} \eta \Big)=(\widetilde{P^{\ast}}\xi) \succ_{r} (\widetilde{P^{\ast}}\eta),$$
which implies that $\widetilde{P^{\ast}}$ is a Rota-Baxter operator of weight $\lambda$ on $A_{r}^{\ast}$. Therefore, $(A,A_{r}^{\ast},P)$ is a Rota-Baxter dendriform D-bialgebra of weight $\lambda$.
\end{proof}

On the other hand, there is the following construction of factorizable dendriform D-bialgebras from Rota-Baxter dendriform D-bialgebras, supplying Theorem \ref{thm:facden-RBD} from the converse side in certain sense.
\begin{cor}\label{cor:RBD-facden}
Let $(A,A^*,P)$ be a Rota-Baxter dendriform D-bialgebra of weight $\lambda~(\lambda\neq 0)$. Then there is a factorizable dendriform D-bialgebra $(\frkd,\frkd_\frkr^*)$, where $\frkd=A \bowtie A^*$ and $\frkr\in \otimes^2 \frkd$ is determined by
\begin{eqnarray}\label{frkr}
\frkr_{+} (\xi,x)=\frac{1}{\lambda}(\huaP+\lambda\Id)(-x,\xi), \quad \forall x\in A, \xi\in A^*,
\end{eqnarray}
where $\huaP$ is given by \eqref{huaP}.
\end{cor}

\begin{proof}
It is straightforward to check that $(\frkd=A \bowtie A^*,\huaP,\omega)$ is a quadratic Rota-Baxter dendriform algebra of weight $\lambda$, where $\huaP$ is given by \eqref{huaP} and $\omega$ is given by
\begin{eqnarray}\label{omega}
\omega(x+\xi,y+\eta)=\xi(y)-\eta(x), \quad \forall x\in A, \xi\in A^*.
\end{eqnarray}
Then by Theorem \ref{Factorizable dendriform algebra}, we obtain a factorizable dendriform D-bialgebra $(\frkd,\frkd_\frkr^*)$, where $\frkr\in \otimes^2 \frkd$ is determined by
\begin{eqnarray*}
\frkr_{+}=\frac{1}{\lambda} (\huaP+\lambda\Id)\circ \huaJ_{\omega}:\frkd_r^* \longrightarrow \frkd.
\end{eqnarray*}
It is obvious that $\huaJ_{\omega}(\xi,x)=(-x,\xi)$.
\end{proof}

Let $(A,A_r^*)$ be a factorizable dendriform D-bialgebra. By Theorem \ref{thm:facden-RBD}, $(A,A_r^*,P)$ is a Rota-Baxter dendriform D-bialgebra, where $P=\lambda r_-\circ I^{-1}$. By Corollary \ref{cor:RBD-facden}, we obtain a factorizable dendriform D-bialgebra $(\frkd,\frkd_{\frkr}^*)$, where $\frkd=A \bowtie A_r^*$ and $\frkr \in \otimes^2 \frkd$ is determined by \eqref{frkr}. Straightforward computations show that
$$ \frkr_{+} (\xi,x)=(r_+\circ I^{-1} (x),I^{-1} \circ r_+(\xi)), \quad \forall x\in A,\xi \in A^*. $$
On the other hand, any dendriform D-bialgebra gives rise to a factorizable dendriform D-bialgebra according to Theorem \ref{dendriform double}. In particular, the factorizable dendriform D-bialgebra $(A,A_r^*)$ gives the factorizable dendriform D-bialgebra $(\frkd,\frkd_{{\bar{r}}}^*)$, where $\bar{r}=\sum_{i} e_i \otimes e_i^*.$ Obviously, $\frkr$ and $\bar{r}$ are different, the former is determined by the original $r$, while the latter is not related to $r$.

Therefore, a factorizable dendriform D-bialgebra $(A,A_r^*)$ gives rise to two different factorizable dendriform D-bialgebras via the above two different approaches.
}

\end{document}